%\pdfoutput=1
\documentclass[12pt,a4paper,reqno]{amsart}
\usepackage{amssymb}
\usepackage{verbatim}
\usepackage{amscd}
\usepackage{enumerate}
\usepackage{graphicx}
\usepackage{siunitx}
\usepackage{color}
%\usetikzlibrary{arrows}
% uncomment this when editing cross-references
\numberwithin{equation}{section}

\usepackage{mathabx}

\usepackage{mathtools}%                  http://www.ctan.org/pkg/mathtools
\usepackage[tableposition=top]{caption}% http://www.ctan.org/pkg/caption
\usepackage{booktabs,dcolumn}%           http://www.ctan.org/pkg/dcolumn + http://www.ctan.org/pkg/booktabs

% Lighter notation.

% Operators; uses ``mathtools''.

%\DeclareMathOperator*\rad{rad}

%\newcommand{\rad}[1]{{#1}^{\flat}}

%\DeclareMathOperator*\Kl{Kl} (commented because yield bad display for  \Kl_q %replaced with \newcommand... )
%\DeclareMathOperator*\FT{FT} (commented because yield bad display for \FT_q
%replaced with \newcommand...)

% Setup for ``caption''.
\DeclareCaptionLabelSeparator{separation}{:\quad}
\captionsetup{
  font=small,
  labelfont=sc,
  labelsep=separation,
  width=0.8\textwidth
}

\DeclareFontFamily{OT1}{rsfs}{}
\DeclareFontShape{OT1}{rsfs}{n}{it}{<-> rsfs10}{}
\DeclareMathAlphabet{\mathscr}{OT1}{rsfs}{n}{it}

\addtolength{\textwidth}{3 truecm}
\addtolength{\textheight}{1 truecm}
\setlength{\voffset}{-.6 truecm}
\setlength{\hoffset}{-1.3 truecm}
     
\theoremstyle{plain}

\newtheorem{theorem}{Theorem}[section]
\newtheorem{proposition}[theorem]{Proposition}
\newtheorem{hypothesis}[theorem]{Hypothesis}
\newtheorem{lemma}[theorem]{Lemma}
\newtheorem{corollary}[theorem]{Corollary}
\newtheorem{conjecture}[theorem]{Conjecture}

\theoremstyle{definition}

\newtheorem{definition}[theorem]{Definition}
\newtheorem{remark}[theorem]{Remark}

\newtheorem{example}[theorem]{Example}

\newcommand\E{\mathbb{E}}
\newcommand\R{\mathbb{R}}
\newcommand\Z{\mathbb{Z}}
\newcommand\N{\mathbb{N}}

\newcommand\eps{\varepsilon}

%%EK

%%
%\newcommand\DHL{\operatorname{DHL}}
%\newcommand\EH{\operatorname{EH}}
%\newcommand\MPZ{\operatorname{MPZ}}

\renewcommand\P{\mathbb{P}}

%\newcommand\cond{\mathrm{cond}}
%\newcommand\Gal{\mathrm{Gal}}
%\newcommand\Kl{\mathrm{Kl}}
%\newcommand\swan{\mathrm{swan}}
%\newcommand\FT{\mathrm{FT}}

%\newcommand\E{\mathbb{E}}

%%%%%%%%%%%%%% EK %%%%%%%%%%%%%%

%% This is more standard (for analytic number theorists) 
%% than \sim for A << B << A

%% I find this typographically better than \lessapprox
%% This symbol is from the mathabx package.
\renewcommand{\phi}{\varphi}
%% I think this form of phi is more standard for the Euler function
%% I think it is only used in improvements.tex for something else

%% This is for characteristic functions
%%%%%%%%%%%%%%%%%%%%%%%%%%%%%%%%

%% Try normal paragraph indent
%%\parindent 0mm
%% Try to see what happens without enlarging the paragraph skip
%%\parskip   5mm 

%PhM use this for navigating the pdf when editing with texpad (comment if you prefer)
% (Ah, figured out why my LaTeX was not liking hyperref - there were old aux files that didn't use hyperref. -TT)
%\usepackage{hyperref}

% for optimize.tex
%%\usepackage{algorithm, algorithmic}

%%%%%%%%%%%%%%%%%%%%%%%%%%%%%%%%%%%%%%%%%%%%%%%%%%%%%%%%%%%%%%%%%%%%%%%
\begin{document}

\title{Sign patterns of the Liouville and M\"obius functions}

\author{Kaisa Matom\"aki}
\address{Department of Mathematics and Statistics \\
University of Turku, 20014 Turku\\
Finland}
\email{ksmato@utu.fi}

\author{Maksym Radziwi{\l}{\l}}
\address{Department of Mathematics \\
 Rutgers University, Hill Center for the Mathematical
Sciences \\
110 Frelinghuysen Rd., Piscataway, NJ 08854-8019}
\email{maksym.radziwill@gmail.com}

\author{Terence Tao}
\address{Department of Mathematics, UCLA\\
405 Hilgard Ave\\
Los Angeles CA 90095\\
USA}
\email{tao@math.ucla.edu}

\begin{abstract}  Let $\lambda$ and $\mu$ denote the Liouville and M\"obius functions respectively.  Hildebrand showed that all eight possible sign patterns for $(\lambda(n), \lambda(n+1), \lambda(n+2))$ occur infinitely often.
By using the recent result of the first two authors on mean values of multiplicative functions in short intervals, we strengthen Hildebrand's result by proving that each of these eight sign patterns occur with positive lower natural density.  We also obtain an analogous result for the nine possible sign patterns for $(\mu(n), \mu(n+1))$.  A new feature in the latter argument is the need to demonstrate that a certain random graph is almost surely connected.
\end{abstract}

\maketitle

%%%%%%%%%%%%%%%%%%%%%%%%%

\section{Introduction}

In this paper we strengthen some results on the sign patterns of the Liouville function $\lambda$, as well as obtain new results on the M\"obius function $\mu$.

We begin with the Liouville function.  It will be convenient (particularly in the combinatorial arguments used to prove our main theorems) to introduce the following notation.

\begin{definition}[Liouville sign pattern]  Let $k,l$ be non-negative integers, and let $n$ be an integer.  Let $\epsilon_{-k} \dots \epsilon_l$ be a string of $k+l+1$ symbols from the alphabet $\{+,-,\ast\}$.  We write
\begin{equation}\label{sign}
 n \mapsto \epsilon_{-k} \dots \stackrel{\vee}\epsilon_0 \dots \epsilon_l
\end{equation}
if $n > k$, $\lambda(n+i)=+1$ for all $-k \leq i \leq l$ with $\epsilon_i=+$, and $\lambda(n+i)=-1$ for all $-k \leq i \leq l$ with $\epsilon_i=-$.  We write the negation of \eqref{sign} as
$$
 n \not \mapsto \epsilon_{-k} \dots \stackrel{\vee}\epsilon_0 \dots \epsilon_l
$$
\end{definition}

The symbol $\vee$ is only present in the above notation as a positional marker (analogous to a decimal point in decimal notation) and has no further significance.

\begin{example}  The claim
$$ n \mapsto {-}{\ast}{\stackrel{\vee}{+}}{+}$$
is equivalent to the assertion that $n>2$, $\lambda(n-2)=-1$, $\lambda(n)=+1$, and $\lambda(n+1)=+1$, but makes no claim about the value of $\lambda(n-1)$.
\end{example}

In this notation, a well-known conjecture of Chowla \cite{chowla} can now be phrased as follows:

\begin{conjecture}[Chowla]\label{chow}  For any $k \geq 1$ and signs
 $\epsilon_1, \dots, \epsilon_k \in \{-1,+1\}$, the set of natural numbers $n$ for which
$$ n \mapsto \stackrel{\vee}\epsilon_1 \dots \epsilon_k $$
has natural density $\frac{1}{2^k}$ (that is, the density of this set in $[1,x]$ converges to $1/2^k$ in the limit $x \to \infty$).
\end{conjecture}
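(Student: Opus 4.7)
Expanding $\prod_{i=1}^{k}\frac{1+\epsilon_i\lambda(n+i)}{2}$ and summing in $n\leq x$, the conjecture reduces to establishing the Chowla correlation estimates
\[
\sum_{n\leq x}\lambda(n+h_1)\cdots\lambda(n+h_j) = o(x) \qquad (j\geq 1,\ 0\leq h_1<\cdots<h_j),
\]
since the constant $2^{-k}$ in the expansion contributes the expected main term $x/2^k + o(x)$. The $j=1$ case is (equivalent to) the prime number theorem; all the difficulty is in the cases $j\geq 2$.

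For $j\geq 2$ I would follow the three-step Matomäki--Radziwiłł--Tao strategy. \emph{Step one} (Ramaré-type identity): using the fact that almost all $n$ have a prime factor in a dyadic interval $[P/2,P]$ with $P=x^{o(1)}$, write
\[
\lambda(n+h_1)\approx \frac{1}{\log P}\sum_{P/2<p\leq P}(\log p)\,\lambda((n+h_1)/p)\,\mathbf{1}_{p\mid n+h_1},
\]
substitute into the $j$-point correlation, and swap the $n$- and $p$-summations, rewriting the problem as an average over $p$ of correlations on dilated short windows. \emph{Step two} (short interval averages): apply the Matomäki--Radziwiłł theorem, together with its extension to products of shifted Liouville factors, to show that for almost every $p\in[P/2,P]$ the inner $j$-point correlation over each window $[y,y(1+1/\log P)]$ agrees with its long-range average up to $o(1)$; this yields approximate dilation invariance $n\mapsto pn$ of the target correlation. \emph{Step three} (entropy decrement): run an entropy decrement argument to convert dilation invariance into enough translation invariance to decouple one of the shifts, reducing the $j$-point correlation to a lower-order one and closing an induction on $j$.

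The decisive obstacle is the passage from \emph{logarithmic} to \emph{natural} averaging. Every known realization of the scheme above controls only the logarithmically weighted sum $\sum_{n\leq x}\lambda(n+h_1)\cdots\lambda(n+h_j)/n$, which delivers logarithmic density $1/2^k$ but not the natural density asserted by Chowla. Stripping the $1/n$ weight would require a version of the Matomäki--Radziwiłł theorem that holds on \emph{every} short interval $[x_0,x_0+x^{o(1)}]$, not merely on almost every one (with respect to the logarithmic measure on $x_0$); no current technique supplies such a uniform pointwise estimate, and I expect furnishing (or circumventing) one to be where the bulk of the work lies. I would therefore regard the plan in two stages: first establish the logarithmic analogue of the conjecture for all $j$ (itself still open in general, being known only for $j=2$ and for odd $j$), and then tackle the weight-removal problem, which is the genuinely new barrier separating the results of this paper from the full conjecture stated above.
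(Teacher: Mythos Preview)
The statement you were asked to prove is Chowla's \emph{conjecture}, and the paper does not prove it; indeed, immediately after stating it the paper remarks that ``for $k>1$ Chowla's conjecture remains open.'' So there is no proof in the paper to compare your proposal against.

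Your write-up is not a proof either, and you seem to be aware of this: you correctly identify that the entropy-decrement/Matom\"aki--Radziwi\l\l--Tao machinery (developed largely \emph{after} this paper) yields only logarithmic averaging, and that removing the $1/n$ weight to obtain natural density is a genuine open barrier. Your summary of the state of the art is broadly accurate---the logarithmic two-point case is the result referenced in the paper as \cite{Terry}, and the odd-order logarithmic case came later---but none of this constitutes a proof of the conjecture as stated. If the intent was to sketch a plausible line of attack and flag where it breaks down, that is a reasonable thing to have written; just be clear that the final status is ``open,'' matching the paper.
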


For $k=1$, this claim is equivalent to the prime number theorem, but for $k>1$ Chowla's conjecture remains open.  For $k \leq 3$, we have the following partial result of Hildebrand~\cite{hil}:

\begin{theorem}[Hildebrand]  For any $k=1,2,3$ and signs $\epsilon_1, \dots, \epsilon_k \in \{-,+\}$, the claim
$$ n \mapsto \stackrel{\vee}\epsilon_1 \dots \epsilon_k $$
occurs for infinitely many $n$.
\end{theorem}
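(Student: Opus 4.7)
The plan is to proceed by contradiction and induct on $k$: assume some sign pattern $\epsilon_1\dots\epsilon_k$ of length $k \leq 3$ occurs only finitely often, and derive a contradiction from the complete multiplicativity of $\lambda$ (especially the identity $\lambda(2n) = -\lambda(n)$) together with the prime number theorem in the form $\sum_{n \leq x} \lambda(n) = o(x)$.

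For $k=1$, the claim is immediate: if $\lambda(n)=+1$ held for only finitely many $n$, the partial sums would satisfy $\sum_{n \leq x}\lambda(n) \sim -x$, contradicting PNT, and symmetrically for $\lambda(n)=-1$. For $k=2$, I would dispatch the patterns $(+-)$ and $(-+)$ as follows: if one of these is forbidden for $n > N_0$, then once $\lambda(n)$ takes a fixed value, it must continue to take that value for all larger $n$, so $\lambda$ is eventually constant, again violating PNT. For the patterns $(++)$ or $(--)$, a short combinatorial argument combined with the known densities $1/2$ for the two values of $\lambda$ (from PNT) shows that the sequence $\lambda(n)$ is eventually alternating; but then $\lambda(2n)=-\lambda(n)$ together with the period-$2$ relation forces $\lambda(n)=\lambda(n+1)$ on an infinite set, contradicting the alternation.

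The case $k=3$, which is the main content of Hildebrand's theorem and the chief obstacle in this plan, requires a genuinely combinatorial argument. Suppose a length-$3$ pattern $\epsilon_1\epsilon_2\epsilon_3$ is avoided for all $n > N_0$. The idea is to combine the identity $\lambda(2n+2i) = -\lambda(n+i)$ with the analogous relation for multiplication by $3$ in order to transfer constraints on length-$3$ patterns at scale $n$ into constraints on patterns at scales $2n$ and $3n$; one gets, for instance, that the values of $\lambda$ on $\{2n, 2n+1, 2n+2, 2n+3, 2n+4\}$ are partially determined by those on $\{n, n+1, n+2\}$, producing a cascade of new forbidden configurations. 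A careful bookkeeping, splitting on the residue of $n$ modulo a small modulus such as $6$, should ultimately force either a forbidden length-$2$ pattern (contradicting the $k=2$ case) or a density statement that violates PNT.

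The delicate step is to handle all eight patterns in a uniform framework. The ``mixed'' patterns like $(++-)$ or $(-+-)$ containing a $(+-)$ or $(-+)$ substring at a specified location are the easiest, because the multiplicative transport quickly produces a forbidden length-$2$ sub-pattern. The hardest cases are expected to be the monochromatic patterns $(+++), (---)$ and the alternating patterns $(+-+), (-+-)$: here the multiplicative symmetries of $\lambda$ preserve the pattern class under multiplication by $2$, so one must combine several different multiplicativity relations (and possibly use PNT on arithmetic progressions $\mod 2, 3$) to generate the contradiction. This is the main place where I expect the proof to require new ideas beyond the base cases.
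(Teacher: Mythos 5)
Your $k=1$ and $k=2$ cases are broadly right, although for $(++)$ and $(--)$ the hypothesis plus PNT yields $\lambda(n)=-\lambda(n+1)$ for all large $n$ (not merely ``eventually alternating'' --- one must also rule out long $-$ runs), after which $\lambda(2n)=-\lambda(n)$ closes it. A cleaner route, used in the paper's Theorem~\ref{pap}, is the pigeonhole fact that for every $n$ at least one of $\lambda(2n)=\lambda(2n+1)$, $\lambda(2n+1)=\lambda(2n+2)$, $\lambda(2n)=\lambda(2n+2)=\lambda(n)=\lambda(n+1)$ holds.

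The $k=3$ case is the heart of Hildebrand's theorem and your proposal contains a genuine gap there, which you acknowledge but do not close. Two concrete problems. First, the transport you envisage (via $\times 2$ and $\times 3$, splitting mod $6$) is substantially short of what is needed: Hildebrand's argument, and the paper's rewriting of it in Section~3, indispensably uses multiplicativity at the prime $5$ as well, and the case analysis runs over moduli $15$, $30$, and $60$. The starting wedge $15n\not\mapsto {+}{\stackrel{\vee}{\ast}}{+}$ (Proposition~\ref{15n}) cannot be reached from $\times 2$, $\times 3$ and a split mod $6$ alone. Second, and more structurally, the proposed endgame ``a density statement that violates PNT'' misidentifies the obstruction for the monochromatic pattern: under the hypothesis that $(+++)$ is avoided, the transport does not contradict PNT directly. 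What it produces (Corollaries~\ref{312} and~\ref{312b}) is the rigid conclusion that $\lambda\chi_3$ is eventually constant on the integers coprime to $3$, i.e.\ $\lambda(m)=\pm\chi_3(m)$ for all large $m$ with $3\nmid m$. That conclusion is entirely compatible with PNT; in Hildebrand's setting it is killed by Dirichlet's theorem on primes in arithmetic progressions (it forces every large prime into one residue class mod $3$), and for the paper's stronger positive-density result it is killed instead by the short-interval bound of Theorem~\ref{mrt-3}. The derivation of that $\chi_3$-structure is exactly the non-trivial combinatorics your sketch elides. Note also that the paper cites Hildebrand for this statement rather than reproving it; Sections~3--4 carry out the strengthened version and give the template you would need to fill in.
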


Hildebrand's method was elementary, relying on an \emph{ad hoc} combinatorial analysis that relied primarily on the multiplicative properties of $\lambda$ at the small primes $2,3,5$.  In this paper, we combine the methods of Hildebrand with a recent result of the first two authors \cite{mr} to improve this result as follows.

\begin{definition}  A property $P(n)$ of an integer $n$ is said to hold with \emph{positive lower natural density} if
$$ \liminf_{x \to \infty} \frac{1}{x} \sum_{n \leq x: P(n)} 1> 0.$$
\end{definition}

\begin{theorem}[Liouville patterns of length three]\label{main}  For any $k=1,2,3$ and signs $\epsilon_1, \dots, \epsilon_k \in \{-,+\}$, the claim
$$ n \mapsto \stackrel{\vee}\epsilon_1 \dots \epsilon_k $$
occurs with positive lower natural density.
\end{theorem}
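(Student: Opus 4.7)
The plan is to combine Hildebrand's combinatorial identities among sign patterns, which exploit the complete multiplicativity of $\lambda$ at the small primes $2,3,5$, with the Matom\"aki--Radziwi{\l}{\l} (MR) theorem on short averages of multiplicative functions, thereby upgrading Hildebrand's ``infinitely often'' to ``positive lower density.'' The case $k=1$ is immediate from the prime number theorem, so the substantive work lies in $k=3$ (with $k=2$ a simpler version of the same argument). I would fix a slowly growing $H=H(X)\to\infty$ and, for each pattern $\epsilon=\epsilon_1\dots\epsilon_k$, introduce the local count
$$N_\epsilon(x;H):=\#\{n\in[x,x+H]:n\mapsto\stackrel{\vee}\epsilon_1\dots\epsilon_k\}.$$
It suffices to show that for some fixed large $H$ and some $c>0$, one has $N_\epsilon(x;H)\ge cH$ for $x$ in a positive-density subset of $[X,2X]$, uniformly in large $X$; averaging over $x$ then yields the theorem.

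The first step is to feed in MR. For any $\eta>0$ and all sufficiently large $H$, MR gives $|H^{-1}\sum_{x\le n<x+H}\lambda(n)|<\eta$ for all but at most $\eta X$ values of $x\in[X,2X]$, and analogous bounds hold for the short averages of $\lambda\chi$ for any non-principal Dirichlet character $\chi$ of small conductor. Splitting $[x,x+H]$ into residues modulo $30$, this forces, in a ``good'' $x$, the proportion of $n\in[x,x+H]$ with $\lambda(n)=+1$ (respectively $-1$) in any fixed residue class modulo $30$ to equal $\tfrac{1}{60}+O(\eta)$. The second step is to apply Hildebrand's identities quantitatively: the relation $\lambda(2m)=-\lambda(m)$ yields
$$(\lambda(2m),\lambda(2m+1),\lambda(2m+2))=(-\lambda(m),\ \lambda(2m+1),\ -\lambda(m+1)),$$
so that a length-three pattern at an even $n=2m$ is determined by the length-two pattern at $m$ together with the ``independent'' middle sign $\lambda(2m+1)$; analogous identities hold modulo $3$ and $5$. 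Chasing these identities in a density-preserving way, one obtains a system of linear inequalities relating the eight length-three short-interval densities to the four length-two and two length-one densities; together with the MR-enforced $\tfrac12+o(1)$ balance of the length-one marginals, this system should force every length-three density to be bounded below by a positive constant on a positive-density set of~$x$.

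The principal obstacle will be translating Hildebrand's ``existence'' implications into honest density inequalities that survive the rescalings $m\mapsto 2m,3m,5m$ used in those identities. At the existence level one simply composes implications; at the density level one must verify that the MR estimates remain available simultaneously at the scales $X,X/2,X/3,X/5$ with a common window $H$, which is fine because MR is uniform across bounded rescalings provided $H$ is a sufficiently slowly growing function of $X$. A more subtle point is ruling out degenerate solutions of the resulting linear system: one has to check that the constraints obtained from the three primes $2,3,5$ together generate enough independent linear relations to pin every one of the eight target densities strictly away from zero, rather than leaving a hypothetical ``missing'' pattern free to propagate through a closed loop of implications. This is essentially the combinatorial heart of Hildebrand's original case analysis, and the contribution of the present approach is to make that analysis stable against the $O(\eta)$ errors introduced by MR.
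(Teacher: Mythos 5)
The inputs you identify are the right ones: the paper does combine Hildebrand's multiplicativity identities at $2,3,5$ with the Matom\"aki--Radziwi{\l}{\l} short-interval estimate, and crucially also its twist by $\chi_3$ (your remark about $\lambda\chi$ for small-conductor $\chi$ is exactly the ingredient needed; the paper states this as a separate theorem for real $\chi$, and only $\chi_3$ is used). However, the combinatorial core you propose --- forming local counts $N_\epsilon(x;H)$ and extracting a ``system of linear inequalities relating the eight length-three short-interval densities to the four length-two and two length-one densities'' --- is not what the paper does and, as stated, has a genuine gap. The obstruction is exactly the scenario you could not rule out by such a finite system: $\lambda(n)$ could \emph{a priori} behave like $f(n)\chi_3(n)$ with $f$ changing sign very rarely. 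This satisfies every short-interval constraint you get from untwisted MR and the mod-$30$ equidistribution you propose, and it makes $+++$ and $---$ essentially vanish. A static linear system among length-$\le 3$ densities cannot detect this, because the scenario is self-consistent at every fixed length.

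What the paper actually does is qualitatively different and dynamic rather than linear-algebraic. For $+++$ it assumes the pattern has zero density and runs a long chain of a.a.s.\ implications (involving auxiliary patterns at $6n$, $9n$, $15n$, $30n$, $60n$ of length up to about $17$, and an iteration in a partially defined map $n\mapsto f(n)$ to kill $3n\mapsto\stackrel{\vee}{\ast}{+}{+}$) to conclude that $\lambda\chi_3$ is a.a.s.\ constant on $\{3n-1,3n+1,3n+2\}$, hence on arbitrarily long blocks, and only then invokes twisted MR for $\chi_3$ to get the contradiction. For $+-+$ it proves the implication ``$2m\mapsto\stackrel{\vee}{+}{+}\Rightarrow 3m\mapsto\stackrel{\vee}{+}{+}$'' and uses the multiplier $m\mapsto 3m/2$ to self-replicate and lengthen constant blocks, contradicting untwisted MR as the block length goes to infinity. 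Both arguments escalate to patterns of unbounded length; they do not close at length three. Separately, the paper wraps all density bookkeeping in a Banach-limit (``asymptotic probability'') formalism precisely because chaining implications through $\liminf$/$\limsup$ and through the rescalings $m\mapsto 2m,3m,5m$ does not behave additively; your sketch waves at this (``in a density-preserving way'') but it is a real technical issue you would need to confront. So: right ingredients, but the proposed linear system is too weak, and the missing idea is the iterative escalation to arbitrarily long patterns (plus a formalism in which those iterations are legitimate density statements).
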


As with Hildebrand's arguments, our arguments extend to other completely multiplicative functions $f$ taking values in $-1,+1$ than $\lambda$, provided that $f$ agrees with $\lambda$ at the primes $2,3,5$ and obeys a prime number theorem in arithmetic progressions for any modulus dividing $60$.  We leave the details of this generalisation to the interested reader.

As it turns out, the most difficult sign patterns to handle for Theorem \ref{main} are $+++$ and $---$.  The problem is that the Liouville function $\lambda(n)$ could potentially behave like $f(n) \chi_3(n)$ for almost all $n$ that are not multiples of $3$, where $\chi_3$ is the primitive Dirichlet character of conductor $3$ (thus $\chi_3(3n+1)=1$ and $\chi_3(3n+2)=-1$ for all $n$) and $f: \N \to \{-1,+1\}$ is a function that changes sign very rarely.  In such a case, the sign patterns $+++$ and $---$ will almost never occur.  Fortunately, the results in \cite{mr} preclude this scenario; the main difficulty is then to show that this is essentially the \emph{only} scenario that could eliminate the $+++$ or $---$ patterns almost completely.

\begin{remark}  Strictly speaking, our arguments do not yield an explicit bound on the lower natural density, because we rely on Banach limits to simplify the presentation of the argument.  However, we believe that one could extract an effective lower bound on the density if required by avoiding the use of Banach limits, and keeping track of all error terms without passing to an asymptotic limit.
\end{remark}

\begin{remark}  In \cite{be} it was shown that $(\lambda(n), \lambda(n+r), \lambda(n+2r), \lambda(n+3r))$ attains all sixteen sign patterns in $\{-1,+1\}^4$ infinitely often, if $n$ ranges over the natural numbers and $r$ ranges over a bounded set.  It is plausible that using arguments similar to the ones here, one can show that these sixteen sign patterns also occur for $n$ in a set of positive lower density.  Note also from recent results on linear equations in the Liouville function (see \cite[Proposition 9.1]{gt-primes}, together with the companion results in \cite{gt-mobius} and \cite{gtz}) that for any fixed $k \geq 1$, the tuple $(\lambda(n),\dots,\lambda(n+(k-1)r))$ is asymptotically equidistributed in $\{-1,+1\}^k$ if $n,r$ range uniformly over (say) $[x,2x]$ for some large $x$ going to infinity; see also \cite{FH} for a recent generalization of these sorts of results to other linear forms and to more general bounded multiplicative functions.  It may be, in light of the results in \cite{mr}, that one can obtain a similar equidistribution result with $r$ restricted to a much smaller range that grows arbitrarily slowly with $x$, but we do not pursue this matter here. 
\end{remark}

We now turn to the M\"obius function $\mu$.  This function takes values in $\{-1,0,+1\}$ rather than $\{-1,1\}$, and the presence of the additional $0$ significantly complicates the analysis.  Nevertheless, we can show an analogue for Theorem \ref{main} for $k=1,2$ only:

\begin{theorem}[M\"obius patterns of length two]\label{main-2}  For any $k=1,2$ and $\epsilon_1, \dots, \epsilon_k \in \{-1,0,+1\}$, the claim
$$ (\mu(n),\dots,\mu(n+k-1)) = (\epsilon_1,\dots,\epsilon_k)$$
occurs with positive lower natural density.
\end{theorem}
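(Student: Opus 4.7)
The plan is to split the nine sign patterns for $(\mu(n), \mu(n+1))$ into groups according to the number of zeros in the pattern, and treat them in increasing order of difficulty.

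For the pattern $(0,0)$, the arithmetic progression $\{n : n \equiv 0 \pmod{4},\ n \equiv -1 \pmod{9}\}$ has positive density, and each such $n$ satisfies $4 \mid n$ and $9 \mid n+1$, hence $\mu(n) = \mu(n+1) = 0$. For a pattern of the form $(\epsilon, 0)$ with $\epsilon \in \{-1, +1\}$, one restricts $n$ to the residue class $n \equiv 3 \pmod{4}$, which forces $4 \mid n+1$ and hence $\mu(n+1) = 0$; within this class, the prime number theorem for $\mu$ in arithmetic progressions, together with the positive density of squarefree integers in this residue class, shows that both signs $\mu(n) = +1$ and $\mu(n) = -1$ occur with positive density. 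The patterns $(0, \pm 1)$ are handled by a symmetric construction.

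The hard cases are the four patterns $(\epsilon_1, \epsilon_2) \in \{-1, +1\}^2$, which force both $n$ and $n+1$ to be squarefree, so that $\mu(n) = \lambda(n)$ and $\mu(n+1) = \lambda(n+1)$. The aim is to strengthen Theorem \ref{main} for $k=2$ by intersecting the Liouville sign pattern with the squarefreeness condition on both coordinates. My plan is to adapt the combinatorial and multiplicative scheme underlying the proof of Theorem \ref{main}: restrict $n$ to residue classes modulo $p^2$ for each small prime $p$ (excluding the two residues $0$ and $-1$ modulo $p^2$) so as to guarantee $p^2 \nmid n$ and $p^2 \nmid n+1$, then invoke the short-interval mean value result of Matom\"aki--Radziwi\l{}\l{} in this restricted setting to produce positive density of $n$ with the prescribed Liouville signs on both coordinates.

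The main obstacle, and the step I expect to occupy most of the proof, is that imposing squarefreeness at all small primes simultaneously enlarges the state space of admissible local configurations and interacts in subtle ways with the multiplicative perturbations used in the Liouville case. To handle this, I would model the admissible local profiles (residues modulo small prime powers, together with the prescribed Liouville signs at nearby shifts of $n$) as vertices of a random graph, with edges corresponding to multiplicative moves that preserve the target sign pattern while keeping both $n$ and $n+1$ squarefree. The crux is to show that this random graph is almost surely connected: once this is established, the flexibility exploited in the Liouville argument survives the squarefreeness restriction, and each of the four $(\pm 1, \pm 1)$ patterns for $(\mu(n), \mu(n+1))$ occurs with positive lower natural density.
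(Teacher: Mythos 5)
Your treatment of the patterns involving a zero is fine, and in fact more elementary than the paper's: the paper computes the exact asymptotic densities of $(0,0)$, $(\pm 1,0)$, $(0,\pm 1)$ by inclusion--exclusion and sieve, while you simply exhibit a positive-density arithmetic progression in each case and invoke the prime number theorem in progressions where a sign is prescribed. Both work.

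On the four patterns $(\pm 1,\pm 1)$, you have correctly identified the paper's key insight --- that the zeros of $\mu$ break up the consecutive chains used in the Liouville $k=2$ argument, and that one must prove almost-sure connectivity of a certain random graph to get paths around those zeros. However, there is a genuine gap between your sketch and an actual argument, on two points.

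First, your graph is the wrong object. You propose vertices to be ``admissible local profiles (residues modulo small prime powers, together with the prescribed Liouville signs at nearby shifts of $n$)'' and edges to be ``multiplicative moves that preserve the target sign pattern.'' Encoding the Liouville signs into the vertex data and trying to preserve the sign pattern along edges makes the graph depend on the very function $\lambda$ whose distribution you are trying to control, and there is no plausible route to an unconditional connectivity theorem for such an object. The paper's graph carries no sign information at all: fixing a random profinite integer $\mathbf{n}$, the vertices are the integers $a$ with $\mathbf{n}+a$ squarefree, and $a,b$ are joined precisely when $|a-b|$ is an odd prime dividing $\mathbf{n}+a$. Connectivity of this graph is a purely arithmetic/probabilistic statement, proved by a multi-scale second-moment argument plus a Vinogradov-type three-primes lemma, entirely independent of $\mu$ or $\lambda$.

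Second, you have not explained how connectivity finishes the proof, and the phrasing ``intersecting the Liouville sign pattern with the squarefreeness condition'' and ``produce positive density'' suggests a direct positive-density argument that does not exist. The paper argues by contradiction: if some pattern $(+1,\epsilon)$ is avoided a.a.s., then after twisting by a completely multiplicative $\chi$ with $\chi(2)=-\epsilon$ one gets the implication $\mu^2(n)=\mu^2(n+1)=1 \Rightarrow \mu\chi(n)=\mu\chi(n+1)$, and by multiplicativity the same for any shift $d$ with $d\mid n$ and both ends squarefree --- exactly matching the edge relation of the graph. Connectivity then forces $\mu\chi$ to be constant on all squarefree elements of a long window, so $\bigl|\sum_{j\le h}\mu\chi(n+j)\bigr|=\sum_{j\le h}\mu^2(n+j)\asymp h/\zeta(2)$, which contradicts the twisted short-interval cancellation Theorem~\ref{mrt-3}. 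Without the $\chi$ normalization and this contradiction structure, the connectivity theorem has no lever to act on. So the proposal identifies the right central difficulty but would need to be substantially re-engineered --- both the graph and the logical architecture --- before it could close.
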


The difficult case of this theorem occurs when $k=2$ and $\epsilon_1,\epsilon_2 \in \{+1,-1\}$.  Suppose for instance that we wanted to show that $(\mu(n),\mu(n+1)) = (+1,-1)$ occurred with positive lower natural density.  In the case of the Liouville function, one could show (as was done in \cite{mr}) that if the analogous claim $(\lambda(n),\lambda(n+1)) = (+1,-1)$ occurs with zero density, then there would often exist long chains $n,n+1,\dots,n+h$ of consecutive natural numbers on which $\lambda$ was constant, which is in contradiction with the results in \cite{mr}.  This argument no longer works in the case of the M\"obius function, due to the large number of zeroes of this function (for instance, the zeroes at the multiples of four).  To get around this, one needs to replace the chains $n,n+1,\dots,n+h$ by more complicated paths of nearby natural numbers, necessitating the analysis of the connectivity properties of a certain random graph, which will be done in Sections \ref{initial-sec}, \ref{conclude-sec}.  The further study of this random graph (or similar such graphs) may have some further applications; in particular, one may hope to use expansion properties of this graph to make progress towards the $k=2$ case of the Chowla conjecture (Conjecture \ref{chow}). In fact by pursuing this direction further, the third author has recently shown that a logarithmic form of Chowla's conjecture holds, and consequently that the logarithmic density of integers $n$ for which $(\mu(n), \mu(n+1)) = (\varepsilon_1, \varepsilon_2)$ exists (see \cite{Terry}).

\section{Asymptotic probability}\label{asym}

When dealing with natural densities, there is the minor technical difficulty that the lower and upper natural densities for a given set of integers need not match, leading to a breakdown of additivity of density in these situations.  To get around this problem we shall use the (artificial) device of Banach limits.

Let $P_0(n)$ be a property of the natural numbers that holds with zero lower natural density.  In particular, we can find a sequence $x_i$ of real numbers going to infinity such that
$$ \frac{1}{x_i} \sum_{n \leq x_i: P_0(n)} 1 \leq 2^{-i} $$
for all $i$.  This implies that
$$ \sum_{\frac{1}{i} x_i \leq n \leq x_i: P_0(n)} \frac{1}{n} \leq i 2^{-i}.$$
In particular
$$ \frac{1}{\log i} \sum_{\frac{1}{i} x_i \leq n \leq x_i: P_0(n)} \frac{1}{n} \to 0$$
as $i \to \infty$.  Henceforth we fix the sequence $x_i$ with this property.

Next, define a \emph{Banach limit} to be a linear functional $\operatorname{LIM}: \ell^\infty(\N) \to \R$ from bounded sequences $(a_i)_{i=1}^\infty$ of real numbers to the real numbers with the property that
$$ \liminf_{i \to \infty} a_i \leq \operatorname{LIM} (a_i)_{i=1}^\infty \leq \limsup_{i \to \infty} a_i$$
for all bounded sequences $(a_i)_{i=1}^\infty$; in particular, $\operatorname{LIM} (a_i)_{n=1}^\infty = \lim_{i \to\infty} a_i$ when $a_i$ is convergent.  As is well known, the existence of a Banach limit is guaranteed by the Hahn-Banach theorem, or by the existence of non-principal ultrafilters on the natural numbers.  Henceforth we will fix\footnote{Strictly speaking, this means that we are assuming the axiom of choice (or at least the ultrafilter lemma) in our arguments.  However, if desired, it is a routine matter to rewrite the arguments below in terms of limit superior and limit inferior instead of Banach limits and replacing additivity by subadditivity or superadditivity as appropriate, so that the axiom of choice is no longer required.  We leave the details of this modification of the argument to the interested reader.} a single Banach limit $\operatorname{LIM}$.

Given a property $P(n)$ of an integer $n$, we define the \emph{asymptotic probability} that $P(n)$ occurs to be the quantity
$$ \operatorname{LIM} \left( \frac{1}{\log i} \sum_{\frac{1}{i} x_i \leq n \leq x_i: P(n)} \frac{1}{n} \right)_{i=1}^\infty,$$
and say that $P(n)$ holds \emph{asymptotically almost surely} (or \emph{a.a.s.} for short) if its asymptotic probability is $1$.   Thus, for instance, with $\operatorname{LIM}$ and $x_i$ as above, the property $P_0(n)$ is asymptotically almost surely false.

Note that as $\operatorname{LIM}$ is linear, asymptotic probability obeys all the axioms of finitely additive probability.  For instance, if $P(n)$ and $Q(n)$ are properties with $Q$ holding asymptotically almost surely, then the asymptotic probability of $P(n)$ is equal to the asymptotic probability of $P(n) \wedge Q(n)$.

We will call a quantity \emph{fixed} if it does not depend on the parameter $i$.  Since $x_i \to \infty$, it is clear that for any fixed constant $C>0$, one has $n > C$ a.a.s..  (This does not contradict the finite nature of $n$, because our probability measure is only finitely additive, rather than countably additive.)

In view of the above construction, we see that Theorem \ref{main} follows from

\begin{theorem}\label{newmain}  Fix a sequence $x_i \to \infty$ and a Banach limit $\operatorname{LIM}$.  For any $k=1,2,3$ and signs $\epsilon_1, \dots, \epsilon_k \in \{-,+\}$, the claim
\begin{equation}\label{nak}
 n \mapsto \stackrel{\vee}\epsilon_1 \dots \epsilon_k 
\end{equation}
occurs with positive asymptotic probability.
\end{theorem}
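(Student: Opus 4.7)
The plan is to isolate the genuinely new content of the theorem. For $k=1$ the claim is just the prime number theorem, which shows that each of $\lambda(n)=+$ and $\lambda(n)=-$ occurs with asymptotic probability $1/2$. For $k=2,3$ the strategy combines Hildebrand's combinatorial manipulations \cite{hil}, which use the complete multiplicativity relations $\lambda(2n)=\lambda(3n)=\lambda(5n)=-\lambda(n)$ to propagate sign patterns between residue classes modulo $30$, with the short-interval mean value estimate of \cite{mr}: for any function $H(x)\to\infty$, one has
\[
\frac{1}{H(x)}\sum_{x\leq n\leq x+H(x)}\lambda(n)=o(1)
\]
for almost all $x$. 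The second ingredient prevents $\lambda$ from being constant on arbitrarily long short intervals and, more generally, rules out various pretentious-type scenarios.

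For $k=2$, the two patterns with a sign change, $+-$ and $-+$, must have positive asymptotic probability: if (say) $+-$ had asymptotic probability zero, then a.a.s.\ $\lambda(n)=+$ would force $\lambda(n+1)=+$, and iterating for any fixed $H$ would produce arbitrarily long runs on which $\lambda\equiv +1$, contradicting the \cite{mr} estimate. To extract the constant patterns $++$ and $--$, I would apply the relation $\lambda(3n)=-\lambda(n)$: if $++$ had asymptotic probability zero then $(\lambda(3n),\lambda(3n+3))=(+,+)$ essentially never occurs, which combined with the positive-density occurrences of the mixed patterns at $n$ and short-interval control of the intermediate values $\lambda(3n+1),\lambda(3n+2)$ should yield a contradiction.

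For $k=3$ the six sign patterns involving at least one sign change can be reduced to the $k=2$ case in Hildebrand's style: for example, if $(\lambda(n),\lambda(n+1))=(+,-)$ occurs on a positive-density set, then multiplicativity at one of $2,3,5$ applied to a suitable related integer $n'$ produces $++-$ (or a shift thereof) at $n'$. Bookkeeping over the residue classes mod $30$ handles all six mixed patterns simultaneously.

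The main obstacle is the remaining constant patterns $+++$ and $---$. Here the plan is to assume for contradiction that (say) $+++$ has asymptotic probability zero, and show that this would force $\lambda$ to exhibit a Dirichlet-character-like structure. More precisely, under this assumption and the $k=2$ conclusions, repeated use of $\lambda(3n)=-\lambda(n)$ together with multiplicativity at $2$ and $5$ should show that a.a.s.\ $\lambda(n)=\chi_3(n)f(n)$ for $n$ coprime to $3$, where $f:\N\to\{-1,+1\}$ changes sign only very rarely (so that the $\chi_3$ factor alone prevents three consecutive $+1$'s). The \cite{mr} result, applied to the multiplicative function $\lambda\chi_3$, should then rule out such an approximation since $\lambda$ is not pretentious to $\chi_3$. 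The delicate step, and the main obstacle, is the purely combinatorial derivation of this $\chi_3 f$ structure from the hypothesis that $+++$ is rare; this is precisely the gap that Hildebrand's elementary approach could not bridge, and is where the bulk of the work of the paper is expected to lie.
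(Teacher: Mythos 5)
Your high-level outline agrees with the paper on the $k=1$ case, on $+-$ and $-+$ (exactly the paper's argument via Theorem \ref{mrt}), and on the identification of $+++$/$---$ as the hard cases requiring the $\chi_3$-pretentiousness argument and Theorem \ref{mrt-3}. However, two steps as you have sketched them do not go through.

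First, for $++$ and $--$ at $k=2$: your proposal to use $\lambda(3n)=-\lambda(n)$ does not yield a contradiction. If $++$ and $--$ have probability zero, then $\lambda(n)=-\lambda(n+1)$ a.a.s.; passing through the chain $\lambda(3n)=-\lambda(3n+1)=\lambda(3n+2)=-\lambda(3n+3)$ and also using $\lambda(3n)=-\lambda(n)$, $\lambda(3n+3)=-\lambda(n+1)$ simply reproduces $\lambda(n)=-\lambda(n+1)$, because an odd prime carries the same number of sign changes across a step of $3$. Multiplying by $3$ is parity-preserving here and the argument is vacuous. One must instead use $\lambda(2n)=-\lambda(n)$: the paper's actual device is the pigeonhole among the three equalities $\lambda(2n)=\lambda(2n+1)$, $\lambda(2n+1)=\lambda(2n+2)$, $\lambda(2n)=\lambda(2n+2)$ (the last being equivalent to $\lambda(n)=\lambda(n+1)$), which gives the quantitative bound that $\lambda(n)=\lambda(n+1)$ holds with asymptotic probability at least $1/3$, whence $++$ and $--$ each have probability at least $1/6$. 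You could alternatively derive a qualitative contradiction with $2n$ in place of $3n$, but as written the step fails.

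Second, your claim that ``the six sign patterns involving at least one sign change can be reduced to the $k=2$ case'' is not correct. Only the four boundary patterns $++-$, $-++$, $+--$, $--+$ follow from $k=2$ by the simple translation argument (if say $++-$ had probability zero, then $n\mapsto{\stackrel{\vee}{+}}{+}$ would imply $n+1\mapsto{\stackrel{\vee}{+}}{+}$ a.a.s., yielding arbitrarily long $+$-runs with probability bounded below, against Theorem \ref{mrt}). The patterns $+-+$ and $-+-$ do \emph{not} follow in this way, because forbidding $+-+$ is compatible with both $+-$ and $-+$ occurring at positive density. The paper devotes all of Section 4 to $+-+$: from the hypothesis that $+-+$ is forbidden a.a.s., one derives (via the multiplicativity relations at $2$ and $3$ in Proposition \ref{clam} and Corollary \ref{clam-2}) that a $+$-run of length $\ell$ propagates to a $+$-run of length roughly $\frac{3}{2}\ell$ near $\frac{3}{2}n$; iterating this and controlling densities via the quantities $p_a$ produces arbitrarily long $+$-runs with probability bounded below, contradicting Theorem \ref{mrt}. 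This is a genuinely separate argument and is not a ``bookkeeping over residue classes mod $30$'' reduction. Your description of the $+++$/$---$ case, by contrast, is a fair summary of what Sections 3 actually does (establishing $\lambda(3n+1)=-\lambda(3n+2)$ and $\lambda(3n-1)=-\lambda(3n+1)$ a.a.s., hence that $\lambda\chi_3$ is a.a.s. locally constant, then invoking Theorem \ref{mrt-3}).
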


Indeed, if \eqref{nak} occurs with zero natural density, one obtains a contradiction to Theorem \ref{newmain} after selecting $x_i$ as discussed in the start of the section, with $P_0$ taken to be the property \eqref{nak}.  Similarly, Theorem \ref{main-2} follows from

\begin{theorem}\label{newmain2}  Fix a sequence $x_i \to \infty$ and a Banach limit $\operatorname{LIM}$.  For any $k=1,2$ and $\epsilon_1, \dots, \epsilon_k \in \{-1,0,+1\}$, the claim
\begin{equation}\label{nak2}
(\mu(n),\dots,\mu(n+k-1)) = (\epsilon_1,\dots,\epsilon_k)
\end{equation}
occurs with positive asymptotic probability.
\end{theorem}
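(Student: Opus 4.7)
For $k=1$, each sign value of $\mu$ occurs on a set of positive natural density---$3/\pi^2$ for $\mu(n)=\pm 1$ and $1-6/\pi^2$ for $\mu(n)=0$---so each holds with positive asymptotic probability in the Banach-limit sense of Section \ref{asym}. For $k=2$ with $\epsilon_1\epsilon_2 = 0$, I would exhibit explicit local conditions forcing the pattern: for $(0,0)$ take $4\mid n$ and $9\mid n+1$, which has density $1/36$; for mixed patterns $(0,\epsilon)$ or $(\epsilon,0)$ with $\epsilon\in\{\pm 1\}$, fix a squared prime factor on one coordinate and impose a squarefreeness plus sign condition on the other, handled via the Chinese remainder theorem together with the mean value of $\mu$ on arithmetic progressions.

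\textbf{The hard case} is $(\epsilon_1,\epsilon_2) \in \{-1,+1\}^2$. I would argue by contradiction: suppose, say, $(+1,-1)$ has zero asymptotic probability. Then asymptotically almost surely, on any pair of consecutive squarefree integers the transition $(+1,-1)$ is forbidden, forcing the signs of $\mu$ on any maximal run of consecutive squarefrees to form a monotone sequence (all $-1$'s followed by all $+1$'s). If this monotonicity could be propagated across the non-squarefree gaps to essentially the whole window $[x,x+h]$ with $h=h(x)\to\infty$ slowly, then a positive proportion of squarefrees in the window would share a common sign of $\mu$, producing a bias directly contradicting the short-interval mean value estimate of \cite{mr} applied to $f=\mu$.

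\textbf{The main obstacle} is propagation across the non-squarefree gaps, since consecutive squarefrees occur only in runs of bounded typical length; this is where the chain argument used for $\lambda$ in \cite{mr} breaks down. To bridge the gaps I would introduce a random graph $G=G(x)$ whose vertices are the squarefree integers in $[x,x+h]$ and whose edges join pairs $(m,m')$ for which additional controlled squarefreeness of auxiliary nearby integers forces an inequality between $\mu(m)$ and $\mu(m')$ under the contradiction hypothesis. Edge probabilities are computable via the fundamental lemma of the sieve, and the key step is to show that $G(x)$ is a.a.s.\ connected, or at least contains a giant component covering a positive fraction of squarefrees in the window. Because the edges are strongly correlated---they all depend on the same local squarefreeness data---standard Erd\H{o}s--R\'enyi machinery does not apply, so a direct analysis of the component structure of $G(x)$ is required; this is the principal technical difficulty. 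Once connectivity is established, monotonicity propagates along the giant component and yields the contradiction with \cite{mr}.
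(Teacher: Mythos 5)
Your high-level plan has the right skeleton---handle $k=1$ and the zero-containing patterns directly, argue by contradiction for the all-nonzero patterns, and reduce the hard case to connectivity of a random graph on the squarefree points of a short window. But there is a genuine gap in the middle of your hard-case argument: you never say how to get any relation between $\mu(m)$ and $\mu(m')$ when $m$ and $m'$ are \emph{not} consecutive squarefrees. Your edge criterion (``additional controlled squarefreeness of auxiliary nearby integers forces an inequality between $\mu(m)$ and $\mu(m')$'') only yields information when the integers strictly between $m$ and $m'$ are all squarefree, so you are back to chaining through runs of consecutive squarefrees, which have bounded typical length. This is exactly the obstruction you correctly identify, but your graph does not actually bridge it.

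The idea that makes the paper's argument work, and which your proposal omits, is to \emph{dilate} the hypothesis using multiplicativity together with Lemma~\ref{linear}. Avoiding $(+1,\epsilon)$ a.a.s.\ yields (after twisting by a completely multiplicative $\chi$ with $\chi(2)=-\epsilon$, $\chi(p)=+1$ for $p>2$) that $\mu\chi(n)=\mu\chi(n+1)$ whenever $\mu^2(n)=\mu^2(n+1)=1$. Replacing $n$ by $n/d$ and multiplying through by $\mu\chi(d)$ gives the key relation
\[
\bigl(\mu^2(n)=\mu^2(n+d)=1 \ \wedge\ d\mid n\bigr)\ \Longrightarrow\ \mu\chi(n)=\mu\chi(n+d)\quad \text{a.a.s.},
\]
for any fixed $d$. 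This lets one hop over non-squarefree stretches by choosing an odd prime $p$ with $p\mid n$ and connecting $n$ to $n+p$, regardless of what happens in between. The paper's random graph accordingly has vertex set $\{a:\mu^2(\mathbf n+a)=1\}$ and an edge $\{a,b\}$ whenever $|b-a|$ is an odd prime dividing $\mathbf n+a$; the divisibility condition, not squarefreeness of intermediate points, is what creates the edges. Without some version of this dilation step, the monotonicity you derive along runs of consecutive squarefrees has no mechanism to propagate across a single non-squarefree gap, and the contradiction with the Matom\"aki--Radziwi{\l}{\l} short-interval bound cannot be reached. Note also that the short-interval estimate must be applied to the twisted function $\mu\chi$ (Theorem~\ref{mrt-3}), not to $\mu$ alone, since the forced constancy is of $\mu\chi$ on $[n,n+h]$; the untwisted estimate would not directly contradict $\mu\chi$ being constant.

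The remaining parts of your proposal are fine in spirit. For $k=1$ and the zero-containing $k=2$ patterns, your explicit local constructions (e.g.\ $4\mid n$, $9\mid n+1$ for $(0,0)$) plus equidistribution of $\mu$ in progressions give positive asymptotic probability, matching what the paper achieves via inclusion--exclusion. And you are right that the graph-connectivity step is the principal technical burden and that Erd\H{o}s--R\'enyi heuristics do not directly apply; but the argument cannot be carried out until the edge mechanism is the dilated relation above.
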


Henceforth the sequence $x_i \to \infty$ and Banach limit $\operatorname{LIM}$ will be fixed, and this fact will be omitted from the explicit formulation of all the propositions below.  (Thus, for instance, with this convention the first sentence of Theorems \ref{newmain} and \ref{newmain2} would be deleted.)

We now set out some basic rules for manipulating asymptotic probability, beyond the laws of finitely additive probability.  The first rule allows one to make linear changes of variable.

\begin{lemma}[Linear change of variable]\label{linear}  Let $P(n)$ be a property of integers $n$. Then for any natural number $q$ and integer $r$, the asymptotic probability of $P(qn+r)$ is equal to $q$ times the asymptotic probability of ``$P(n)$ and $n=r \hbox{ mod } q$''.  
\end{lemma}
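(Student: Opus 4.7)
The plan is to unfold the definition of asymptotic probability from Section~\ref{asym} on both sides and relate them through the substitution $m = qn+r$. Writing $A$ for the asymptotic probability of $P(qn+r)$ and $B$ for that of $P(n) \wedge n \equiv r \mods{q}$, I would first express
$$
A = \operatorname{LIM}\left(\frac{1}{\log i}\sum_{\substack{x_i/i \leq n \leq x_i \\ P(qn+r)}}\frac{1}{n}\right)_{i=1}^\infty, \quad B = \operatorname{LIM}\left(\frac{1}{\log i}\sum_{\substack{x_i/i \leq n \leq x_i \\ P(n),\ n \equiv r \mods{q}}}\frac{1}{n}\right)_{i=1}^\infty,
$$
and then substitute $m = qn+r$ in the sum defining $A$. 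Since $n = (m-r)/q$ and $1/n = q/(m-r)$, this converts $A$ to
$$
\operatorname{LIM}\left(\frac{1}{\log i}\sum_{\substack{qx_i/i + r \leq m \leq qx_i + r \\ P(m),\ m \equiv r \mods{q}}}\frac{q}{m-r}\right)_{i=1}^\infty.
$$

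To identify this with $qB$, I would carry out two routine reductions. First, since $r$ is fixed, $\frac{q}{m-r} = \frac{q}{m} + O_{q,r}(1/m^2)$, and summing the error over $m$ yields a convergent series of size $O_{q,r}(1)$, which is annihilated by the $1/\log i$ factor before one passes to the Banach limit. Second, the symmetric difference between $[qx_i/i + r,\, qx_i + r]$ and $[x_i/i,\, x_i]$ is covered by a bounded number of subintervals whose endpoints lie at multiplicative ratio $q + O_{q,r}(i/x_i)$; on each such subinterval the logarithmic mass $\sum_m \frac{q}{m}$ is $O_q(\log q) = O_q(1)$, again negligible after division by $\log i \to \infty$.

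Combining these reductions produces
$$
A = \operatorname{LIM}\left(\frac{1}{\log i}\sum_{\substack{x_i/i \leq m \leq x_i \\ P(m),\ m \equiv r \mods{q}}}\frac{q}{m}\right)_{i=1}^\infty = qB,
$$
as required. The conceptual content is that the logarithmic averaging used to define asymptotic probability is approximately invariant under the affine change of variable $n \mapsto qn + r$: multiplicative dilation by a fixed $q$ distorts logarithmic totals only by the bounded quantity $\log q$, while a bounded additive shift of the weight perturbs the sum by an even smaller error. I do not anticipate any real obstacle here; the main task is simply to verify that every error term is $O_{q,r}(1)$, so that normalisation by $\log i \to \infty$ annihilates it before the Banach limit is taken.
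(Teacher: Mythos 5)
Your proof is correct and follows essentially the same route as the paper: substitute $m = qn+r$, then control the discrepancy between the weight $1/n$ and $q/m$ and the discrepancy between the two summation ranges, both of which contribute $O_{q,r}(1)$ and hence vanish after normalising by $\log i$ and taking the Banach limit. The only cosmetic difference is in the weight comparison: you bound the additive error $\frac{q}{m-r} - \frac{q}{m} = O_{q,r}(1/m^2)$ directly and sum it to a convergent $O_{q,r}(1)$, whereas the paper first discards the (probability-zero) event $n \le C$ to obtain a multiplicative $(1+o(1))$ factor as $C \to \infty$ and then sends $C \to \infty$ after passing to the Banach limit; the two devices are interchangeable here and yield the same conclusion.
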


Note that the $q=1$ case of Lemma \ref{linear} gives translation invariance: the asymptotic probability of $P(n)$ is equal to the asymptotic probability of $P(n+h)$ for any fixed $h$.  Another useful corollary of Lemma \ref{linear} is that if $r_1,\dots,r_q$ is any fixed set of representatives of residues modulo $q$, then $P(n)$ holds a.a.s. if and only if $P(qn+r)$ holds a.a.s. for each $r=r_1,\dots,r_q$.

\begin{proof}  Let $C$ be a large constant depending on $q,r$.  By deleting an event of asymptotic probability zero, we may assume that $P(n)$ only holds for $n > C$.  For sufficiently large $C$, this implies that if $P(qn+r)$ holds, then $\frac{1}{n} = (1+o(1)) \frac{q}{qn+r}$ where $o(1)$ goes to zero as $C \to \infty$, and thus on making the change of variables $m=qn+r$,
$$ \sum_{\substack{\frac{1}{i} x_i \leq n \leq x_i \\ P(qn+r)}} \frac{1}{n} = (1+o(1)) q \sum_{\substack{\frac{q}{i} x_i +r \leq m \leq qx_i + r \\ P(m) \hbox{ and } m = r \hbox{ mod } q}} \frac{1}{m}.$$
Also, we have
$$ \sum_{\frac{q}{i} x_i + r \leq m \leq qx_i + r: P(m) \hbox{ and } m=r \hbox{ mod } q} \frac{1}{m} = \sum_{\frac{1}{i}x_i \leq m \leq x_i: P(m) \hbox{ and } m=r \hbox{ mod } q} \frac{1}{m} + O_{C,q}(1)$$
for a quantity $O_{C,q}(1)$ bounded in magnitude by a function of $C$ and $q$.  Dividing by $\log i$, we conclude that
$$ \frac{1}{\log i} \sum_{\frac{1}{i} x_i \leq n \leq x_i: P(qn+r)} \frac{1}{n} = (1+o(1))q \frac{1}{\log i} \sum_{\frac{1}{i} x_i \leq n \leq x_i: P(n)} \frac{1}{n} + \frac{O_{C,q}(1)}{\log i},$$
and on taking Banach limits and then sending $C$ to infinity, we obtain the claim.
\end{proof}

Now we encode some known facts about the Liouville and M\"obius function in this language.  From the prime number theorem in arithmetic progressions we have
$$ \lim_{x \to \infty} \frac{1}{x} \sum_{n \leq x} \lambda(qn+r) = 0$$
for any $q \geq 1$ and $r \in \Z$,
and thus by summation by parts
$$ \lim_{x \to \infty} \frac{1}{\log x} \sum_{n \leq x} \frac{\lambda(qn+r)}{n} = 0.$$
Similarly for the M\"obius function.  We conclude:

\begin{proposition}\label{pnt}  Let $q \geq 1$ and $r \in \Z$.  Then the assertions $\lambda(qn+r)=+1$ and $\lambda(qn+r)=-1$ (or in the notation of this paper, $qn+r \mapsto \stackrel{\vee}+$ and $qn+r \mapsto \stackrel{\vee}-$) each occur with asymptotic probability $1/2$.  Also, the assertions $\mu(qn+r)=+1$ and $\mu(qn+r)=-1$ occur with equal asymptotic probability.
\end{proposition}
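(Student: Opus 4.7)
The plan is to translate the prime number theorem in arithmetic progressions directly into the Banach-limit formalism of Section \ref{asym}. First I would recall that, for any fixed $q\geq 1$ and $r\in\Z$, the prime number theorem in arithmetic progressions gives
\[
\sum_{n\leq x}\lambda(qn+r) = o(x) \quad\text{and}\quad \sum_{n\leq x}\mu(qn+r) = o(x),
\]
and then apply partial summation against the weight $1/n$ to upgrade each to the logarithmic form
\[
\sum_{n\leq x}\frac{\lambda(qn+r)}{n} = o(\log x), \qquad \sum_{n\leq x}\frac{\mu(qn+r)}{n} = o(\log x).
\]
Subtracting the contribution from $n<x_i/i$ (which is itself $o(\log i)$ by applying the same bound at $x_i/i$), and using $\sum_{x_i/i\le n\le x_i} 1/n = \log i+O(1)$, I get
\[
\frac{1}{\log i}\sum_{x_i/i\le n\le x_i}\frac{\lambda(qn+r)}{n}\longrightarrow 0,
\]
and similarly for $\mu$, as $i\to\infty$. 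Taking the Banach limit turns each left-hand side into an asymptotic expectation that vanishes.

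Next I would use the pointwise decomposition
\[
\lambda(qn+r) \;=\; \onef_{\lambda(qn+r)=+1} \;-\; \onef_{\lambda(qn+r)=-1},
\]
and the analogous identity for $\mu$ (where both sides are $0$ on the set $\mu(qn+r)=0$). Combined with the finite additivity of asymptotic probability established in Section \ref{asym} and the vanishing mean value just derived, this immediately yields that the asymptotic probabilities of $\lambda(qn+r)=+1$ and $\lambda(qn+r)=-1$ are equal, and similarly that the asymptotic probabilities of $\mu(qn+r)=+1$ and $\mu(qn+r)=-1$ are equal.

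Finally, for the Liouville statement I would invoke the additional pointwise identity $\onef_{\lambda(qn+r)=+1}+\onef_{\lambda(qn+r)=-1}=1$, valid because $\lambda$ never vanishes; together with the equality of the two probabilities this forces each to equal $1/2$. For $\mu$ the corresponding identity fails (the integers divisible by a square contribute a positive-density error), which is exactly why the proposition only asserts equality rather than a specific numerical value in the M\"obius case. No genuine obstacle arises: the only technical step is the passage from the Cesàro estimate provided by the PNT to the truncated logarithmic average used in Section \ref{asym}, which is a routine summation-by-parts computation.
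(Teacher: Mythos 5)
Your proof follows exactly the paper's strategy: PNT in arithmetic progressions plus partial summation to show the asymptotic expectation of $\lambda(qn+r)$ (and of $\mu(qn+r)$) vanishes, then the pointwise decomposition $\lambda(qn+r)=\onef_{\lambda(qn+r)=+1}-\onef_{\lambda(qn+r)=-1}$, finite additivity of asymptotic probability, and, for $\lambda$, the non-vanishing identity $\onef_{\lambda(qn+r)=+1}+\onef_{\lambda(qn+r)=-1}=1$.

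However, the step you call ``routine'' is where there is a real gap. You assert that $\sum_{n<x_i/i}\lambda(qn+r)/n=o(\log i)$ ``by applying the same bound at $x_i/i$.'' Applying your derived bound $\sum_{n\le x}\lambda(qn+r)/n=o(\log x)$ at $x=x_i/i$ only yields $o\bigl(\log(x_i/i)\bigr)$, and since the paper's construction allows an arbitrary sequence $x_i\to\infty$ (nothing forces $x_i$ to grow polynomially in $i$; e.g.\ $x_i=\exp(i)$ is permitted), the quantity $\log(x_i/i)$ can be vastly larger than $\log i$. Thus $o(\log(x_i/i))$ is \emph{not} $o(\log i)$, and the subtraction of two quantities each $o(\log x_i)$ need not be $o(\log i)$. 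In short, the crude estimate $\sum_{n\le x}\lambda(qn+r)/n=o(\log x)$ is too weak to control the truncated average $\frac{1}{\log i}\sum_{x_i/i\le n\le x_i}\lambda(qn+r)/n$ on its own.

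The repair is standard. One way is to note that the PNT in arithmetic progressions for the fixed modulus $q$ gives $\sum_{n\le x}\lambda(qn+r)=O\bigl(x\exp(-c\sqrt{\log x})\bigr)$, so partial summation shows $\sum_{n\le x}\lambda(qn+r)/n$ is not merely $o(\log x)$ but in fact \emph{bounded} (indeed convergent); then the truncated sum is $O(1)=o(\log i)$ trivially. Alternatively, one can apply Abel summation directly over $(x_i/i,x_i]$: with $A(t)=\sum_{n\le t}\lambda(qn+r)$,
\begin{equation*}
\sum_{x_i/i<n\le x_i}\frac{\lambda(qn+r)}{n}=\frac{A(x_i)}{x_i}-\frac{A(x_i/i)}{x_i/i}+\int_{x_i/i}^{x_i}\frac{A(t)}{t^2}\,dt,
\end{equation*}
where the boundary terms are $O(1)$ and, splitting the integral at the threshold beyond which $|A(t)|\le\eps t$, one gets $O_\eps(1)+\eps\log i$; dividing by $\log i$ and letting $\eps\to0$ finishes. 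Once this step is repaired, the remainder of your argument (the indicator decomposition, finite additivity, the extra identity for $\lambda$, and the reason one only gets equality for $\mu$) is correct and coincides with the paper's.
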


This immediately gives the $k=1$ case of Theorem \ref{newmain}.  Since $\mu^2$ has density 
$$\frac{1}{\zeta(2)} = \frac{6}{\pi^2} = 0.6079\dots,$$ 
we also know that $\mu^2(n)=1$ with asymptotic probability $\frac{1}{\zeta(2)}$, and hence by the above proposition the three events $\mu(n)=+1, \mu(n)=0, \mu(n)=-1$ occur with asymptotic probability $\frac{1}{2\zeta(2)}$, $1-\frac{1}{\zeta(2)}$, $\frac{1}{2\zeta(2)}$ respectively.  Thus we also obtain the $k=1$ case of Theorem \ref{newmain2}.

From Proposition \ref{pnt} and the Chinese remainder theorem, we also see that for any fixed $w$, the asymptotic probability that $\mu(n+1)=+1$ and $n$ is not divisible by $p^2$ for any $p \leq w$ is equal to the asymptotic probability that $\mu(n+1)=-1$ and $n$ is not divisible by $p^2$ for any $p \leq w$.  Taking limits as $w \to \infty$ (noting that the asymptotic probability that $n$ is divisible by $p^2$ for some $p>w$ is $O(1/w)$), one concludes that the the pair $(\mu(n),\mu(n+1))$ takes the values $(0,+1)$ and $(0,-1)$ with equal asymptotic probability, and similarly takes the values $(+1,0)$ and $(-1,0)$ with equal asymptotic probability.  Also, standard sieve theory arguments also show that the event $\mu^2(n)=\mu^2(n+1)=+1$ (which is excluding two residue classes modulo $p^2$ for each prime $p$) occurs with asymptotic probability
\begin{equation}\label{cdef}
 c := \prod_p \left(1 - \frac{2}{p^2}\right) = 0.3226\dots
\end{equation}
and hence by inclusion-exclusion, $(\mu(n),\mu(n+1))$ takes the value $(0,0)$ with asymptotic probability
$$ 1 - \frac{2}{\zeta(2)} + c = 0.1067\dots.$$
Further inclusion-exclusion then shows $(\mu(n),\mu(n+1))$ takes each of the four values $(+1,0)$, $(-1,0)$, $(0,+1)$, $(0,-1)$ with asymptotic probability
$$ \frac{1}{2} ( \frac{1}{\zeta(2)} - c ) = 0.1426\dots.$$
This gives all the cases of Theorem \ref{newmain2} except for those in which $k=2$ and $(\epsilon_1,\epsilon_2) = (+1,+1), (+1,-1), (-1,+1), (-1,-1)$, the treatment of which we defer to Sections \ref{Graph}-\ref{conclude-sec}.

Recently, the first two authors \cite{mr} established (among other things) that
$$ \limsup_{X \to \infty} \frac{1}{X} \sum_{X \leq x \leq 2X} \left|\frac{1}{h} \sum_{x \leq n \leq x+h} \lambda(n)\right| \leq c(h)$$
for any $h \geq 1$ some quantity $c(h)$ that goes to zero as $h \to \infty$, and similarly with $\lambda$ replaced by $\mu$.  From summation by parts this implies that
$$ \limsup_{x \to \infty} \frac{1}{\log x} \sum_{n \leq x} \frac{1}{n} \left|\frac{1}{h} \sum_{j=0}^h \lambda(n+j)\right| \leq c(h)$$

and thus

\begin{theorem}[Liouville or M\"obius in short intervals]\label{mrt}  For any $\eps > 0$ and any $h$ that is sufficiently large depending on $\eps$, one has
$$ \left |\sum_{j=0}^h \lambda(n+j)\right| \leq \eps h$$
with asymptotic probability at least $1-\eps$. Similarly with $\lambda$ replaced by $\mu$.
\end{theorem}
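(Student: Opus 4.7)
\medskip
\noindent\textbf{Proof plan.} The plan is to obtain Theorem~\ref{mrt} directly from the Matom\"aki-Radziwi{\l}{\l} estimate recalled immediately before the statement, by recasting it as a first-moment bound under the asymptotic probability measure and applying Markov's inequality. Writing $F(n) := \bigl|\tfrac{1}{h}\sum_{j=0}^h \lambda(n+j)\bigr| \in [0,1]$, I aim to produce an absolute constant $C$ for which the asymptotic expectation of $F(n)$ satisfies $\E[F] \leq C\, c(h)$. Markov's inequality then gives $\Pr(F(n) > \eps) \leq C c(h)/\eps$, which is at most $\eps$ as soon as $h$ is chosen large enough that $c(h) \leq \eps^2/C$.

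To transfer the Matom\"aki-Radziwi{\l}{\l} bound to a bound on $\tfrac{1}{\log i}\sum_{x_i/i \leq n \leq x_i} \tfrac{1}{n}F(n)$, I would dyadically decompose $[x_i/i, x_i]$ into the $\lfloor\log_2 i\rfloor + O(1)$ intervals $[2^k, 2^{k+1})$ and apply the original dyadic Matom\"aki-Radziwi{\l}{\l} estimate (the first form displayed in the paragraph preceding the theorem) on each scale. This yields $\sum_{2^k \leq n < 2^{k+1}} \tfrac{1}{n} F(n) \leq c(h) + \delta_k$ for some errors $\delta_k \to 0$ as $k \to \infty$. Summing over the relevant scales, and splitting off the $O_\eta(1)$ "small" scales on which $\delta_k$ may still exceed a given $\eta > 0$, gives after division by $\log i$ a bound of the form $(c(h)+\eta)/\log 2 + o_i(1)$ for every $\eta$. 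Applying $\operatorname{LIM}$ and then letting $\eta \to 0$ produces $\E[F] \leq c(h)/\log 2$.

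The use of Markov's inequality in this finitely additive setting is legitimate because the Banach limit is linear and satisfies $\liminf \leq \operatorname{LIM} \leq \limsup$, hence is monotone on nonnegative bounded sequences; the induced expectation $\E$ inherits linearity and monotonicity, and the pointwise inequality $\mathbf{1}_{F>\eps} \leq F/\eps$ integrates to $\Pr(F>\eps) \leq \E[F]/\eps$. The argument for $\mu$ is word-for-word identical, invoking the analogous Matom\"aki-Radziwi{\l}{\l} bound for $\mu$ also recalled in the excerpt. I expect the only genuine subtlety to be the dyadic transfer in the second step: since $x_i \to \infty$ does not force $x_i/i \to \infty$, some dyadic scales in $[x_i/i, x_i]$ may lie below the regime in which the Matom\"aki-Radziwi{\l}{\l} error is already negligible; but this is harmless because only $O_\eta(1)$ scales can have $\delta_k > \eta$, while the total number of scales grows like $\log_2 i$.
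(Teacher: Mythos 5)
Your proposal is correct and follows essentially the same route the paper takes: the paper states the Matom\"aki--Radziwi{\l}{\l} dyadic mean-value bound, notes its logarithmic-average consequence via partial summation, and declares the theorem to follow, leaving implicit exactly the two steps you spell out---transferring the dyadic estimate to the weighted average over $[x_i/i,\,x_i]$ and then applying Markov's inequality under the Banach-limit ``expectation.''  Your handling of the finitely many low scales (where $x_i/i$ need not tend to infinity) correctly resolves the one point the paper glosses over, since the intermediate $\limsup_{x}\frac{1}{\log x}\sum_{n\le x}F(n)/n\le c(h)$ bound quoted in the paper does not by itself control $\frac{1}{\log i}\sum_{x_i/i\le n\le x_i}F(n)/n$; one must, as you do, go back to the dyadic form on each scale.  (A trivial cosmetic slip: $F(n)=\bigl|\frac{1}{h}\sum_{j=0}^h\lambda(n+j)\bigr|$ is bounded by $1+1/h$ rather than $1$, which changes nothing.)
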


The above results also hold when the Liouville function is twisted by a fixed real Dirichlet character:

\begin{theorem}[Twisted Liouville or M\"obius in short intervals]\label{mrt-3}  Let $\chi$ be a fixed real Dirichlet character. For any $\eps > 0$ and any sufficiently large $h$, one has
$$  \left|\sum_{j=0}^h \lambda(n+j) \chi(n+j)\right| \leq \eps h$$
with asymptotic probability at least $1-\eps$.  Similarly with $\lambda$ replaced by $\mu$.
\end{theorem}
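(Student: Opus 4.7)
The plan is to reduce Theorem \ref{mrt-3} to a direct application of the main result of \cite{mr}, by observing that $\lambda\chi$ and $\mu\chi$ are themselves bounded multiplicative functions. Since $\chi$ is a fixed real Dirichlet character, the pointwise product $\lambda(n)\chi(n)$ is completely multiplicative with values in $\{-1,0,+1\}$, while $\mu(n)\chi(n)$ is multiplicative with values in $\{-1,0,+1\}$. The long-interval means $\frac{1}{X}\sum_{n \leq X}\lambda(n)\chi(n)$ and $\frac{1}{X}\sum_{n \leq X}\mu(n)\chi(n)$ tend to zero, by the prime number theorem for Dirichlet $L$-functions (equivalently, the non-vanishing of $L(s,\chi)$ on the line $\Re s = 1$, combined with standard Mellin-transform manipulations of the Euler product of $\sum \lambda(n)\chi(n)n^{-s}$).

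First, I would apply the short-interval theorem of \cite{mr} to the bounded multiplicative functions $\lambda\chi$ and $\mu\chi$. Since $\chi$ is fixed, there is no uniformity issue, and this yields a quantity $c(h) \to 0$ (possibly depending on $\chi$) such that
$$\limsup_{X \to \infty} \frac{1}{X} \sum_{X \leq x \leq 2X} \left|\frac{1}{h}\sum_{x \leq n \leq x+h} \lambda(n)\chi(n)\right| \leq c(h),$$
and similarly with $\lambda$ replaced by $\mu$. Second, summation by parts, carried out exactly as in the derivation of Theorem \ref{mrt} just above, converts this uniform-average statement into the logarithmically weighted form
$$\limsup_{x \to \infty} \frac{1}{\log x} \sum_{n \leq x} \frac{1}{n} \left|\frac{1}{h}\sum_{j=0}^h \lambda(n+j)\chi(n+j)\right| \leq c(h),$$
and likewise for $\mu$.

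Third, I would transfer this bound to the asymptotic-probability framework of Section \ref{asym}: applying Markov's inequality along the sequence $x = x_i$ and then passing to the Banach limit shows that the asymptotic probability of the event $\left|\tfrac{1}{h}\sum_{j=0}^h \lambda(n+j)\chi(n+j)\right| > \varepsilon$ is at most $c(h)/\varepsilon$. Choosing $h$ large enough that $c(h) \leq \varepsilon^2$ then gives the desired bound $\left|\sum_{j=0}^h \lambda(n+j)\chi(n+j)\right| \leq \varepsilon h$ with asymptotic probability at least $1-\varepsilon$, and the argument for $\mu$ is identical. The only place requiring any thought beyond invoking \cite{mr} is the verification that the twisted functions $\lambda\chi$ and $\mu\chi$ satisfy the non-pretentiousness hypotheses of the theorem there; this is where one might anticipate a subtlety, but because $\chi$ is a \emph{fixed} real character and neither $\lambda\chi$ nor $\mu\chi$ is close (in the Granville--Soundararajan pretentious distance) to any $n^{it}$, the hypothesis is easily met, and no obstacle actually arises.
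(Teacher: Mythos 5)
Your proposal is correct and follows exactly the (implicit) route of the paper, which states Theorem \ref{mrt-3} immediately after Theorem \ref{mrt} as a direct consequence of \cite{mr}, noting only that the complex-character case requires the later extension in \cite[Appendix A]{mrt}. One small clarification: the first main theorem of \cite{mr} applies uniformly to \emph{all} multiplicative $f:\N\to[-1,1]$ with no non-pretentiousness hypothesis (it compares the short-interval average to the long-interval average), so the worry you raise at the end does not arise at all; the only input needed beyond \cite{mr} is that the long-interval mean of $\lambda\chi$ (resp.\ $\mu\chi$) tends to zero, which as you say is the prime number theorem in arithmetic progressions, and then partial summation and Markov give the asymptotic-probability form.
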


This result was recently extended to the case of complex Dirichlet characters in \cite[Appendix A]{mrt}, but we will not need that extension here.  Indeed, we will only need Theorem \ref{mrt-3} with $\chi$ the non-trivial character $\chi_3$ of period $3$.

As already essentially observed in \cite{mr}, Theorem \ref{mrt} gives the $k=2$ case of Theorem \ref{newmain}:

\begin{theorem}\label{pap} The assertions $n \mapsto {\stackrel{\vee}{+}}{-}$, $n \mapsto {\stackrel{\vee}{-}}{+}$ hold with positive asymptotic probability, and the assertions $n \mapsto {\stackrel{\vee}{+}}{+}$, $n \mapsto {\stackrel{\vee}{-}}{-}$ hold with asymptotic probability at least $1/6$ each.
\end{theorem}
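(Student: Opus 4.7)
My plan starts with using Proposition \ref{pnt}, the translation-invariance case $q=1$ of Lemma \ref{linear}, and the finite additivity of asymptotic probability to establish the identities $p_{++} = p_{--}$, $p_{+-} = p_{-+}$, and $p_{++}+p_{+-} = 1/2$. This reduces the theorem to two claims: (a) $p_{+-} > 0$, whence $p_{-+}>0$ by symmetry; and (b) $p_{++} \geq 1/6$, whence the same bound for $p_{--}$.

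For (a) I argue by contradiction using Theorem \ref{mrt}. If $p_{+-} = 0$, then by translation invariance the forbidden transition $(\lambda(n+j),\lambda(n+j+1))=(+,-)$ has asymptotic probability zero for each fixed $j\ge 0$, and by finite additivity over $0\le j<h$ (for a large fixed $h$), a.a.s.\ no such transition occurs in $[n,n+h]$. Intersecting this with the asymptotic-probability-$1/2$ event $\lambda(n)=+1$ and propagating inductively along the window, we deduce $\lambda(n+j)=+1$ for all $0\le j\le h$ on an event of asymptotic probability $1/2$, so $\sum_{j=0}^{h}\lambda(n+j) = h+1$. For any $\eps<1/2$ and $h=h(\eps)$ sufficiently large, Theorem \ref{mrt} forbids this event from having asymptotic probability more than $\eps<1/2$, a contradiction.

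For (b) I argue by contradiction, this time leveraging the multiplicative identity $\lambda(2k) = -\lambda(k)$. If $p_{++} < 1/6$, then $p_{+-} > 1/3$, and hence the asymptotic probability that $\lambda(n+1)=-\lambda(n)$ is $2p_{+-} > 2/3$. Decomposing by the parity of $n$ via the $q=2$ case of Lemma \ref{linear} and substituting $\lambda(2m) = -\lambda(m)$, $\lambda(2m+2) = -\lambda(m+1)$, one obtains
\[
p[\lambda(2m+1) = \lambda(m)] + p[\lambda(2m+1) = \lambda(m+1)] = 2 \cdot p[\lambda(n+1) = -\lambda(n)] = 4 p_{+-} > 4/3.
\]
Inclusion--exclusion (justified by finite additivity) then forces the joint event $\lambda(m) = \lambda(2m+1) = \lambda(m+1)$ to have asymptotic probability strictly greater than $1/3$. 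Since this event lies inside $\{\lambda(m) = \lambda(m+1)\}$, whose asymptotic probability is $p_{++}+p_{--} = 2p_{++} < 1/3$, we reach a contradiction. The main thing to be careful about in executing this plan is the bookkeeping in the parity decomposition via Lemma \ref{linear}; no recourse to Theorem \ref{mrt-3} or to intricate Hildebrand-style case analysis at the small primes appears to be necessary for this $k=2$ statement.
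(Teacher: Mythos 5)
Your proof is correct and follows essentially the same route as the paper: part (a) uses Theorem \ref{mrt} to rule out long constant runs of $\lambda$, and part (b) is the paper's pigeonhole observation about $\lambda(2n),\lambda(2n+1),\lambda(2n+2)$ combined with the parity decomposition from Lemma \ref{linear}, merely repackaged as a proof by contradiction via inclusion--exclusion rather than stated as a direct lower bound.
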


We remark that the second part of this theorem is essentially due to Harman, Pintz, and Wolke \cite{hpw}.

\begin{proof}  We begin with the sign pattern ${\stackrel{\vee}{+}}{-}$.  Assume for contradiction that $n \mapsto {\stackrel{\vee}{+}}{-}$ holds with zero asymptotic probability.  By Proposition \ref{pnt} and Lemma \ref{linear}, the claims $n \mapsto {\stackrel{\vee}{+}}{\ast}$ and $n \mapsto {\stackrel{\vee}{\ast}}{-}$ hold with asymptotic probability $1/2$, so that ${\stackrel{\vee}{+}}{+}$ and ${\stackrel{\vee}{-}}{-}$ hold with asymptotic probability $1/2$. Hence $n \mapsto {\stackrel{\vee}{-}}{+}$ holds with zero asymptotic probability.  In other words, we have $\lambda(n+1)=\lambda(n)$ a.a.s., hence by Lemma \ref{linear} and finite additivity we have for any fixed $h$ that $\lambda(n) = \lambda(n+1) = \dots = \lambda(n+h)$ a.a.s..  But this contradicts Theorem \ref{mrt} if $h$ is large enough.

The same argument also works for the sign pattern ${\stackrel{\vee}{-}}{+}$.  Now we consider the sign pattern ${\stackrel{\vee}{+}}{+}$.  From Proposition \ref{pnt} and inclusion-exclusion, the sign pattern ${\stackrel{\vee}{-}}{-}$ occurs with the same asymptotic probability as ${\stackrel{\vee}{+}}{+}$.  Thus it suffices to show that $\lambda(n)=\lambda(n+1)$ holds with asymptotic probability at least $1/3$.  But from the pigeonhole principle, at least one of $\lambda(2n+1)=\lambda(2n)$, $\lambda(2n+2)=\lambda(2n+1)$, and $\lambda(2n)=\lambda(2n+2)$ must hold for any $n$, which implies that the asymptotic probabilities of $\lambda(2n+1)=\lambda(2n)$, $\lambda(2n+2)=\lambda(2n+1)$, and $\lambda(n+1)=\lambda(n)$ add up to at least $1$.  On the other hand, from Lemma \ref{linear} we see that the asymptotic probability of $\lambda(n+1)=\lambda(n)$ is the average of the asymptotic probabilities of $\lambda(2n+1)=\lambda(2n)$ and $\lambda(2n+2)=\lambda(2n+1)$, and the claim follows.
\end{proof}

As essentially already observed by Hildebrand \cite{hil}, a simple translation argument then gives four of the eight subcases of the $k=3$ case of Theorem \ref{newmain}:

\begin{corollary} The assertions $n \mapsto {\stackrel{\vee}{+}}{+}{-}$, $n \mapsto {\stackrel{\vee}{-}}{+}{+}$, $n \mapsto {\stackrel{\vee}{+}}{-}{-}$, $n \mapsto {\stackrel{\vee}{-}}{-}{+}$ each hold with positive asymptotic probability.
\end{corollary}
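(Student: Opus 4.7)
The plan is to argue each of the four patterns by contradiction, running the same degeneration-to-long-constant-runs mechanism that drives the proof of Theorem~\ref{pap}. Write $P_{\epsilon_1 \epsilon_2 \epsilon_3}$ for the asymptotic probability that $(\lambda(n), \lambda(n+1), \lambda(n+2)) = (\epsilon_1, \epsilon_2, \epsilon_3)$, and let $a$ denote the common asymptotic probability of $\lambda(n) = \lambda(n+1) = +$ and of $\lambda(n) = \lambda(n+1) = -$; by Theorem~\ref{pap}, $a \geq 1/6$. I will use translation invariance (Lemma~\ref{linear} with $q=1$) to freely shift the window $(n,n+1,n+2)$ and in particular to identify $P(\lambda(n+1) = \lambda(n+2) = \pm) = a$, and I will use Theorem~\ref{mrt} to rule out long constant runs of $\lambda$.

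I treat ${\stackrel{\vee}{+}}{+}{-}$ in detail; the other three are analogous. Suppose for contradiction that $P_{++-} = 0$. Since $P_{+++} + P_{++-} = a$, this forces $P_{+++} = a$. In other words, a.a.s., whenever $\lambda(n) = \lambda(n+1) = +$ we also have $\lambda(n+2) = +$.

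I next propagate this one-step implication to arbitrary fixed length. Fix $h \geq 2$ and let $F_h$ denote the event $\lambda(n) = \lambda(n+1) = \cdots = \lambda(n+h) = +$. Any point in the event $\lambda(n) = \lambda(n+1) = +$ that is not in $F_h$ admits a smallest $k_0 \in \{2,\ldots,h\}$ with $\lambda(n+k_0) = -$, and then $(\lambda(n+k_0-2), \lambda(n+k_0-1), \lambda(n+k_0)) = (+,+,-)$. By shift invariance each such configuration has asymptotic probability zero, and finite additivity over the finitely many values of $k_0$ yields $P(F_h) = a \geq 1/6$. Applying Theorem~\ref{mrt} with $\eps < a$ and $h$ sufficiently large contradicts this, so $P_{++-} > 0$.

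The three remaining patterns follow the same template. For ${\stackrel{\vee}{-}}{+}{+}$, the hypothesis $P_{-++} = 0$ combined with $P_{+++} + P_{-++} = a$ (now using the shifted marginal at $(n+1,n+2)$) forces $P_{+++} = a$, and I iterate backwards in place of forwards. For ${\stackrel{\vee}{+}}{-}{-}$ and ${\stackrel{\vee}{-}}{-}{+}$, the $--$ marginal value $a$ forces $P_{---} = a$ and produces arbitrarily long constant runs of $-$, again contradicting Theorem~\ref{mrt}. The main technical point to watch for is the iteration step: since asymptotic probability is only finitely additive, one cannot take a countable union of forbidden configurations, so the argument must localize to a finite union indexed by the first bad shift $k_0$. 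I expect this to be the only real obstacle, and it is essentially bookkeeping once the right decomposition is in hand.
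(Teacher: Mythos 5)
Your proof is correct and takes essentially the same approach as the paper: hypothesize the pattern has zero asymptotic probability, use it plus translation invariance and the $\geq 1/6$ lower bound from Theorem~\ref{pap} to force $\lambda(n)=\cdots=\lambda(n+h)=+$ (or all $-$) with probability bounded below uniformly in $h$, and contradict Theorem~\ref{mrt}. Your explicit decomposition by the first bad shift $k_0$ is just a more verbose version of the paper's ``iterate the a.a.s.\ implication finitely many times,'' both relying only on finite additivity.
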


\begin{proof}  We show this for the sign pattern ${\stackrel{\vee}{+}}{+}{-}$ only, as the other three cases are similar.  Suppose for contradiction that $n \not \mapsto {\stackrel{\vee}{+}}{+}{-}$ a.a.s..  In particular, $n \mapsto {\stackrel{\vee}{+}}{+}$ implies $n+1 \mapsto {\stackrel{\vee}{+}}{+}$ a.a.s..  Iterating this (using the translation invariance from Lemma \ref{linear}), we see that for any fixed $h \geq 1$, $n \mapsto {\stackrel{\vee}{+}}{+}$ implies that $\lambda(n) =\lambda(n+1)=\dots =\lambda(n+h)=+1$ a.a.s., and hence by Theorem \ref{pap}, one has $\lambda(n) =\lambda(n+1)=\dots =\lambda(n+h)=+1$ with asymptotic probability at least $c$ for some $c>0$ independent of $h$.  But this contradicts Theorem \ref{mrt} if $h$ is chosen sufficiently large.
\end{proof}

The same argument, in combination with the remaining $k=3$ cases of Theorem \ref{newmain} that we will handle shortly, show that the $k=4$ case of Theorem \ref{newmain} holds for the sign patterns ${\stackrel{\vee}{+}}{+}{+}{-}$, ${\stackrel{\vee}{-}}{+}{+}{+}$, ${\stackrel{\vee}{-}}{-}{-}{+}$, and ${\stackrel{\vee}{+}}{-}{-}{-}$.  However, there does not seem to be a similarly simple argument to handle the remaining twelve sign patterns of length $4$.  For longer patterns, we have the following partial results (the first of which relies on Theorem \ref{newmain}).  For technical reasons, these partial results seem to be limited to the category of positive upper density rather than positive lower density.

\begin{proposition}  Let $k \geq 3$.  
\begin{itemize}
\item[(i)] There are at least $k+5$ sign patterns $\epsilon_1 \dots \epsilon_k \in \{-,+\}^k$ such that $n \mapsto {\stackrel{\vee}{\epsilon_1}}\dots{\epsilon_k}$ occurs with positive upper density.
\item[(ii)] If $k$ is even, then there exists a sign pattern $\epsilon_1 \dots \epsilon_k \in \{-,+\}^k$ with exactly $k/2$ $+$ signs and $k/2$ $-$ signs such that $n \mapsto {\stackrel{\vee}{\epsilon_1}}\dots{\epsilon_k}$ occurs with positive upper density.
\end{itemize}
\end{proposition}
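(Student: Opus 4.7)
The plan is to handle (i) and (ii) by separate arguments. Let $S_k\subset\{-,+\}^k$ denote the collection of length-$k$ sign patterns $P=(\epsilon_1,\dots,\epsilon_k)$ for which the set $\{n:(\lambda(n),\dots,\lambda(n+k-1))=P\}$ has positive upper density; the assertion of (i) is exactly $|S_k|\ge k+5$. I would prove this by induction on $k$, with the base case $k=3$ furnished by Theorem~\ref{newmain}, which gives all eight length-three patterns with positive lower (hence upper) density, so $|S_3|=8=3+5$. For $k\ge 4$, since each $P\in S_{k-1}$ admits at least one extension in $S_k$ by subadditivity of upper density applied to the splitting by last letter, one has $|S_k|\ge|S_{k-1}|$ for free; it thus suffices to exhibit a single $P\in S_{k-1}$ whose \emph{both} extensions lie in $S_k$.

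Suppose no such $P$ exists. Then each $P\in S_{k-1}$ admits a unique extension $\phi(P)\in\{+,-\}$ with $P\phi(P)\in S_k$, and the shift rule $\mathrm{next}(P):=\sigma(P\phi(P))$ (where $\sigma$ drops the first letter) is a well-defined self-map of $S_{k-1}$: indeed, for any $n$ with $(\lambda(n),\dots,\lambda(n+k-2))=P$, translating by one shows $(\lambda(n+1),\dots,\lambda(n+k-1))=\mathrm{next}(P)$ outside an upper-density-zero set. Since $S_{k-1}$ is finite, every orbit under $\mathrm{next}$ is eventually periodic with some period $d\le|S_{k-1}|$. Iterating the deterministic extension rule finitely often, one concludes that for any fixed $M$ there is a positive upper density set $A$ of $n$'s on which $(\lambda(n+i))_{I_0\le i\le M}$ coincides with a shift of a fixed cycle $e\colon\Z/d\to\{\pm1\}$, with shift depending only on $n\bmod d$ (as one sees by comparing the periodic structure on overlaps of windows for distinct $n_1<n_2\in A$). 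Hence $\lambda(m)=\tilde e(m\bmod d)$ for $m$ in the positive upper density set $U:=\bigcup_{n\in A}[n+I_0,n+M]$, for some fixed $\tilde e\colon\Z/d\to\{\pm1\}$. One then combines Theorem~\ref{mrt} (forcing $e$ to have mean zero, hence $d$ even), Theorem~\ref{mrt-3} at $\chi_3$ (which when $3\mid d$ forces balance of $e$ on each residue class modulo $3$), and the complete multiplicativity of $\lambda$: PNT in arithmetic progressions modulo $d$ combined with $\lambda(p)=-1$ on primes $p\in U$ forces $\tilde e(r)=-1$ for every residue $r$ coprime to $d$, whereas a parallel counting argument shows that prime squares $p^2$ (for which $\lambda(p^2)=+1$) with $p\equiv\pm 1\pmod d$ also occur in $U$ and contribute $\tilde e(1)=+1$, producing the desired contradiction.

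For (ii), I would fix $\eps>0$ sufficiently small depending on $k$ (say $\eps<1/(100k)$) and then $h$ sufficiently large depending on $\eps$, and apply Theorem~\ref{mrt} so that $\bigl|\sum_{j=0}^{h}\lambda(n+j)\bigr|\le\eps h$ holds for $n$ in a set of asymptotic probability $\ge 1-\eps$, hence of positive upper density. For such $n$, introduce the sliding window $T(m):=\sum_{i=0}^{k-1}\lambda(m+i)$ for $m\in[n,n+h-k]$; since $k$ is even, $T(m)$ is even-valued and $T(m+1)-T(m)=\lambda(m+k)-\lambda(m)\in\{-2,0,2\}$. If $T(m)\ge 2$ throughout, then $\sum_m T(m)\ge 2(h-k+1)$, whereas interchanging summation expresses $\sum_m T(m)$ as $k$ times a bulk partial sum of $\lambda$ plus $O(k^2)$, hence $O(\eps kh+k^2)$---contradicting the lower bound once $h\gg k^2$. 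The symmetric case $T(m)\le-2$ throughout fails likewise, so by the discrete intermediate value theorem there is some $m\in[n,n+h-k]$ with $T(m)=0$. Each such $m$ arises from at most $h-k+1$ values of $n$, so $\{m:T(m)=0\}$ has positive upper density, and pigeonholing over the $\binom{k}{k/2}$ balanced length-$k$ sign patterns yields one such pattern realized with positive upper density. The hard part will be the final contradiction in (i): neither Theorem~\ref{mrt} nor Theorem~\ref{mrt-3} alone rules out $\lambda$ coinciding with a periodic function on a positive upper density set (for instance, when $\gcd(d,3)=1$ the $\chi_3$-twisted correlation vanishes for trivial reasons), so the argument must genuinely exploit complete multiplicativity of $\lambda$ at prime powers together with the explicit values $\lambda(p)=-1$ and $\lambda(p^2)=+1$, plus a careful counting to ensure that prime squares actually hit the set $U$.
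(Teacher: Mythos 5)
Your argument for part (ii) is correct and is essentially the contrapositive of the paper's: both use the observation that the sliding window $T(m)=\sum_{i=0}^{k-1}\lambda(m+i)$ is even-valued with jumps in $\{-2,0,2\}$, but you start from a small long-range partial sum (Theorem~\ref{mrt}) and run a discrete intermediate value theorem to locate a zero of $T$, whereas the paper assumes $T\neq 0$ a.a.s., deduces constancy of sign, and forces a large partial sum contradicting Theorem~\ref{mrt}. Your version even avoids the Banach-limit reduction, which is a mild simplification.

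Your argument for part (i), however, has a genuine gap in the finishing move, and the gap arises precisely because you missed the paper's key trick. The induction setup and the introduction of the shift map $\mathrm{next}$ are the same as the paper's, and you correctly deduce that outside a set of zero upper density the window at $n+1$ is a deterministic function of the window at $n$. But from there you try to show that $\lambda(m)=\tilde e(m\bmod d)$ for a \emph{single} $\tilde e$ on a positive-upper-density set $U$ built from windows $[n+I_0,n+M]$, and then force a contradiction via primes and prime squares landing in $U$. Two problems: first, the phase of the cycle at $n_1$ and at $n_2$ need not be compatible when the corresponding windows do not overlap, so there is no single $\tilde e$ without further argument. Second, and more fatally, the prime/prime-square step requires that $U$ actually contain primes (resp.\ prime squares) in the needed residue classes; a set of positive upper density built out of long intervals around sparse base points $n$ can perfectly well avoid all primes, so neither $\lambda(p)=-1$ nor $\lambda(p^2)=+1$ can be invoked at any specific residue. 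The paper sidesteps all of this: from eventual periodicity of the shift map one gets a fixed $q$ with $\lambda(n+q)=\lambda(n)$ a.a.s.\ (equivalently, outside a zero-upper-density set); then by Lemma~\ref{linear} one restricts to $n$ a multiple of $q$, getting $\lambda(qm+q)=\lambda(qm)$ a.a.s.\ in $m$, and complete multiplicativity gives $\lambda(q)\lambda(m+1)=\lambda(q)\lambda(m)$, i.e.\ $\lambda(m+1)=\lambda(m)$ a.a.s.\ --- which contradicts the $k=2$ result (Theorem~\ref{pap}). You should replace your prime-counting endgame with this ``divide by $q$'' reduction to $k=2$.
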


\begin{proof}  We claim that it suffices to establish (i) and (ii) with ``positive upper density'' replaced by ``positive asymptotic probability''.  Suppose for instance that the claim (i) failed, then there is a subset $S$ of $\{-,+\}^k$ consisting of at most $k+4$ sign patterns such that $n \mapsto {\stackrel{\vee}{\epsilon_1}}\dots{\epsilon_k}$ occurs with zero upper density for any $\epsilon_1 \dots \epsilon_k$ outside of $S$.  As upper density is subadditive (in contrast to lower density, which is superadditive), we conclude that outside of a set of zero upper density (and hence zero lower density), one has $n \mapsto {\stackrel{\vee}{\epsilon_1}}\dots{\epsilon_k}$ for some sign pattern in $S$.  Taking Banach limits as before, we can locate a sequence $x_i$ and a Banach limit such that $n \mapsto {\stackrel{\vee}{\epsilon_1}}\dots{\epsilon_k}$ for some sign pattern in $S$ a.a.s..  Taking contrapositives, we see that (i) follows from the asymptotic probability version of (i).  The argument for (ii) is similar and is left to the reader.

We now prove (i) (with ``positive upper density'' replaced by ``positive asymptotic probability'') by induction on $k$.  The case $k=3$ follows from Theorem \ref{newmain}.  Now suppose for sake of contradiction that $k \geq 4$, and that  $n \mapsto {\stackrel{\vee}{\epsilon_1}}\dots{\epsilon_{k-1}}$ with positive asymptotic probability for at least $k+4$ sign patterns of length $k-1$, but that  $n \mapsto {\stackrel{\vee}{\epsilon_1}}\dots{\epsilon_{k}}$ with positive asymptotic probability for at most $k+4$ sign patterns of length $k$.  Since each sign pattern of length $k-1$ that occurs with positive asymptotic probability has to have at least one extension to a sign pattern of length $k$ that occurs with positive asymptotic probability, we conclude that there must exist a function $\epsilon_{k}: \{-,+\}^{k-1} \to \{-,+\}$ such that
$$ n \mapsto {\stackrel{\vee}{\epsilon_1}}\dots{\epsilon_{k-1}} \implies n \mapsto {\stackrel{\vee}{\epsilon_1}}\dots{\epsilon_{k-1}}{\epsilon_{k}(\epsilon_1,\dots,\epsilon_{k-1})}$$
occurs a.a.s..  The map $(\epsilon_1,\dots,\epsilon_{k-1}) \mapsto (\epsilon_2,\dots,\epsilon_{k-1},\epsilon_{k}(\epsilon_1,\dots,\epsilon_{k-1}))$ on the finite set $\{-,+\}^{k-1}$ is clearly periodic, and hence on iterating the above implication (and using the translation invariance provided by Lemma \ref{linear}) there exists a natural number $q$ such that
$$ \lambda(n+q) = \lambda(n) $$
a.a.s..  By Lemma \ref{linear} this implies that $\lambda(qn+q)=\lambda(qn)$ a.a.s., and hence by multiplicativity $\lambda(n+1)=\lambda(n)$ a.a.s., but this contradicts Theorem \ref{pap}.

Now we prove (ii).  By the pigeonhole principle, the claim is equivalent to showing that
$$ \sum_{i=0}^{k-1} \lambda(n+i) = 0$$
with positive asymptotic probability.  Suppose to the contrary that
$$ \sum_{i=0}^{k-1} \lambda(n+i) \neq 0$$
a.a.s..  Observe that the sum $\sum_{i=0}^{k-1} \lambda(n+i)$ is always even, and changes by at most $2$ when one advances $n$ to $n+1$.  In particular, the sum $\sum_{i=0}^{k-1} \lambda(n+i)$ does not change sign a.a.s. when advancing from $n$ to $n+1$.  Iterating this observation (using the translation invariance provided by Lemma \ref{linear}), we conclude that for any natural number $H$, it is a.a.s. true that the sums $\sum_{i=0}^{k-1} \lambda(n+i+h)$ for $h=0,\dots,H-1$ are all either at least $+2$, or at most $-2$.  In particular, on summing in $h$, we conclude that the expression
$$ k \sum_{i=0}^{k+H-1} \lambda(n+i) $$
is at least $2H - 2k$, or at most $-2H + 2k$, a.a.s..  But this contradicts Theorem \ref{mrt} if $H$ is large enough.
\end{proof}

Returning to Theorem \ref{newmain},
it remains to verify the $k=3$ cases for the sign patterns $n \mapsto {\stackrel{\vee}{+}}{+}{+}$, $n \mapsto {\stackrel{\vee}{-}}{-}{-}$, $n \mapsto {\stackrel{\vee}{+}}{-}{+}$, $n \mapsto {\stackrel{\vee}{-}}{+}{-}$.  We now address these cases in the next two sections.

\section{The ${\stackrel{\vee}{+}}{+}{+}$ case}

We now verify Theorem \ref{newmain} for the sign pattern $n \mapsto {\stackrel{\vee}{+}}{+}{+}$.  The same argument (flipping all the signs) applies to the sign pattern $n \mapsto {\stackrel{\vee}{-}}{-}{-}$, and is left to the reader\footnote{Alternatively, one can observe that the only properties of $\lambda$ used in these arguments are those given by Proposition \ref{pnt}, Theorem \ref{mrt}, Theorem \ref{mrt-3}, that $\lambda$ takes values in $\{-1,+1\}$, and that $\lambda(pn)=-\lambda(n)$ for all $n$ and $p=2,3,5$.  All of these properties hold with $\lambda$ replaced by $-\lambda$, and so the arguments in this section applied to $-\lambda$ handle the sign pattern $n \mapsto {\stackrel{\vee}{-}}{-}{-}$.  Similarly for the arguments in the next section.}.

Throughout this section we assume for sake of contradiction that Theorem \ref{newmain} fails for $n \mapsto {\stackrel{\vee}{+}}{+}{+}$.  Thus we have

\begin{hypothesis}\label{po} We have $n \not \mapsto {\stackrel{\vee}{+}}{+}{+}$ a.a.s..
\end{hypothesis}

As remarked in the introduction, Hypothesis \ref{po} is morally compatible with $\lambda$ ``pretending'' to be like the Dirichlet character $\chi_3$ of conductor $3$, defined by setting $\chi_3(3n+1)=+1$, $\chi_3(3n+2)=-1$, $\chi_3(3n)=0$ for any $n$.  

The strategy will be to leverage Hypothesis \ref{po}, together with Lemma \ref{linear} and the multiplicative nature of $\lambda$, to force $\lambda$ to behave increasingly like $\chi_3$ in various senses, until we can use Theorem \ref{mrt-3} to obtain a contradiction.

We first give some simple consequences of Hypothesis \ref{po} and Lemma \ref{linear}.

\begin{corollary}\label{easy}  Asymptotically almost surely in $n$, the following claims hold.
\begin{itemize}
\item[(a)]  If $n \mapsto {\stackrel{\vee}{+}}{\ast}{+}$, then $n \mapsto {\stackrel{\vee}{+}}{-}{+}$.
\item[(a.2)]  If $2n \mapsto {\stackrel{\vee}{-}}{\ast}{\ast}{\ast}{-}$, then $2n \mapsto {\stackrel{\vee}{-}}{\ast}{+}{\ast}{-}$.
\item[(b)]  If $n \mapsto {\stackrel{\vee}{+}}{+}$, then $n \mapsto {-}{\stackrel{\vee}{+}}{+}{-}$.
%\item[(b.2)]  If $2n \mapsto \stackrel{\vee}-\ast-$, then $2n \mapsto +\ast\stackrel{\vee}-\ast-\ast+$.
\item[(b.3)]  If $3n \mapsto {\stackrel{\vee}{-}}{\ast}{\ast}{-}$, then $3n \mapsto {+}{\ast}{\ast}{\stackrel{\vee}{-}}{\ast}{\ast}{-}{\ast}{\ast}{+}$.
%\item[(b.4)]  If $4n \mapsto \stackrel{\vee}+\ast\ast\ast+$, then $4n \mapsto -\ast\ast\ast\stackrel{\vee}+\ast\ast\ast+\ast\ast\ast-$.
\item[(b.5)]  If $5n \mapsto {\stackrel{\vee}{-}}{\ast}{\ast}{\ast}{\ast}{-}$, then $5n \mapsto {+}{\ast}{\ast}{\ast}{\ast}{\stackrel{\vee}{-}}{\ast}{\ast}{\ast}{\ast}{-}{\ast}{\ast}{\ast}{\ast}{+}$.
\item[(c)]  If $3n \mapsto {\stackrel{\vee}{\ast}}{+}{+}$, then $3n \mapsto {+}{\ast}{\ast}{\stackrel{\vee}{-}}{+}{+}{-}{\ast}{\ast}{+}$.
\end{itemize}
\end{corollary}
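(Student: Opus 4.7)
The plan is to derive all six claims from just Hypothesis \ref{po}, the complete multiplicativity identity $\lambda(pm) = -\lambda(m)$ at the small primes $p = 2, 3, 5$, the translation and dilation invariance supplied by Lemma \ref{linear}, and finite additivity of asymptotic probability. Since only finitely many implications are asserted, it suffices to verify each a.a.s.\ separately and intersect the resulting full-probability events.

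Part (a) is immediate from Hypothesis \ref{po}: under the hypothesis $\lambda(n) = \lambda(n+2) = +1$, failure of the conclusion $\lambda(n+1) = -1$ would (since $\lambda \in \{\pm 1\}$) force $n \mapsto {\stackrel{\vee}{+}}{+}{+}$, the a.a.s.\ forbidden pattern. Part (b) applies Hypothesis \ref{po} at the two translates $n-1$ and $n$: if $\lambda(n) = \lambda(n+1) = +1$, then $\lambda(n-1) = +1$ would give the forbidden pattern at $n-1$, and $\lambda(n+2) = +1$ would give it at $n$. Both translated instances are a.a.s.\ true by the $q=1$ translation case of Lemma \ref{linear}, and are combined by finite additivity.

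Parts (a.2), (b.3) and (b.5) are literal rewrites of (a), (b), (b) once one substitutes $\lambda(pn + pr) = -\lambda(n+r)$ for $p \in \{2,3,5\}$: in (b.3), for instance, the hypothesis $\lambda(3n) = \lambda(3n+3) = -1$ becomes $\lambda(n) = \lambda(n+1) = +1$ and the desired conclusion $\lambda(3n-3) = \lambda(3n+6) = +1$ becomes $\lambda(n-1) = \lambda(n+2) = -1$, which is exactly (b). Part (c) is the only genuinely composite step: setting $m := 3n+1$, its hypothesis reads $\lambda(m) = \lambda(m+1) = +1$, so (b) applied at $m$ yields $\lambda(3n) = \lambda(3n+3) = -1$; multiplicativity at $3$ converts these to $\lambda(n) = \lambda(n+1) = +1$; a second application of (b), now at $n$, produces $\lambda(n-1) = \lambda(n+2) = -1$; and multiplicativity once more delivers $\lambda(3n-3) = \lambda(3n+6) = +1$.

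The only subtlety is ensuring that an a.a.s.\ statement remains a.a.s.\ after a linear change of variable such as $n \mapsto 3n+1$ (used in (c), and implicitly in (a.2), (b.3), (b.5)). This is immediate from Lemma \ref{linear}: an event of asymptotic probability zero intersects each residue class in a set of asymptotic probability zero, so the rephrased statement still fails with probability zero. This is the closest thing to an obstacle in the corollary, and it is entirely formal; no input beyond Hypothesis \ref{po} and multiplicativity is required.
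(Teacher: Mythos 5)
Your proof is correct and follows essentially the same route as the paper: (a) is immediate from Hypothesis \ref{po}, (b) from two translated instances of the hypothesis, (a.2), (b.3), (b.5) are the multiplicativity rewrites of (a), (b), (b), and (c) chains (b) with (b.3) (which you simply unfold inline as ``(b), multiplicativity, (b), multiplicativity''). No material difference.
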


\begin{proof}  The claim (a) is immediate from Hypothesis \ref{po}.  As $\lambda(2m)=-\lambda(m)$ for all $m$, that the claim (a.2) is equivalent to (a).  

For (b), observe from Hypothesis \ref{po} and Lemma \ref{linear} that $n \not \mapsto {\stackrel{\vee}{+}}{+}{+}$ and $n \not \mapsto {+}{\stackrel{\vee}{+}}{+}$ a.a.s., giving the claim.  The multiplicativity properties $\lambda(3m)=-\lambda(m)$ and $\lambda(5m)=-\lambda(m)$ then give the claims (b.3) and (b.5) respectively.

Now we prove (c).  Suppose $3n \mapsto {\stackrel{\vee}{\ast}}{+}{+}$.  From (b) and Lemma \ref{linear}, we then have $3n \mapsto {\stackrel{\vee}{-}}{+}{+}{-}$ a.a.s..  The claim then follows from (b.3).
\end{proof}

The next implication is essentially due to Hildebrand \cite{hil}.

\begin{proposition}\label{15n}  One has $15 n \not \mapsto {+}{\stackrel{\vee}{\ast}}{+}$ a.a.s..
\end{proposition}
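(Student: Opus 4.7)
Suppose for contradiction that the event $E=\{\lambda(15n-1)=\lambda(15n+1)=+1\}$ has positive asymptotic probability. My first move is to apply Corollary~\ref{easy}(a) at $m=15n$: since $\lambda((15n-1))=\lambda((15n-1)+2)=+1$, we get $\lambda(15n)=-1$ a.a.s.\ on $E$. Because $\lambda$ is completely multiplicative with $\lambda(15)=\lambda(3)\lambda(5)=1$, this in turn forces $\lambda(n)=-1$, $\lambda(3n)=+1$, $\lambda(5n)=+1$ a.a.s.\ on $E$. Thus on $E$ the central block $(\lambda(15n-1),\lambda(15n),\lambda(15n+1))$ is pinned to $(+1,-1,+1)$, and analogous values appear on the $3$- and $5$-dilates.

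The next stage is to extend this block outward by conditioning on $\lambda(15n+2)$ (and, symmetrically, $\lambda(15n-2)$); at least one of the subcases retains positive asymptotic probability. In the subcase $\lambda(15n+2)=+1$, Corollary~\ref{easy}(b) at $m=15n+1$ gives $\lambda(15n+3)=-1$, and Corollary~\ref{easy}(c) at $3m=15n$ gives $\lambda(15n-3)=+1$ and $\lambda(15n+6)=+1$. Translating via $\lambda(3k)=-\lambda(k)$ yields the forced pattern $(\lambda(5n-1),\lambda(5n),\lambda(5n+1),\lambda(5n+2))=(-1,+1,+1,-1)$. The subcase $\lambda(15n+2)=-1$, once further split on $\lambda(15n-2)$, is handled analogously using (a.2) and (b.3) together with (b).

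Inside each surviving branch, the newly forced block around $5n$ once again satisfies the hypotheses of the implications in Corollary~\ref{easy}: e.g.\ $\lambda(5n)=\lambda(5n+1)=+1$ triggers (b) at $5n$, and, after using Lemma~\ref{linear} to fix the residue of $n$ modulo $30$, triggers (c), (b.3), or (b.5) at $5n$ (or at a neighbour). This generates a further layer of forced values at $5n\pm k$, and the same procedure applied again to those values produces yet another layer. Iterating, I would build a forced sign pattern of $\lambda$ covering a prescribed arithmetic progression of length $H$ contained in a bounded-length neighbourhood of $15n$, valid on a set of positive asymptotic probability.

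To close, I observe that the pattern built in this way is fully determined by the residue of $n$ modulo $30$ and by the initial block, so that on the progression in question $\lambda(m)\chi_3(m)$ is forced to a single constant sign. Taking $H$ large in terms of a given $\varepsilon>0$ (with the residue class fixed by Lemma~\ref{linear}), this contradicts Theorem~\ref{mrt-3} applied with $\chi=\chi_3$ (and, for the residues modulo $3$ on which $\chi_3$ vanishes, Theorem~\ref{mrt} applied directly). I expect the principal obstacle to be the bookkeeping of the iteration in the third step: each conditioning introduces new sub-branches, and one must verify that every branch is compatible with the multiplicativity relations at $2,3,5$ and eventually produces a forced chain of the required length, essentially the combinatorial core of Hildebrand's argument.
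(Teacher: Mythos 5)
Your opening moves agree with the paper's: you apply Corollary~\ref{easy}(a) at $15n$ to pin $\lambda(15n)=-1$, and your bookkeeping of the $\lambda(15n+2)=+1$ subcase (using (b) at $15n+1$ and (c) at $3m=15n$ to force $(\lambda(5n-1),\lambda(5n),\lambda(5n+1),\lambda(5n+2))=(-1,+1,+1,-1)$) is correct. From there, however, your argument diverges from the paper's and contains a genuine gap, which you yourself flag as ``the principal obstacle.''

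The gap is that the iteration in your third paragraph is not a demonstration but a hope. The forced block around $5n$ does not, by itself, re-trigger the implications of Corollary~\ref{easy} with \emph{new} information: applying (b) at $5n$ only reproduces the block you already have, and (c), (b.3), (b.5) require $5n$ (or a nearby integer) to carry divisibility by $3$ or $5$ that is not guaranteed --- $15n$ is a multiple of $15$, not of $45$ or $75$, so $n$ may be coprime to $15$. Worse, passing from $n$ to $5n$ relocates the forced block to a far-away region rather than extending a pattern ``in a bounded-length neighbourhood of $15n$,'' and dilating the new block back by $3$ simply returns you to the vicinity of $15n$, so the iteration makes no forward progress. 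Your closing claim --- that the iteration forces $\lambda\chi_3$ to be constant on a long arithmetic progression, whence a contradiction with Theorem~\ref{mrt-3} --- is essentially the conclusion of the \emph{entire} Section~3 of the paper, which is reached only after Proposition~\ref{15n} is fed through Corollary~\ref{15c}, Propositions~\ref{6n}, \ref{9a}, \ref{9b}, and Corollaries~\ref{3n}, \ref{3p}, \ref{312}, \ref{312b}. You cannot assume this conclusion in proving the lemma that starts the chain.

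The device you are missing, and which the paper's proof hinges on, is the passage to $60n$. Since $\lambda(4m)=\lambda(m)$, the pattern $15n\mapsto {+}\stackrel{\vee}{-}{+}$ transports to $60n\mapsto {+}{\ast}{\ast}{\ast}\stackrel{\vee}{-}{\ast}{\ast}{\ast}{+}$, placing constraints at spacings $\pm4$ around a point that is simultaneously divisible by $3$, $4$, and $5$. The contrapositive of (c) (applied once on each side) then forces $\lambda(60n+5)=-1$ and $\lambda(60n-5)=-1$; but (b.5) applied at $5m=60n$ with $\lambda(60n)=\lambda(60n+5)=-1$ forces $\lambda(60n-5)=+1$, a direct sign contradiction. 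This is a short, local argument; there is no need to build up a long forced pattern, and you should look for such a local collision rather than an unbounded iteration.
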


\begin{proof}  We will restrict to the event $15 n \mapsto {+}{\stackrel{\vee}{\ast}}{+}$ and show that this leads a.a.s. to a contradiction, giving the claim.

By hypothesis and Corollary \ref{easy}(a) we have $15n \mapsto {+}{\stackrel{\vee}{-}}{+}$ a.a.s..  Since $\lambda(4m)=\lambda(m)$, we thus have
$$ 60n \mapsto {+}{\ast}{\ast}{\ast}{\stackrel{\vee}{-}}{\ast}{\ast}{\ast}{+}$$
a.a.s..  Clearly this implies
$$ 60n \not \mapsto {\stackrel{\vee}{+}}{\ast}{\ast}{-}{+}{+}{-}{\ast}{\ast}{+}$$
so by Corollary \ref{easy}(c) in the contrapositive (and Lemma \ref{linear}) we have
$$ 60n \not \mapsto {\stackrel{\vee}{\ast}}{\ast}{\ast}{\ast}{+}{+}$$
and thus
$$ 60n \mapsto {+}{\ast}{\ast}{\ast}{\stackrel{\vee}{-}}{\ast}{\ast}{\ast}{+}{-}$$
a.a.s..  A similar argument gives
$$ 60n \not \mapsto {+}{+}{\ast}{\ast}{\ast}{\stackrel{\vee}{\ast}}$$
and thus
$$ 60n \mapsto {-}{+}{\ast}{\ast}{\ast}{\stackrel{\vee}{-}}{\ast}{\ast}{\ast}{+}{-}$$
a.a.s..  But from Corollary \ref{easy}(b.5) and Lemma \ref{linear}, we must then have
$$ 60n \mapsto {+}{\ast}{\ast}{\ast}{\ast}{\stackrel{\vee}{-}}{\ast}{\ast}{\ast}{\ast}{-}$$
a.a.s., giving the desired contradiction since $\lambda(60n-5)$ cannot simultaneously be $+1$ and $-1$.  
\end{proof}

This gives us our first step towards demonstrating that $\lambda$ ``pretends to be like'' $\chi_3$:

\begin{corollary}\label{15c}  We have $\lambda(15n+1)=-\lambda(15n-1)$ a.a.s..
\end{corollary}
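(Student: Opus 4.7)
The plan is to deduce Corollary~\ref{15c} from Proposition~\ref{15n} together with the equidistribution statement in Proposition~\ref{pnt}, with no further multiplicative input. Since $\lambda$ takes values in $\{-1,+1\}$, the conclusion $\lambda(15n+1) = -\lambda(15n-1)$ a.a.s.\ is equivalent to the statement that both of the joint events
\[
\lambda(15n-1) = \lambda(15n+1) = +1 \qquad \text{and} \qquad \lambda(15n-1) = \lambda(15n+1) = -1
\]
occur with asymptotic probability zero. The first of these is precisely $15n \mapsto {+}{\stackrel{\vee}{\ast}}{+}$, which is ruled out a.a.s.\ by Proposition~\ref{15n}.

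To handle the second case (``${-}{\stackrel{\vee}{\ast}}{-}$''), my approach is to read off the required vanishing from the marginal distributions, without re-running the whole chain of implications in Proposition~\ref{15n} with opposite signs (which would be awkward, since $-\lambda$ is not multiplicative). Denote by $p_{\epsilon\delta}$ the asymptotic probability that $(\lambda(15n-1),\lambda(15n+1)) = (\epsilon,\delta)$ for $\epsilon,\delta\in\{+,-\}$. By Proposition~\ref{pnt} combined with Lemma~\ref{linear}, the events $\lambda(15n-1) = \pm 1$ and $\lambda(15n+1) = \pm 1$ each occur with asymptotic probability $1/2$, which by finite additivity of asymptotic probability yields the four marginal identities
\[
p_{++} + p_{+-} \;=\; p_{-+} + p_{--} \;=\; p_{++} + p_{-+} \;=\; p_{+-} + p_{--} \;=\; \tfrac{1}{2}.
\]
Subtracting the first identity from the last forces $p_{++} = p_{--}$. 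Thus the vanishing of $p_{++}$ supplied by Proposition~\ref{15n} automatically gives $p_{--} = 0$ as well, which is exactly what we need.

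There is no serious obstacle in this step; the main conceptual point is that the $+/-$ symmetry of the marginals of $\lambda$ transfers the exclusion of the $(+,+)$ pattern to the $(-,-)$ pattern without invoking any new structural input. The only things to check are that Lemma~\ref{linear} applies to the two linear forms $15n-1$ and $15n+1$ and that Proposition~\ref{pnt} indeed gives the marginal probabilities $1/2$ for each sign — both of which are immediate.
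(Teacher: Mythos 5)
Your proof is correct and is essentially the paper's own argument. The paper appeals to ``inclusion-exclusion'' — $\P(\lambda(15n+1)=+1)=\P(\lambda(15n-1)=+1)=\tfrac12$ and $\P(\text{both}=+1)=0$ forces $\P(\text{at least one}=+1)=1$, hence exactly one is $+1$ a.a.s. — which is the same finite-additivity computation you carry out via the four joint probabilities $p_{\epsilon\delta}$; your marginal identities forcing $p_{++}=p_{--}$ are just the explicit form of that bookkeeping, and your worry about re-running Proposition~\ref{15n} with signs flipped is unfounded since neither route requires it.
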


\begin{proof}  By Proposition \ref{pnt}, we have $\lambda(15n+1)=+1$ and $\lambda(15n-1)=+1$ with an asymptotic probability of $1/2$, while from Proposition \ref{15n} we have $\lambda(15n+1)=\lambda(15n-1)=+1$ with asymptotic probability zero.  The claim then follows from the inclusion-exclusion principle.
\end{proof}

A variant of the above arguments gives

\begin{proposition}\label{6n}  One has $6n \not \mapsto {-}{\stackrel{\vee}{+}}{-}$ a.a.s..
\end{proposition}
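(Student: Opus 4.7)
The plan is to assume for contradiction that $6n \mapsto {-}{\stackrel{\vee}{+}}{-}$ holds on a positive-probability event $E$ and derive a violation of Corollary \ref{15c}. The strategy mirrors the proof of Proposition \ref{15n}: push the hypothesis out to $30n$ via the multiplicative identity $\lambda(5m) = -\lambda(m)$, fire the contrapositive of Corollary \ref{easy}(c) twice using $\lambda(30n) = -1$ as a witness, then descend via $\lambda(2m) = -\lambda(m)$ to a Hildebrand-style triple at $15n$ that collides with Corollary \ref{15c}.

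Concretely, on $E$ we have $\lambda(6n-1) = \lambda(6n+1) = -1$ and $\lambda(6n) = +1$, so multiplying by $5$ gives
$$ 30n \mapsto {+}{\ast}{\ast}{\ast}{\ast}{\stackrel{\vee}{-}}{\ast}{\ast}{\ast}{\ast}{+} $$
a.a.s. on $E$. I then apply Corollary \ref{easy}(c) in the contrapositive at the two points $3m = 30n - 6$ and $3m = 30n + 3$: in each case, the extended conclusion of (c) includes the claim $\lambda(30n) = +1$, which contradicts the value $-1$ already established. The other half of the hypothesis of (c) at these points reads $\lambda(30n - 5) = +1$ or $\lambda(30n + 5) = +1$, both of which are true on $E$, so the remaining half must fail a.a.s.: this yields $\lambda(30n \pm 4) = -1$ a.a.s.~on $E$.

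Descending back via $\lambda(2m) = -\lambda(m)$ applied to $30n = 2 \cdot 15n$ and $30n \pm 4 = 2(15n \pm 2)$ then gives
$$ \lambda(15n - 2) = \lambda(15n) = \lambda(15n + 2) = +1 $$
a.a.s.~on $E$. Applying Corollary \ref{easy}(a) (which is just Hypothesis \ref{po}) at the two points $15n - 2$ and $15n$ forces $\lambda(15n - 1) = \lambda(15n + 1) = -1$ a.a.s.~on $E$. But this contradicts Corollary \ref{15c}, which states that $\lambda(15n + 1) = -\lambda(15n - 1)$ a.a.s. Hence $E$ has asymptotic probability zero, as desired.

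The main obstacle is locating the correct chain of multiplicative and combinatorial moves: the tight ``$-+-$'' triple at $6n$ must be converted into constraints that force a sign-opposition contradiction elsewhere. The key insight is that inflating by $5$ brings us to $30n$, where $\lambda(30n) = -1$ provides a ready-made witness to fire Corollary \ref{easy}(c) twice (once on each side), producing $\lambda(30n \pm 4) = -1$; only these spread-out values, when contracted by a factor of $2$, yield the Hildebrand-style pattern at $15n - 2, 15n, 15n + 2$ that can be played against Corollary \ref{15c}.
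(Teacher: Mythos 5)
Your proof is correct and essentially identical to the paper's: both inflate the $6n$ triple by $5$ to $30n$, fire the contrapositive of Corollary \ref{easy}(c) twice (using $\lambda(30n)=-1$ as the witness) to obtain $\lambda(30n\pm4)=-1$, and then pass through a factor of $2$ and one application of Hypothesis \ref{po} on each side to land on $\lambda(15n\pm1)=-1$, contradicting Corollary \ref{15c}. The only cosmetic difference is that the paper applies Corollary \ref{easy}(a.2) at the $30n$ level and then descends by $2$, whereas you descend to $15n$ first and then apply (a) directly; since (a.2) is by definition (a) lifted through $\lambda(2m)=-\lambda(m)$, the two orderings are the same step.
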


\begin{proof}  As before, we restrict to the event $6n \mapsto {-}{\stackrel{\vee}{+}}{-}$ and show that this leads to a contradiction a.a.s..  

The multiplicativity property $\lambda(5m)=-\lambda(m)$ for all $m$ gives
$$ 30 n \mapsto {+}{\ast}{\ast}{\ast}{\ast}{\stackrel{\vee}{-}}{\ast}{\ast}{\ast}{\ast}{+}.$$
In particular,
$$ 30 n \not \mapsto {\stackrel{\vee}{+}}{\ast}{\ast}{-}{+}{+}{-}{\ast}{\ast}{+}$$
and hence from Corollary \ref{easy}(c) in the contrapositive (and Lemma \ref{linear}) we have
$$ 30 n \not \mapsto {\stackrel{\vee}{\ast}}{\ast}{\ast}{\ast}{+}{+}$$
and hence
$$ 30 n \mapsto {+}{\ast}{\ast}{\ast}{\ast}{\stackrel{\vee}{-}}{\ast}{\ast}{\ast}{-}{+}$$
a.a.s..  A similar argument gives
$$ 30 n \not \mapsto {+}{+}{\ast}{\ast}{\ast}{\stackrel{\vee}{\ast}}$$
and thus
$$ 30 n \mapsto {+}{-}{\ast}{\ast}{\ast}{\stackrel{\vee}{-}}{\ast}{\ast}{\ast}{-}{+}$$
a.a.s.  By two applications of Corollary \ref{easy}(a.2) and Lemma \ref{linear}, we then have
$$ 30 n \mapsto {+}{-}{\ast}{+}{\ast}{\stackrel{\vee}{-}}{\ast}{+}{\ast}{-}{+}$$
a.a.s., and hence since $\lambda(2m)=-\lambda(m)$
$$ \lambda(15n-1) = \lambda(15n+1) = -1,$$
but this contradicts Corollary \ref{15c} a.a.s..
\end{proof}

\begin{corollary}\label{3n} One has $3n \not \mapsto {-}{\stackrel{\vee}{-}}{-}$ a.a.s..
\end{corollary}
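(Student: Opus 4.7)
The plan is to deduce Corollary \ref{3n} directly from Proposition \ref{6n} by a short multiplicative reduction, arguing by contradiction. Suppose for a contradiction that $3n \mapsto {-}{\stackrel{\vee}{-}}{-}$ holds with positive asymptotic probability, and restrict attention to this event. The multiplicativity identity $\lambda(2k) = -\lambda(k)$ converts each of $\lambda(3n-1) = \lambda(3n) = \lambda(3n+1) = -1$ into its doubled-index counterpart, giving $\lambda(6n-2) = \lambda(6n) = \lambda(6n+2) = +1$. In particular, $\lambda$ takes the value $+1$ at all three of the evenly spaced positions $6n-2$, $6n$, $6n+2$, which is the key observation driving the argument.

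Next I would apply Corollary \ref{easy}(a), once at the index $6n-2$ and once at the index $6n$ (both applications are legitimate a.a.s.\ after the linear changes of variable provided by Lemma \ref{linear}), to pin down the intervening odd positions. The condition $\lambda(6n-2) = \lambda(6n) = +1$ forces $\lambda(6n-1) = -1$ a.a.s., and the condition $\lambda(6n) = \lambda(6n+2) = +1$ forces $\lambda(6n+1) = -1$ a.a.s. Combined with $\lambda(6n) = +1$, these say exactly that $6n \mapsto {-}{\stackrel{\vee}{+}}{-}$ holds with positive asymptotic probability.

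This directly contradicts Proposition \ref{6n}, completing the argument. I do not anticipate a real obstacle: the only conceptual step is the observation that doubling the index converts the hypothesised $---$ pattern at $3n$ into a triple of $+1$ values at $6n-2$, $6n$, $6n+2$, after which Corollary \ref{easy}(a) supplies the intermediate $-1$'s and Proposition \ref{6n} supplies the contradiction automatically.
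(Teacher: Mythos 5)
Your proof is correct and is essentially the paper's own argument: both restrict to the event $3n \mapsto {-}{\stackrel{\vee}{-}}{-}$, use $\lambda(2m)=-\lambda(m)$ to deduce $\lambda(6n-2)=\lambda(6n)=\lambda(6n+2)=+1$, apply Corollary \ref{easy}(a) (via Lemma \ref{linear}) twice to force $\lambda(6n-1)=\lambda(6n+1)=-1$, and conclude $6n \mapsto {-}{\stackrel{\vee}{+}}{-}$, contradicting Proposition \ref{6n}.
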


\begin{proof}  We restrict to the event that $3n \mapsto {-}{\stackrel{\vee}{-}}{-}$.  The multiplicativity property $\lambda(2m)=-\lambda(m)$ then gives
$$ 6n \mapsto {+}{\ast}{\stackrel{\vee}{+}}{\ast}{+}$$
and hence by two applications of Corollary \ref{easy}(a) and Lemma \ref{linear} we have
$$ 6n \mapsto {+}{-}{\stackrel{\vee}{+}}{-}{+}$$
a.a.s..  But this contradicts Proposition \ref{6n} a.a.s..
\end{proof}

\begin{corollary}\label{corp}  Asymptotically almost surely, we have $\lambda(6n-1)=-\lambda(6n+1)$.
\end{corollary}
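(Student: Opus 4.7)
The plan is to show that a.a.s.\ the pair $\lambda(6n-1), \lambda(6n+1)$ cannot both equal $-1$, and then bootstrap this via inclusion-exclusion (as in the proof of Corollary~\ref{15c}) to force exactly one of them to equal $-1$.

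First I would combine the two preceding results to rule out the sign pattern $6n \mapsto {-}{\stackrel{\vee}{\ast}}{-}$ a.a.s. Proposition~\ref{6n} already gives $6n \not\mapsto {-}{\stackrel{\vee}{+}}{-}$ a.a.s., so it remains to rule out $6n \mapsto {-}{\stackrel{\vee}{-}}{-}$. For this I would apply Corollary~\ref{3n} together with Lemma~\ref{linear}: substituting $n \mapsto 2n$ (the $q=2, r=0$ case) converts the a.a.s.\ statement $3n \not\mapsto {-}{\stackrel{\vee}{-}}{-}$ into $6n \not\mapsto {-}{\stackrel{\vee}{-}}{-}$. Since $\lambda(6n) \in \{-1,+1\}$, the finite additivity of asymptotic probability gives $6n \not\mapsto {-}{\stackrel{\vee}{\ast}}{-}$ a.a.s.; equivalently, the event $\{\lambda(6n-1) = -1 \text{ and } \lambda(6n+1) = -1\}$ has asymptotic probability zero.

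Next I would apply inclusion-exclusion. By Proposition~\ref{pnt}, each of the events $\lambda(6n-1) = -1$ and $\lambda(6n+1) = -1$ has asymptotic probability $\tfrac{1}{2}$. Since their intersection has asymptotic probability $0$ by the previous paragraph, additivity yields
\[
  P\bigl(\lambda(6n-1) = -1 \text{ or } \lambda(6n+1) = -1\bigr) = \tfrac{1}{2} + \tfrac{1}{2} - 0 = 1,
\]
so at least one of the two values equals $-1$ a.a.s. Combining with the fact that they are not simultaneously $-1$ a.a.s., I conclude that exactly one of them equals $-1$ a.a.s., which is precisely $\lambda(6n-1) = -\lambda(6n+1)$.

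There is no real obstacle here; the only point to be careful about is correctly invoking Lemma~\ref{linear} to transfer the statement of Corollary~\ref{3n} (indexed over $3n$) into a statement over $6n$, and ensuring that the finitely additive nature of asymptotic probability justifies both the union of two a.a.s.\ events and the inclusion-exclusion step. Both are routine given the framework set up in Section~\ref{asym}.
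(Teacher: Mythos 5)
Your proposal is correct and follows essentially the same route as the paper: rule out both $\lambda(6n-1)=\lambda(6n+1)=-1$ by splitting on $\lambda(6n)$ and invoking Proposition~\ref{6n} (for the $+$ case) and Corollary~\ref{3n} transferred via Lemma~\ref{linear} (for the $-$ case), then apply Proposition~\ref{pnt} and inclusion-exclusion. You have merely spelled out in more detail the step the paper compresses into ``From Proposition~\ref{6n}, Corollary~\ref{3n}, and Lemma~\ref{linear} we see that $\lambda(6n-1)=\lambda(6n+1)=-1$ holds with asymptotic probability zero.''
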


\begin{proof}  By Proposition \ref{pnt},  we have $\lambda(6n-1)=-1$ and $\lambda(6n+1)=-1$ with an asymptotic probability of $1/2$ each. From Proposition \ref{6n}, Corollary \ref{3n}, and Lemma \ref{linear} we see that $\lambda(6n-1)=\lambda(6n+1)=-1$ holds with asymptotic probability zero.  The claim then follows from the inclusion-exclusion principle.
\end{proof}

Next, we work on improving Corollary \ref{easy}(c).

\begin{proposition}\label{9a}  One has $9n+3 \not \mapsto {\stackrel{\vee}{\ast}}{+}{+}$ a.a.s..
\end{proposition}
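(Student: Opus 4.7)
The plan is to argue by contradiction. Assume the event $E := \{9n+3 \mapsto {\stackrel{\vee}{\ast}}{+}{+}\}$, i.e.\ $\lambda(9n+4) = \lambda(9n+5) = +1$, has positive asymptotic probability, and restrict the remainder of the argument to $E$. Since $9n+3 = 3(3n+1)$ is a multiple of $3$, Corollary~\ref{easy}(c) applies directly, yielding, a.a.s.\ on $E$,
$$9n+3 \mapsto {+}{\ast}{\ast}{\stackrel{\vee}{-}}{+}{+}{-}{\ast}{\ast}{+}.$$
In particular $\lambda(9n) = \lambda(9n+9) = +1$ and $\lambda(9n+3) = \lambda(9n+6) = -1$.

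Using the identities $\lambda(9m) = \lambda(m)$ and $\lambda(3m) = -\lambda(m)$, the four sign values at $9n$, $9n+3$, $9n+6$, $9n+9$ translate into the simultaneous constraints $\lambda(n) = \lambda(n+1) = +1$ and $\lambda(3n+1) = \lambda(3n+2) = +1$. Corollary~\ref{easy}(b) applied in turn at $n$ and at $3n+1$ then gives $\lambda(n-1) = \lambda(n+2) = -1$ and $\lambda(3n) = \lambda(3n+3) = -1$, the latter being consistent with multiplicativity and with Corollary~\ref{easy}(b.3) applied at $3n$. Thus $E$ forces a rigid sign pattern around both $n$ and $3n$.

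To close the argument I would mimic the strategy of Proposition~\ref{6n}: scale the forced pattern by a small factor (e.g.\ $20$ or $60$) using $\lambda(4m) = \lambda(m)$ and $\lambda(5m) = -\lambda(m)$ to spread it out into a wider window; apply the contrapositive of Corollary~\ref{easy}(c) at appropriately chosen multiples of $3$ within this window to squeeze out additional sign values; then apply Corollary~\ref{easy}(a.2) at the scaled even positions to pin down intermediate signs; and finally translate back via $\lambda(2m) = -\lambda(m)$ to obtain values $\lambda(15m-1)$ and $\lambda(15m+1)$ that are forced to share the same sign, contradicting Corollary~\ref{15c}.

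The main obstacle is identifying the correct scaling factor and the precise sequence of applications of Corollaries~\ref{easy}(c) and (a.2) that produces the final sign clash. Unlike the situation in Proposition~\ref{6n}, the target pattern ${\stackrel{\vee}{\ast}}{+}{+}$ at $9n+3$ has a ${+}{+}$ structure rather than a ${-}{+}{-}$ structure, so the forced sign values do not align as cleanly with the positions $15m \pm 1$ where Corollary~\ref{15c} can be invoked; bridging this gap is likely to require a short case analysis on $n$ modulo $30$ and may need to invoke Propositions~\ref{15n} or~\ref{6n} themselves in place of the cleaner Corollary~\ref{15c}.
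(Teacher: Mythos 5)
Your first step matches the paper's: restrict to the event $9n+3\mapsto{\stackrel{\vee}{\ast}}{+}{+}$ and invoke Corollary~\ref{easy}(c) to force
$9n+3\mapsto{+}{\ast}{\ast}{\stackrel{\vee}{-}}{+}{+}{-}{\ast}{\ast}{+}$. That part is correct. But from there the proposal diverges and, as you concede, does not actually close the argument. The translation to constraints at $n$, $n\pm1$ and at $3n$, $3n+1$, $3n+2$, $3n+3$ adds no real information (the values at $3n$, $3n+3$ already follow from multiplicativity applied to $\lambda(n),\lambda(n+1)$), and the remainder of the plan -- rescale by $20$ or $60$, guess which multiples of $3$ to hit with the contrapositive of Corollary~\ref{easy}(c), invoke Corollary~\ref{easy}(a.2), and hope to land on a clash with Corollary~\ref{15c} or Proposition~\ref{6n}/\ref{15n} -- is explicitly left open (``the main obstacle is identifying the correct scaling factor and the precise sequence of applications''). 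So this is a genuine gap: the proposal ends with a conjectured strategy, not a proof.

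The paper's own finish is in fact much shorter and uses none of the heavier $15$-scale results. After the first step, multiply by $2$ (so $\lambda(2m)=-\lambda(m)$) to read the pattern at $18n+9$:
\[
18n+9\mapsto{+}{\ast}{-}{\stackrel{\vee}{\ast}}{-}{\ast}{+},
\]
i.e.\ $\lambda(18n+8)=\lambda(18n+10)=-1$. Since $18n+9$ is a multiple of $3$, Corollary~\ref{3n} (the statement $3m\not\mapsto{-}{\stackrel{\vee}{-}}{-}$ a.a.s.) forces $\lambda(18n+9)=+1$. Dividing by $3$ via $\lambda(3m)=-\lambda(m)$ then yields $\lambda(6n+2)=\lambda(6n+3)=\lambda(6n+4)=-1$, i.e.\ $6n+3\mapsto{-}{\stackrel{\vee}{-}}{-}$ -- which contradicts Corollary~\ref{3n} again since $6n+3$ is a multiple of $3$. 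The lesson here is that after applying Corollary~\ref{easy}(c), you should be looking for a direct clash with Corollary~\ref{3n} (which is ``local'' at scale $3$), rather than trying to engineer a collision at $15m\pm1$; the ${+}{+}$ structure of the target pattern aligns naturally with the ${-}{-}{-}$ obstruction after a single doubling and a single division by $3$.
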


\begin{proof}  We restrict to the event that $9n+3 \mapsto {\stackrel{\vee}{\ast}}{+}{+}$.  By Corollary \ref{easy}(c) and Lemma \ref{linear} we then have
$$ 9n+3 \mapsto {+}{\ast}{\ast}{\stackrel{\vee}{-}}{+}{+}{-}{\ast}{\ast}{+}$$
a.a.s..  Since $\lambda(2m)=-\lambda(m)$, we then have
$$ 18n+9 \mapsto {+}{\ast}{-}{\stackrel{\vee}{\ast}}{-}{\ast}{+}.$$
By Corollary \ref{3n} and Lemma \ref{linear} we thus have
$$ 18n+9 \mapsto {+}{\ast}{-}{\stackrel{\vee}{+}}{-}{\ast}{+}$$
a.a.s.; since $\lambda(3m) = -\lambda(m)$, we then have
$$ 6n+3 \mapsto {-}{\stackrel{\vee}{-}}{-}$$
which by Corollary \ref{3n} and Lemma \ref{linear} leads to a contradiction a.a.s..
\end{proof}

\begin{proposition}\label{9b}  Asymptotically almost surely, the claim $9n \mapsto {\stackrel{\vee}{\ast}}{+}{+}$ implies $6n-3 \mapsto {\stackrel{\vee}{\ast}}{+}{+}$, and similarly $9n-3 \mapsto {\stackrel{\vee}{\ast}}{+}{+}$ implies $6n \mapsto {\stackrel{\vee}{\ast}}{+}{+}$.
\end{proposition}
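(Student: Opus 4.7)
The plan is to prove each implication by contradiction, in a structure parallel to the proof of Proposition~\ref{9a}. For the first implication, the hypothesis $9n \mapsto {\stackrel{\vee}{\ast}}{+}{+}$ together with Corollary~\ref{easy}(c) (applied to $3k=9n$, legitimate by Lemma~\ref{linear}) immediately yields $9n \mapsto {+}{\ast}{\ast}{\stackrel{\vee}{-}}{+}{+}{-}{\ast}{\ast}{+}$ a.a.s. In particular $\lambda(9n-3) = +1$, and the multiplicative relations $\lambda(3m) = -\lambda(m)$ and $\lambda(2m) = -\lambda(m)$ propagate this to $\lambda(3n-1) = -1$ and then $\lambda(6n-2) = +1$. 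Thus the first coordinate of the target pattern $6n-3 \mapsto {\stackrel{\vee}{\ast}}{+}{+}$ comes for free, and the implication reduces to the single claim $\lambda(6n-1) = +1$ a.a.s.\ on the event of the hypothesis.

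Assume for contradiction that $9n \mapsto {\stackrel{\vee}{\ast}}{+}{+}$ and $\lambda(6n-1) = -1$ both hold with positive asymptotic probability. By Corollary~\ref{corp} this forces $\lambda(6n+1) = +1$ a.a.s.\ on this subevent. Using $\lambda(2m) = -\lambda(m)$ and $\lambda(3m) = -\lambda(m)$, I would then compute
$$ \lambda(18n+2) = -\lambda(9n+1) = -1, \quad \lambda(18n+3) = -\lambda(6n+1) = -1, \quad \lambda(18n+4) = -\lambda(9n+2) = -1. $$
This is exactly the forbidden pattern $3k \mapsto {-}{\stackrel{\vee}{-}}{-}$ at $k = 6n+1$; Corollary~\ref{3n}, transported by Lemma~\ref{linear}, rules it out a.a.s., producing the required contradiction.

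The second implication is handled by the mirror argument. From $9n-3 \mapsto {\stackrel{\vee}{\ast}}{+}{+}$, Corollary~\ref{easy}(c) with $3k = 9n-3$ gives $\lambda(9n+3) = +1$, hence $\lambda(3n+1) = -1$ and $\lambda(6n+2) = +1$, so the second coordinate of the target pattern $6n \mapsto {\stackrel{\vee}{\ast}}{+}{+}$ is free. Assuming $\lambda(6n+1) = -1$ for contradiction and invoking Corollary~\ref{corp} to obtain $\lambda(6n-1) = +1$, the analogous calculation yields
$$ \lambda(18n-4) = -\lambda(9n-2) = -1, \quad \lambda(18n-3) = -\lambda(6n-1) = -1, \quad \lambda(18n-2) = -\lambda(9n-1) = -1, $$
which is again the forbidden pattern $3k \mapsto {-}{\stackrel{\vee}{-}}{-}$, this time at $k = 6n-1$, contradicting Corollary~\ref{3n}.

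The only nontrivial step is identifying the right multiple of $3$ on which to fire Corollary~\ref{3n}. One needs a $3k$ flanked by $2(9n\pm 1)$ and $2(9n\pm 2)$, so that the outer entries of the triple $(\lambda(3k-1),\lambda(3k),\lambda(3k+1))$ come from doubling the hypothesis positions while the middle entry comes from tripling the position $6n\mp 1$ controlled by Corollary~\ref{corp}. The arithmetic identity $18n+3 = 3(6n+1)$ (respectively $18n-3 = 3(6n-1)$) is exactly what aligns these three values, and once one spots it the remainder of the proof is just routine bookkeeping.
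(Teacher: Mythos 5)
Your proof is correct, and the $\lambda$-value computations all check out. The skeleton coincides with the paper's: both use Corollary~\ref{easy}(c) to get the easy coordinate of the target pattern for free ($\lambda(6n-2)=+1$, resp.\ $\lambda(6n+2)=+1$), both read off $\lambda(18n+2)=\lambda(18n+4)=-1$ (resp.\ $\lambda(18n-4)=\lambda(18n-2)=-1$) by doubling the hypothesis positions, and both fire Corollary~\ref{3n} at $18n\pm 3$ as the key forcing step. Where you diverge is in closing the argument: the paper converts $\lambda(18n\pm3)=+1$ back to $\lambda(6n\mp1)=-1$ by multiplicativity, also records $\lambda(6n)=-1$ from the long Corollary~\ref{easy}(c) pattern, and then applies Corollary~\ref{3n} a \emph{second} time at $6n$; you instead assume $\lambda(6n\mp1)=-1$ for contradiction, use Corollary~\ref{corp} to flip this to $\lambda(6n\pm1)=+1$, and land directly on the forbidden $18n\pm3 \mapsto {-}{\stackrel{\vee}{-}}{-}$. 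Since Corollary~\ref{corp} already packages the information that the second Corollary~\ref{3n} application would extract (it is derived from Proposition~\ref{6n} and Corollary~\ref{3n}), your route is a genuine if modest streamlining: it needs less of the long Corollary~\ref{easy}(c) pattern (you never use $\lambda(9n)$ or $\lambda(9n+3)$) and dispenses with the ten-entry pattern bookkeeping. There is no circularity, as Corollary~\ref{corp} is established before Proposition~\ref{9b}.
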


\begin{proof}  Let us first restrict to the event that $9n \mapsto {\stackrel{\vee}{\ast}}{+}{+}$.  By Corollary \ref{easy}(c) and Lemma \ref{linear}, we then have
$$ 9n \mapsto {+}{\ast}{\ast}{\stackrel{\vee}{-}}{+}{+}{-}{\ast}{\ast}{+}$$
a.a.s..  Since $\lambda(2m)=-\lambda(m)$, this implies
$$ 18n \mapsto {-}{\ast}{\ast}{\ast}{\ast}{\ast}{\stackrel{\vee}{+}}{\ast}{-}{\ast}{-}.$$
From Corollary \ref{3n} and Lemma \ref{linear} we a.a.s. have
$$ 18n \not \mapsto {\stackrel{\vee}{\ast}}{\ast}{-}{-}{-}$$
and hence
$$ 18n \mapsto {-}{\ast}{\ast}{\ast}{\ast}{\ast}{\stackrel{\vee}{+}}{\ast}{-}{+}{-};$$
since $\lambda(3m)=-\lambda(m)$, this implies that
$$ 6n-3 \mapsto {\stackrel{\vee}{\ast}}{+}{\ast}{-}{-}.$$
But from Corollary \ref{3n} and Lemma \ref{linear} we have
$$ 6n-3 \not \mapsto {\stackrel{\vee}{\ast}}{+}{-}{-}{-}$$
a.a.s., and hence $6n-3 \mapsto {\stackrel{\vee}{\ast}}{+}{+}$ a.a.s. as required.

Similarly, if we instead restrict to the event $9n-3 \mapsto {\stackrel{\vee}{\ast}}{+}{+}$, then Corollary \ref{easy}(c) and Lemma \ref{linear} give
$$ 9n \mapsto {+}{\ast}{\ast}{-}{+}{+}{\stackrel{\vee}{-}}{\ast}{\ast}{+}$$
and then
$$ 18n \mapsto {-}{\ast}{-}{\ast}{\stackrel{\vee}{+}}{\ast}{\ast}{\ast}{\ast}{\ast}{-}$$
and then by Corollary \ref{3n} and Lemma \ref{linear} as before
$$ 18n \mapsto {-}{+}{-}{\ast}{\stackrel{\vee}{+}}{\ast}{\ast}{\ast}{\ast}{\ast}{-}$$
and thus
$$ 6n \mapsto {-}{\stackrel{\vee}{-}}{\ast}{+}$$
and then by Corollary \ref{3n} and Lemma \ref{linear} we have $6n \mapsto {\stackrel{\vee}{\ast}}{+}{+}$ as required.
\end{proof}

By combining the two preceding propositions with an iteration argument, we obtain

\begin{corollary}\label{3p}  We have $3n \not \mapsto {\stackrel{\vee}{\ast}}{+}{+}$ a.a.s..
\end{corollary}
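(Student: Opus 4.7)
The plan is to combine Propositions \ref{9a} and \ref{9b} via a short averaging argument over two different residue decompositions of the arithmetic progression $3\Z$, and to close the argument by a self-defeating inequality of the form ``$\Pr \leq \tfrac{2}{3}\Pr$''. Write $A(m)$ for the event $m \mapsto {\stackrel{\vee}{\ast}}{+}{+}$ and let $\Pr(\cdot)$ denote asymptotic probability in the sense of Section \ref{asym}. The goal is $\Pr(A(3n)) = 0$.

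First I would set up two expressions for $\Pr(A(3n))$. Splitting $3\Z$ into residue classes modulo $9$ and applying Lemma \ref{linear} gives
$$ \Pr(A(3n)) = \tfrac{1}{3}\bigl(\Pr(A(9k)) + \Pr(A(9k+3)) + \Pr(A(9k+6))\bigr), $$
while splitting instead modulo $6$ (that is, by the parity of $n$) gives
$$ \Pr(A(3n)) = \tfrac{1}{2}\bigl(\Pr(A(6k)) + \Pr(A(6k+3))\bigr). $$
Using translation invariance (the $q=1$ case of Lemma \ref{linear}) I may replace $\Pr(A(9k+6))$ by $\Pr(A(9k-3))$ and $\Pr(A(6k+3))$ by $\Pr(A(6k-3))$; Proposition \ref{9a} then kills the middle term of the first decomposition, leaving
$$ 3\Pr(A(3n)) = \Pr(A(9k)) + \Pr(A(9k-3)), \qquad 2\Pr(A(3n)) = \Pr(A(6k)) + \Pr(A(6k-3)). $$

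Next, Proposition \ref{9b} supplies the two a.a.s.\ implications $A(9k) \Rightarrow A(6k-3)$ and $A(9k-3) \Rightarrow A(6k)$, hence the probability bounds
$$ \Pr(A(9k)) \leq \Pr(A(6k-3)), \qquad \Pr(A(9k-3)) \leq \Pr(A(6k)). $$
Adding these two inequalities and substituting the two displayed identities above gives $3\Pr(A(3n)) \leq 2\Pr(A(3n))$, which forces $\Pr(A(3n)) = 0$, i.e.\ $3n \not\mapsto {\stackrel{\vee}{\ast}}{+}{+}$ a.a.s.

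The substantive work has already been done in Propositions \ref{9a} and \ref{9b}; the only real task here is bookkeeping the various residue-class decompositions of $3\Z$ and the translation identifications so that the natural averaging inequality closes. Presumably this is what the text calls the ``iteration argument'', in the sense that Proposition \ref{9b} is invoked separately for the two residue classes of $3\Z \pmod 9$ that survive Proposition \ref{9a}, and the result is then compared against the mod-$6$ decomposition of the same probability.
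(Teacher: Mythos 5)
Your proof is correct, and it takes a genuinely different route from the paper's. Both arguments use exactly the same substantive inputs — Propositions \ref{9a} and \ref{9b} plus the asymptotic-probability calculus of Lemma \ref{linear} — but the paper closes the argument by iteration: it defines the partial map $f$ with $f(3n)=2n-1$, $f(3n-1)=2n$, observes (from \ref{9a} and \ref{9b}) that $3n\mapsto{\stackrel{\vee}{\ast}}{+}{+}$ forces a.a.s.\ that $f(n)\neq\bot$ and $3f(n)\mapsto{\stackrel{\vee}{\ast}}{+}{+}$, then iterates $k$ times and notes that the event ``$n,f(n),\dots,f^k(n)$ avoid $\bot$'' has asymptotic probability $(2/3)^{k+1}\to 0$. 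Your version instead performs a single averaging step, decomposing $\Pr(A(3n))$ once mod $9$ and once mod $6$, killing the $9k+3$ term with \ref{9a}, feeding the two surviving mod-$9$ terms into the two implications of \ref{9b}, and landing on the self-defeating inequality $3\Pr(A(3n))\le 2\Pr(A(3n))$. The underlying mechanism is the same — the factor $2/3$ lost per iteration in the paper is exactly the $3\neq 2$ mismatch in your two decompositions — but your formulation extracts the conclusion in one pass rather than via a limit $k\to\infty$, which is cleaner and avoids introducing the auxiliary partial function $f$ and its residue-class count. The paper's iterative phrasing does make the ``density of the reachable set shrinks geometrically'' picture more visible, which fits its broader narrative of $\lambda$ pretending to be $\chi_3$, but as a proof of the corollary itself your version is at least as good.
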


\begin{proof}  Let $f: \N \to \N \cup \{\bot\}$ be the partially defined function given by the formulae $f(3n) := 2n-1$, and $f(3n-1) := 2n$ for $n \geq 2$, with all other values of $f$ equal to the undefined symbol $\bot$.  We can then combine Propositions \ref{9a}, \ref{9b} to give the following assertion (by dividing into cases based on the residue of $n$ modulo $3$): if $3n \mapsto {\stackrel{\vee}{\ast}}{+}{+}$, then a.a.s. $f(n) \neq \bot$ and $3f(n) \mapsto {\stackrel{\vee}{\ast}}{+}{+}$. Iterating this, we conclude in particular that for any fixed $k$, we have a.a.s. that if $3n \mapsto {\stackrel{\vee}{\ast}}{+}{+}$, then the sequence $n, f(n), f^2(n), \dots, f^k(n)$ avoids $\bot$.  But a routine count shows that the event that $n, f(n), f^2(n), \dots, f^k(n)$ avoids $\bot$ occurs with asymptotic probability $(2/3)^{k+1}$.  As $k$ can be arbitrarily large, we obtain the claim.
\end{proof}

\begin{corollary}\label{312}  Asymptotically almost surely, we have $\lambda(3n+1)=-\lambda(3n+2)$.
\end{corollary}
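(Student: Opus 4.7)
The plan is to derive Corollary \ref{312} from Corollary \ref{3p} via a short inclusion-exclusion in the asymptotic probability measure, using Proposition \ref{pnt} as the density input. Since $\lambda$ takes values only in $\{-1,+1\}$, the statement $\lambda(3n+1) = -\lambda(3n+2)$ a.a.s.\ is equivalent to saying that both of the events $\{\lambda(3n+1)=\lambda(3n+2)=+1\}$ and $\{\lambda(3n+1)=\lambda(3n+2)=-1\}$ have asymptotic probability zero.

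The first of these events is exactly $3n \mapsto {\stackrel{\vee}{\ast}}{+}{+}$, which is handled directly by Corollary \ref{3p}. For the second event, I would apply Proposition \ref{pnt} with $(q,r)=(3,1)$ and $(q,r)=(3,2)$ to conclude that each of $\lambda(3n+1)=+1$ and $\lambda(3n+2)=+1$ occurs with asymptotic probability $1/2$. By finite additivity of asymptotic probability (inclusion-exclusion applied to the two events above),
\begin{equation*}
\Pr(\lambda(3n+1)=+1 \text{ or } \lambda(3n+2)=+1) = \tfrac{1}{2}+\tfrac{1}{2}-\Pr(\lambda(3n+1)=\lambda(3n+2)=+1) = 1,
\end{equation*}
where the last equality uses Corollary \ref{3p}. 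Taking the complement, and using that $\lambda$ avoids the value $0$, we obtain that $\lambda(3n+1)=\lambda(3n+2)=-1$ holds with asymptotic probability zero.

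Combining these two facts, the event $\lambda(3n+1) = \lambda(3n+2)$ has asymptotic probability zero, and therefore $\lambda(3n+1) = -\lambda(3n+2)$ a.a.s., as required. There is essentially no obstacle here: all the work was done in the build-up to Corollary \ref{3p}; the only subtle point is to recognize that Corollary \ref{3p} plus the prime number theorem input of Proposition \ref{pnt} forces the $(-,-)$ pattern at $(3n+1, 3n+2)$ to be negligible \emph{automatically}, without needing to rerun the entire sign-flipped argument.
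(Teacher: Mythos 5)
Your argument is precisely the paper's own proof, just with the inclusion-exclusion step written out explicitly: Proposition~\ref{pnt} gives each of $\lambda(3n+1)=+1$ and $\lambda(3n+2)=+1$ asymptotic probability $1/2$, Corollary~\ref{3p} kills the joint $(+,+)$ event, and inclusion-exclusion then forces the $(-,-)$ event to have probability zero as well. No differences in approach.
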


\begin{proof}  By Proposition \ref{pnt},  we have $\lambda(3n+1)=+1$ and $\lambda(3n+2)=+1$ with an asymptotic probability of $1/2$ each. From Corollary \ref{3p} we see that $\lambda(3n+1)=\lambda(3n+2)=+1$ holds with asymptotic probability $0$.  The claim then follows from the inclusion-exclusion principle.
\end{proof}

\begin{corollary}\label{312b}  Asymptotically almost surely, we have $\lambda(3n-1)=-\lambda(3n+1)$.
\end{corollary}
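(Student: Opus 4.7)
The plan is to reduce the claim to previously established corollaries by splitting on the parity of $n$ and exploiting the multiplicativity $\lambda(2k)=-\lambda(k)$. By the consequence of Lemma \ref{linear} noted just after it, the assertion $\lambda(3n-1)=-\lambda(3n+1)$ holds a.a.s.\ if and only if it holds a.a.s.\ after each of the substitutions $n \mapsto 2m$ and $n \mapsto 2m+1$.

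The substitution $n = 2m$ turns the desired identity into $\lambda(6m-1) = -\lambda(6m+1)$, which is exactly Corollary \ref{corp}. For the substitution $n = 2m+1$, the desired identity becomes $\lambda(6m+2) = -\lambda(6m+4)$. Factoring out the $2$ and using $\lambda(2k) = -\lambda(k)$, both sides rewrite as $\lambda(6m+2) = -\lambda(3m+1)$ and $\lambda(6m+4) = -\lambda(3m+2)$, so the identity is equivalent to $\lambda(3m+1) = -\lambda(3m+2)$, which is Corollary \ref{312}. Combining the two cases via Lemma \ref{linear} concludes the proof.

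There is essentially no obstacle here: the argument is a short case split by parity together with a single application of multiplicativity at the prime $2$, and the substantive inputs (Corollaries \ref{corp} and \ref{312}) have already been established.
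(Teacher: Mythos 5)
Your proof is correct and is essentially identical to the paper's argument: a parity split, with the even case handled by Corollary \ref{corp} and the odd case reduced to Corollary \ref{312} via the multiplicativity $\lambda(2k)=-\lambda(k)$.
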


\begin{proof}  For $n$ even, this follows from Corollary~\ref{corp}, so suppose that $n=2N+1$.  But then since $\lambda(2m)=-\lambda(m)$, the claim $\lambda(3n-1)=-\lambda(3n+1)$ in this case is equivalent to $\lambda(3N+1)=-\lambda(3N+2)$, and the claim follows from Corollary \ref{312}.
\end{proof}

We can combine Corollary \ref{312}, \ref{312b} using the Dirichlet character $\chi_3$ to conclude that asymptotically almost surely, $\lambda \chi_3$ is constant on the set $\{ 3n-1, 3n+1, 3n+2\}$.  Shifting $n$ repeatedly by $1$ using Lemma \ref{linear}, we conclude that for any fixed $k \geq 1$, $\lambda \chi_3$ is a.a.s. constant on the set $\{3n-1, 3n+1, 3n+2, 3n+4, 3n+5, \dots, 3n+3k-1, 3n+3k+1, 3n+3k+2 \}$, and in particular
$$ |\sum_{j=0}^{3k} \lambda(3n+j) \chi_3(3n+j)| = 2k$$
a.a.s..  By Lemma \ref{linear} this implies that
$$ |\sum_{j=0}^{3k} \lambda(n+j) \chi_3(n+j)| \geq 2k - 6$$
(say) a.a.s..  But this contradicts Theorem \ref{mrt-3} if $k$ is large enough.  This (finally!) completes the proof of Theorem \ref{newmain} for the sign pattern ${\stackrel{\vee}{+}}{+}{+}$.  The case ${\stackrel{\vee}{-}}{-}{-}$ is proven similarly by reversing all the signs in the above argument.

\section{The ${\stackrel{\vee}{+}}{-}{+}$ case}

We now prove Theorem \ref{newmain} for the sign pattern ${\stackrel{\vee}{+}}{-}{+}$; the argument for ${\stackrel{\vee}{-}}{+}{-}$ is analogous and follows by reversing all the signs below.  Our arguments follow that of Hildebrand \cite{hil}, adapted to the notation of this paper.

For sake of contradiction, we assume that

\begin{hypothesis}\label{po2} We have $n \not \mapsto {\stackrel{\vee}{+}}{-}{+}$ a.a.s..
\end{hypothesis}

This leads to the following implications:

\begin{proposition}\label{clam}  Asymptotically almost surely, the following two claims hold:
\begin{itemize}
\item[(a)]  If $2n \mapsto {\stackrel{\vee}{+}}{+}$, then $3n \mapsto {\stackrel{\vee}{+}}{+}$.
\item[(b)]  If $2n \mapsto {+}{\stackrel{\vee}{+}}$, then $3n \mapsto {+}{\stackrel{\vee}{+}}$.
\end{itemize}
\end{proposition}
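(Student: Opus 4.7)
The plan is to prove both (a) and (b) by contradiction, using only two applications of Hypothesis~\ref{po2} (at different shifts, legitimized by Lemma~\ref{linear}) together with the multiplicativity identities $\lambda(2m)=-\lambda(m)$ and $\lambda(3m)=-\lambda(m)$. In both parts the hypothesis supplies the value of $\lambda(n)$ (hence of $\lambda(3n)$) automatically, reducing the task to pinning down $\lambda(3n+1)$ or $\lambda(3n-1)$ a.a.s.

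For part (a), I first unpack $2n \mapsto {\stackrel{\vee}{+}}{+}$ as $\lambda(2n)=\lambda(2n+1)=+1$. By multiplicativity this already yields $\lambda(n) = -1$ and hence $\lambda(3n) = -\lambda(n) = +1$, so the only remaining task is to verify $\lambda(3n+1) = +1$ a.a.s. I would argue by contradiction: restrict to the event that $2n \mapsto {\stackrel{\vee}{+}}{+}$ and $\lambda(3n+1) = -1$ both hold, and assume this event has positive asymptotic probability. Multiplicativity pins down $\lambda(6n+2) = -\lambda(3n+1) = +1$ and $\lambda(6n+3) = -\lambda(2n+1) = -1$. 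Applying Hypothesis~\ref{po2} at $6n+2$ (via Lemma~\ref{linear}) rules out $6n+2 \mapsto {\stackrel{\vee}{+}}{-}{+}$ a.a.s., and therefore forces $\lambda(6n+4) = -1$; multiplicativity then gives $\lambda(3n+2) = -\lambda(6n+4) = +1$. But this says $(\lambda(3n),\lambda(3n+1),\lambda(3n+2))=(+1,-1,+1)$, i.e.\ $3n \mapsto {\stackrel{\vee}{+}}{-}{+}$, which is a second violation of Hypothesis~\ref{po2}.

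Part (b) follows by the mirror template, shifted down by one: from $2n \mapsto {+}{\stackrel{\vee}{+}}$ I again extract $\lambda(n) = -1$ and $\lambda(3n) = +1$, and reduce to showing $\lambda(3n-1) = +1$ a.a.s. Assuming $\lambda(3n-1) = -1$ yields $\lambda(6n-2) = -\lambda(3n-1) = +1$ and $\lambda(6n-3) = -\lambda(2n-1) = -1$; Hypothesis~\ref{po2} applied at $6n-4$ then forces $\lambda(6n-4) = -1$, hence $\lambda(3n-2)=+1$, and the triple $(3n-2,3n-1,3n) = (+1,-1,+1)$ contradicts Hypothesis~\ref{po2} once more.

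I do not foresee any genuine obstacle here; both arguments hinge on the simple observation that the sub-progression $\{6n+j\}$ is simultaneously visible to both the ``$2$-multiplicative'' and ``$3$-multiplicative'' versions of Hypothesis~\ref{po2}. The only care required is bookkeeping: each invocation of Hypothesis~\ref{po2} at a shifted variable must be reindexed via Lemma~\ref{linear}, and the successive a.a.s.\ statements must be combined using finite additivity of asymptotic probability (so that the event on which all the derived equalities hold has the same asymptotic probability as the starting event).
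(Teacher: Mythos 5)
Your proof is correct and follows essentially the same route as the paper: multiplicativity at $2$ and $3$ pins down $\lambda(6n)$, $\lambda(6n+3)$, and $\lambda(3n)$, a first application of Hypothesis~\ref{po2} at $6n+2$ forces $\lambda(6n+4)$, and a second application at $3n$ yields the contradiction, with (b) obtained by the mirrored argument. The paper packages the same deductions in the $\vee$-pattern notation applied to $6n$, but the underlying steps and invocations of Lemma~\ref{linear} are identical.
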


\begin{proof} We just prove (a), as (b) is analogous (reflecting all sign patterns around the positional marker $\vee$).  Suppose that $2n \mapsto {\stackrel{\vee}{+}}{+}$, then since $\lambda(3m)=-\lambda(m)$, we have
$$ 6n \mapsto {\stackrel{\vee}{-}}{\ast}{\ast}{-}.$$
Since $\lambda(2m)=-\lambda(m)$, our objective is to show that $\lambda(6n+2)=-1$ a.a.s..  Suppose instead that $\lambda(6n+2)=+1$, thus
$$ 6n \mapsto {\stackrel{\vee}{-}}{\ast}{+}{-}.$$
By Hypothesis \ref{po2} and Lemma \ref{linear}, we have
$$ 6n \not\mapsto {\stackrel{\vee}{\ast}}{\ast}{+}{-}{+}$$
and thus
$$ 6n \mapsto {\stackrel{\vee}{-}}{\ast}{+}{-}{-}$$
a.a.s..  But as $\lambda(2m)=-\lambda(m)$, this implies
$$ 3n \mapsto {\stackrel{\vee}{+}}{-}{+}$$
which contradicts Hypothesis \ref{po2} and Lemma \ref{linear} a.a.s..
\end{proof}

\begin{corollary}\label{clam-2}  Let $a, b$ be integers with $a < b$.  Asymptotically almost surely, if $\lambda(m)=+1$ for all $n+a \leq m \leq n+b$, then
$\lambda(m)=+1$ for all $\frac{3}{2} (n+a) \leq m \leq \frac{3}{2} (n+b)$.
\end{corollary}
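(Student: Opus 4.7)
The approach is to apply Proposition~\ref{clam} to each pair of consecutive $+1$'s inside $[n+a,n+b]$, and to check that the resulting pairs of $+1$'s tile the scaled interval $[\tfrac{3}{2}(n+a),\tfrac{3}{2}(n+b)]$ without gaps.

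Assume $\lambda(m)=+1$ for every integer $m\in[n+a,n+b]$, so that for each integer $k\in[n+a,n+b-1]$ one has $\lambda(k)=\lambda(k+1)=+1$. If $k=2m$ is even, then $2m\mapsto{\stackrel{\vee}{+}}{+}$, and Proposition~\ref{clam}(a) yields (a.a.s.)\ $\lambda(3m)=\lambda(3m+1)=+1$; that is, $\lambda=+1$ at the two consecutive integers $\tfrac{3k}{2}$ and $\tfrac{3k}{2}+1$. If instead $k$ is odd, set $m=(k+1)/2$; then $2m\mapsto{+}{\stackrel{\vee}{+}}$, and Proposition~\ref{clam}(b) yields $\lambda(3m-1)=\lambda(3m)=+1$, i.e.\ $\lambda=+1$ at $\tfrac{3k+1}{2}$ and $\tfrac{3k+3}{2}$.

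A direct parity check shows that as $k$ ranges over $[n+a,n+b-1]$, the pairs of integers produced for consecutive values $k,k+1$ either abut (when $k$ is even, so $k+1$ is odd) or overlap in one point (when $k$ is odd, so $k+1$ is even), and therefore their union is precisely the set of integers in $[\tfrac{3}{2}(n+a),\tfrac{3}{2}(n+b)]$, reaching the endpoints $\lceil\tfrac{3}{2}(n+a)\rceil$ and $\lfloor\tfrac{3}{2}(n+b)\rfloor$. Since $a$ and $b$ are fixed, only a bounded number of applications of Proposition~\ref{clam} are invoked, each one holding a.a.s.\ after a linear reindexing via Lemma~\ref{linear}; the finite conjunction is then still a.a.s. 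The only slightly fiddly point is the tiling bookkeeping across the two parities, but this is entirely routine once the four cases for $(k,k+1)$ are tabulated.
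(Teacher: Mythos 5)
Your proof is correct and is essentially the paper's argument, just with the tiling bookkeeping written out explicitly: the paper first reduces to the case $a=0,\ b=1$ via a union bound and Lemma~\ref{linear}, then splits by the parity of $n$ and invokes Proposition~\ref{clam}(a) or (b), which is exactly what you do pair-by-pair for $k\in[n+a,n+b-1]$.
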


\begin{proof} By the union bound and Lemma \ref{linear}, it suffices to verify this in the case $a=0, b=1$.  Proposition \ref{clam}(a) then handles the case when $n$ is even, and Proposition \ref{clam}(b) handles the case when $n$ is odd, and the claim then follows from Lemma \ref{linear}.
\end{proof}

\begin{corollary}\label{lo} For any natural number $k$, we have $\lambda(n)=\ldots=\lambda(n+k-1)=+1$ with positive asymptotic probability.
\end{corollary}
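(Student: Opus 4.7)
\noindent\textit{Proof plan.} Write $p_k$ for the asymptotic probability that $\lambda(n) = \lambda(n+1) = \cdots = \lambda(n+k-1) = +1$; the goal is to show $p_k > 0$ for every $k \geq 1$, which I will do by induction on $k$. The bases $k = 1$ and $k = 2$ follow from Proposition \ref{pnt} and Theorem \ref{pap} respectively.

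The critical bridge is establishing $p_3 > 0$, since Corollary \ref{clam-2} cannot on its own grow a length-$2$ interval: the stretched interval $[3n/2, 3(n+1)/2]$ has length $3/2$ and thus contains exactly two integers regardless of the parity of $n$. To overcome this, I plan to invoke Hypothesis \ref{po2} directly. Theorem \ref{pap} applied to the pattern ${\stackrel{\vee}{-}}{-}$ gives positive asymptotic probability for the event $\lambda(m) = \lambda(m+1) = -1$; since $\lambda(2m) = -\lambda(m)$, this event coincides with $\lambda(2m) = \lambda(2m+2) = +1$, and Lemma \ref{linear} then furnishes positive asymptotic probability for $\lambda(n) = \lambda(n+2) = +1$. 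But Hypothesis \ref{po2} forbids the pattern $(+,-,+)$ asymptotically almost surely, so on this positive-probability event we must have $\lambda(n+1) = +1$ a.a.s., giving $p_3 > 0$.

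For the inductive step $k \to k+1$ with $k \geq 3$, I apply Corollary \ref{clam-2} with $a = 0$, $b = k-1$ to a length-$k$ run at an even position $n = 2m$: this produces $\lambda = +1$ on the real interval $[3m, 3m + 3(k-1)/2]$, which contains $\lfloor 3(k-1)/2 \rfloor + 1 \geq k+1$ integers for $k \geq 3$, yielding $p_{k+1} > 0$. By Lemma \ref{linear} it is enough to know that length-$k$ runs occur at an even position with positive asymptotic probability. For $k \geq 4$ the odd-position case also stretches to a strictly longer run, so no parity issue arises. For $k = 3$ with only odd starting positions contributing, I iterate Corollary \ref{clam-2} and decompose by residues modulo $2^T$ via Lemma \ref{linear}; if the iteration never produced an even-position length-$3$ run, then $p_3$ would have to concentrate inside a residue class of density $1/2^T$ modulo $2^T$ for every $T$, giving $p_3 \leq 1/2^T$ via Lemma \ref{linear} and hence $p_3 = 0$, contradicting the previous step.

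The hardest part of the argument is the $k = 2 \to k = 3$ bridge: Corollary \ref{clam-2} alone is inadequate here because stretching a length-$2$ interval yields only another length-$2$ interval (regardless of parity), so Hypothesis \ref{po2} must be used in an essential way at precisely this step. Once $p_3 > 0$ is secured, the stretching mechanism of Corollary \ref{clam-2} carries the induction through, with only a mild parity bookkeeping at the first inductive step.
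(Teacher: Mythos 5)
Your proposal is correct, and you have correctly identified the key obstacle: Corollary~\ref{clam-2} alone cannot grow a length-$2$ run, so the interval stretching must be primed by something else. Your direct argument that $p_3>0$ (via the $--$ case of Theorem~\ref{pap}, the relation $\lambda(2m)=-\lambda(m)$, and Hypothesis~\ref{po2} filling in the middle value) is valid. The $k\geq 4$ inductive step via Corollary~\ref{clam-2} agrees with the paper's. The interesting divergence is at the $k=3\to k=4$ transition. You correctly notice that a length-$3$ run at an \emph{odd} position only stretches to another length-$3$ run, and you patch this with a parity-refinement iteration: assume for contradiction that even starting positions have probability zero; then the map $n\mapsto \lceil 3n/2\rceil$ (which Corollary~\ref{clam-2} shows a.a.s.\ carries run positions to run positions) forces run positions into the residue class $-1 \bmod 2^{T+1}$ for every $T$, so $p_3\leq 2^{-(T+1)}$ and $p_3=0$, contradiction. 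I verified the arithmetic: if all run positions are $\equiv -1 \pmod{2^T}$ then $n\equiv 2^T-1\pmod{2^{T+1}}$ is ruled out because $\frac{3n+1}{2}\equiv 2^{T-1}-1\not\equiv -1\pmod{2^T}$, so the class does shrink at each step. This works, but it is more machinery than the paper needs. The paper sidesteps the $k=3\to k=4$ issue entirely by establishing $k=4$ as its base case: assuming $n\not\mapsto {\stackrel{\vee}{+}}{+}{+}{+}$ a.a.s., it applies both parts of Proposition~\ref{clam} in the contrapositive to deduce $2n\not\mapsto {\stackrel{\vee}{+}}{+}{+}$ a.a.s., combines this with Hypothesis~\ref{po2} to get $2n\not\mapsto {\stackrel{\vee}{+}}{\ast}{+}$, and then multiplicativity at $2$ translates this into $n\not\mapsto {\stackrel{\vee}{-}}{-}$ a.a.s., contradicting Theorem~\ref{pap} in three lines. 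Your route and the paper's use the same raw ingredients (Theorem~\ref{pap}, Hypothesis~\ref{po2}, multiplicativity, and the stretching lemma), but the paper exploits the \emph{contrapositive} of the stretching lemma to propagate a nonexistence claim backwards from $k=4$ to $k=2$, whereas you push a positive-probability claim forwards from $k=2$ to $k=3$ and then need the residue-refinement to cross the parity gap at $k=3$. Your approach is correct but strictly longer; the paper's use of the contrapositive is the more economical trick.
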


\begin{proof} We first establish the case $k=4$, which of course implies the $k=1,2,3$ cases as well.  Suppose for contradiction that the $k=4$ claim failed, thus
$$ n \not \mapsto {\stackrel{\vee}{+}}{+}{+}{+}$$
a.a.s..  In particular, from Lemma \ref{linear} one a.a.s. has
$$ 3n \not \mapsto {\stackrel{\vee}{+}}{+}{+}{+}.$$
By Proposition \ref{clam} in the contrapositive we then have
$$ 2n \not \mapsto {\stackrel{\vee}{+}}{+}{+}$$
a.a.s.; but by Hypothesis \ref{po2} and Lemma \ref{linear} we have
$$ 2n \not \mapsto {\stackrel{\vee}{+}}{-}{+}$$
and thus (since $\lambda(2m)=-\lambda(m)$)
$$ n \not \mapsto {\stackrel{\vee}{-}}{-}$$
a.a.s..  But this contradicts Theorem \ref{pap}.

Now assume inductively that the claim holds for some $k \geq 4$.  By partitioning according to the parity of $n$ and using Lemma \ref{linear}, we thus see that with positive asymptotic probability, either
$$ \lambda(2n) = \lambda(2n+1) = \dots = \lambda(2n+k-1) = +1$$
or
$$ \lambda(2n-1) = \lambda(2n) = \dots = \lambda(2n+k-2) = +1.$$
In the former case, we see from Corollary \ref{clam-2} (and the hypothesis $k \geq 4$, which implies that $k \leq 3\frac{k-1}{2}$) that
$$ \lambda(3n) = \dots = \lambda(3n+k) = +1,$$
and in the latter case we similarly have (since $k-1 \leq 3\frac{k-2}{2}$)
$$ \lambda(3n-1) = \lambda(3n) = \dots = \lambda(3n+k-1) = +1.$$
In either case we obtain the $k+1$ case of the claim from Lemma \ref{linear}.
\end{proof}

Next, for any fixed real number $a>0$, let $A_a \subset (a,+\infty)$ be the set
$$ A_a := \{ t \in (a,+\infty): \lambda(n)=+1 \hbox{ for all } t-a < n < t+a \}$$
and consider the quantity
$$ p_a := \operatorname{LIM}\left ( \frac{1}{\log i} \int_{x_i/i}^{x_i} 1_{A_a}(t) \frac{dt}{t} \right)_{i=1}^\infty.$$
From Corollary \ref{lo} applied with, say, $k=\lfloor a+10\rfloor$, and rounding to the nearest integer, we see that $p_a > 0$ for any fixed $a>0$.  On the other hand, from Theorem \ref{mrt} (and another rounding argument) we see that
\begin{equation}\label{pa-lim}
\lim_{a \to \infty} p_a = 0.
\end{equation}

We now claim that
\begin{equation}\label{pgrow}
 p_{\frac{3}{2}a + 10} \geq p_{a+10}
\end{equation}
for any $a \geq 0$; iterating this starting from (say) $a=1$, we see that $\limsup_{a \to \infty} p_a \geq p_{11} > 0$, contradicting \eqref{pa-lim}.

We now show \eqref{pgrow}.  Suppose that $t \in A_{a+10}$, thus
$$ \lambda(n)=+1 \hbox{ for all } t-a-10 < n < t+a+10.$$
Applying Corollary \ref{clam-2} (and rounding to the nearest integer), we then conclude that
$$ \lambda(n)=+1 \hbox{ for all } \frac{3}{2} t-\frac{3}{2} a-10 < n < \frac{3}{2} t+\frac{3}{2} a+10$$
for $t \in A_{a+10}$ outside of an exceptional set $E_a \subset (0,+\infty)$ which has zero asymptotic density in the sense that
$$  \operatorname{LIM} \left( \frac{1}{\log i} \int_{x_i/i}^{x_i} 1_{E_a}(t) \frac{dt}{t} \right)_{i=1}^\infty = 0.$$
We conclude that
$$ \operatorname{LIM} \left( \frac{1}{\log i} \int_{x_i/i}^{x_i} 1_{A_{\frac{3}{2}a+10}}\left(\frac{3}{2} t\right) \frac{dt}{t} \right)_{i=1}^\infty \geq p_a,$$
and hence by the change of variables $t' := \frac{3}{2} t$ 
$$ \operatorname{LIM} \left( \frac{1}{\log i} \int_{3x_i/2i}^{3x_i/2} 1_{A_{\frac{3}{2}a+10}}(t) \frac{dt}{t} \right)_{i=1}^\infty \geq p_a.$$
We may replace the limits of integration from $\int_{3x_i/2i}^{3x_i/2}$ to $\int_{x_i/i}^{x_i}$ incurring an error of $O\left( \frac{1}{\log i} \right)$ which vanishes in the limit, and \eqref{pgrow} follows.

\section{A random graph theory question}\label{Graph}

We now return to the proof of Theorem \ref{newmain2}.  In this section we will reduce this theorem to the task of establishing that a certain random graph is almost surely connected; this connectedness will be verified in the next section.  

Recall that the only remaining tasks are to show that $(\mu(n),\mu(n+1))$ attains each of the four values $(+1,+1), (+1,-1), (-1,+1), (-1,-1)$ with positive asymptotic probability.  From the asymptotic probabilities already computed in Section \ref{asym}, one can check that the four events
$$\mu(n)=+1, \mu^2(n+1)=1$$
$$\mu(n)=-1, \mu^2(n+1)=1$$
$$\mu^2(n)=1, \mu(n+1)=+1$$
$$\mu^2(n)=1, \mu(n+1)=-1$$
each occur with asymptotic probability $c/2$, where $c$ was defined in \eqref{cdef}.
In particular we see that $(\mu(n),\mu(n+1))$ takes the values $(+1,+1)$ and $(-1,-1)$ with equal asymptotic probability, and also takes the values $(+1,-1)$ and $(-1,+1)$ with equal asymptotic probability.  Thus, we only need to show that the values $(+1,+1)$ and $(+1,-1)$ are attained with positive asymptotic probability.

Suppose for sake of contradiction that there was a sign $\epsilon \in \{-1,+1\}$ such that $(\mu(n),\mu(n+1))$ avoided $(+1,\epsilon)$ a.a.s..  Then it also avoids $(-1,-\epsilon)$ a.a.s., and so we conclude that we have the implication
$$ \mu^2(n)=\mu^2(n+1)=1 \implies \mu(n) = - \epsilon \mu(n+1)$$
a.a.s..

We can eliminate the sign $-\epsilon$ as follows.  Define $\chi$ to be the completely multiplicative function such that $\chi(p) := +1$ for all $p>2$, and $\chi(2) := - \epsilon$.  Then $\chi(n) = - \epsilon \chi(n+1)$ for any $n$ for which $n,n+1$ are not divisible by $4$, and thus we have
$$ \mu^2(n)=\mu^2(n+1)=1 \implies \mu \chi(n) = \mu \chi(n+1)$$
a.a.s..  As $\mu \chi$ is a multiplicative function, we can use Lemma \ref{linear} and conclude more generally that
\begin{equation}\label{mund}
(\mu^2(n)=\mu^2(n+d)=1 \wedge d|n) \implies \mu \chi(n) = \mu \chi(n+d)
\end{equation}
a.a.s. for any fixed natural number $n$.

The strategy is now to use \eqref{mund} create large ``chains'' $n+a_1, n+a_2, \dots$ on which $\mu \chi$ is constant, and demonstrate that this is incompatible with Theorem \ref{mrt-3}.  It will be convenient to pass from the finitely additive world of asymptotic probability to the countably additive world of genuine probability.  Recall that the \emph{profinite integers} $\hat \Z$ are defined as the inverse limit of the cyclic groups $\Z/N\Z$; we embed the ordinary integers $\Z$ as  a subgroup of $\hat \Z$ in the usual fashion.  The group $\hat \Z$ is compact (in the profinite topology) and thus has a well-defined Haar probability measure, so we can meaningfully talk about a random profinite integer $\mathbf{n} \in \hat \Z$, whose reductions $\mathbf{n}\ (N)$ to any cyclic group $\Z/N\Z$ are uniformly distributed in that group.   (We will use boldface notation to indicate random variables that are generated from a random profinite integer.)  We say that a profinite integer $n$ is divisible by a natural number $N$ if the reduction of $n$ to $N$ vanishes.  The M\"obius function $\mu$ does not obviously extend to the profinite integers, but we can (by abuse of notation) define the quantity $\mu^2(n)$ for profinite $n$ to equal $1$ if $n$ is \emph{squarefree} in the sense that it is not divisible by $p^2$ for any prime $p$, and equal to $0$ otherwise.

We now construct a random graph $\mathbf{G} = (\mathbf{V}, \mathbf{E})$ as follows.   Let $\mathbf{n} \in \hat \Z$ be a random profinite integer, and define the random vertex set $\mathbf{V} \subset \Z$ to be the random set of integers defined by
$$ \mathbf{V} := \{ a \in \Z: \mu^2(\mathbf{n} + a) = 1 \}.$$
We then define $\mathbf{E}$ to be the set of pairs $\{a,b\}$ of distinct vertices $a,b$ in $\mathbf{V}$, such that $|b-a|$ is an odd prime number dividing $\mathbf{n}+a$ (or equivalently $\mathbf{n}+b$), and define the random graph $\mathbf{G}$ by setting $\mathbf{V}$ as the set of vertices and $\mathbf{E}$ as the set of edges.  (The restriction of $|b-a|$ to be an odd prime will be needed in order to keep the analysis of $\mathbf{G}$ tractable.)  Thus, for instance, the integers $2,5$ will be connected by an edge in $\mathbf{G}$ if $\mu^2({\mathbf n}+2)=\mu^2({\mathbf n}+5)=1$ and $3$ divides ${\mathbf n}+2$.

In the rest of the paper we will show

\begin{theorem}\label{surely}  The random graph $\mathbf{G}$ is almost surely connected.
\end{theorem}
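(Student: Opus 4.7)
The event ``$\mathbf{G}$ is connected'' is invariant under the shift $\mathbf{n}\mapsto\mathbf{n}+1$: this shift merely relabels the vertex and edge sets by translating integers by $-1$, yielding an isomorphic graph. The shift action of $\Z$ on $\hat\Z$ with Haar measure is ergodic, since $\Z$ is dense in the compact group $\hat\Z\cong\prod_\ell \Z_\ell$ and the only shift-invariant character of $\hat\Z$ is trivial. Consequently the probability of connectedness is $0$ or $1$, and the task reduces to showing it is strictly positive. By countable additivity applied over pairs $(a,b)\in\Z^2$, it in turn suffices to show that for each fixed pair $a,b$, conditional on the event $a,b\in\mathbf{V}$, ``$a$ and $b$ lie in the same component of $\mathbf{G}$'' has probability one.

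My plan for constructing paths is to exploit the many chains arising from small odd primes. For each odd prime $p$, the arithmetic progression $A_p:=-\mathbf{n}+p\Z$ consists of exactly those integers $a$ with $p\mid\mathbf{n}+a$; within $A_p\cap\mathbf{V}$, consecutive elements of the AP are automatically joined by an edge. Thus $A_p\cap\mathbf{V}$ decomposes into long ``$p$-chains'' broken only by non-squarefree positions in $A_p$, and a given $a\in\mathbf{V}$ lies in one such chain for every odd prime $p$ dividing $\mathbf{n}+a$. For typical $a$ this count is of order $\log\log|\mathbf{n}+a|$, yielding many candidate neighbors per vertex.

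I would then argue that the union of the $p$-chains, as $p$ ranges over all odd primes, forms a connected subgraph containing almost all of $\mathbf{V}$. The key probabilistic input is the independence of the residues $\mathbf{n}\bmod p$ across distinct primes $p$, coming from the product structure $\hat\Z=\prod_\ell\Z_\ell$. Using this independence together with the near-independence of squarefreeness at distinct shifts of $\mathbf{n}$, one can show that for fixed $a,b\in\mathbf{V}$ there is a positive-probability event on which a short chain of odd primes $p_1,\dots,p_k$ and intermediate squarefree shifts joins $a$ to $b$; ergodicity then boosts this to probability one.

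The main obstacle I anticipate is the handling of rare ``degenerate'' vertices $a\in\mathbf{V}$ for which $\mathbf{n}+a$ has very few or only very small odd prime divisors (e.g.\ $\mathbf{n}+a\in\{1,2,p,2p\}$): such a vertex admits at most one or two candidate edges and could in principle be isolated. To control these, I would deploy a Borel--Cantelli-type argument using that the set of such $a$ is sparse in $\Z$ and that, for each such $a$, the conditional probability of isolation (given the prescribed small factorization of $\mathbf{n}+a$) is strictly less than one; the contributions sum to a measure-zero event, after which the generic connectedness of the backbone extends to all of $\mathbf{V}$.
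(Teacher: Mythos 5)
Your ergodicity reduction is correct as a statement (the rotation by $1$ on $\hat{\Z}$ is ergodic, and ``$\mathbf{G}$ connected'' is shift-invariant), but it buys you nothing here. To show $\P(\mathbf{G}\ \text{connected})>0$ you would still have to produce, with positive probability uniformly over all pairs simultaneously, paths connecting every pair of vertices --- an intersection of countably many events. There is no known shortcut: you end up needing, for each fixed $a,b$, that $a$ and $b$ are a.s.\ connected given $a,b\in\mathbf{V}$, and once you have that you get probability one directly (countable intersection of probability-one events), making the zero-one law redundant. This is in fact exactly the shape of the paper's reduction to Proposition~\ref{longpath}.

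The real gap is in the path construction, which you leave entirely to a plan. The ``$p$-chain'' picture is misleading: within the progression $A_p\cap\mathbf{V}$, consecutive residues fail to lie in $\mathbf{V}$ with probability roughly $1-\prod_q(1-q^{-2})\approx 0.39$, so the $p$-chains are \emph{not} long --- they break, on average, after only a couple of steps. ``Broken only by non-squarefree positions'' is precisely the obstruction the paper has to fight, not a minor blemish. Turning the ``union of $p$-chains is connected'' slogan into a proof is the entire content of Sections~\ref{initial-sec}--\ref{conclude-sec}: the paper first replaces $\mathbf{V}$ by a relaxed vertex set $\mathbf{V}_X$ (sieving squares only up to $w=\log^5 X$) so that the event becomes tractable; proves a Vinogradov-type three-primes result (Proposition~\ref{prop:Vinogradov}) to stitch any two dense sets of vertices together by length-$3$ hops; and, most delicately, runs a carefully weighted second-moment argument over random paths $0,p_1,p_1+p_2,\dots$ with primes constrained to a specific window $[\exp(\sqrt{\log X}),X^{1/200}]$ and path length $k\asymp\log\log X/\sqrt{\log\log\log X}$, chosen so that $(\log\log X)^k$ just beats $\log^{10}X$ while variance terms remain controllable. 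None of this appears in your plan; instead you assert ``one can show'' positive probability of a connecting chain.

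Finally, the profinite setup seems to be misread. For a random $\mathbf{n}\in\hat{\Z}$, the quantity $\mathbf{n}+a$ is a profinite integer, not a natural number: almost surely it is divisible by infinitely many primes, so statements like ``$\mathbf{n}+a$ has about $\log\log|\mathbf{n}+a|$ odd prime divisors'' or ``$\mathbf{n}+a\in\{1,2,p,2p\}$'' are not meaningful, and the degenerate-vertex worry you try to handle by Borel--Cantelli does not arise in the form you describe. (The genuine subtlety is the opposite one --- keeping the path clear of divisibility by $p^2$ for the \emph{large} primes $p$, which the paper handles by conditioning and the smooth-number bound after Proposition~\ref{longpath-2}.) In short: the zero-one observation is true but unhelpful, the combinatorial core is missing, and the probabilistic model is not the one actually in play.
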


Let us assume this theorem for now and finish establishing the contradiction required to conclude Theorem \ref{newmain2} and hence Theorem \ref{main-2}.  Let $a, b$ be integers.  By Theorem \ref{surely}, we see almost surely that if $\mu^2(\mathbf{n}+a)=\mu^2(\mathbf{n}+b)=1$, then there exists a finite path $a = c_1, c_2, \dots, c_k = b$ of distinct integers with $k \geq 1$ with the property that $\mu^2(\mathbf{n}+c_1)=\dots=\mu^2(\mathbf{n}+c_k)=1$ and such that $|c_{i+1}-c_i|$ divides $\mathbf{n}+c_i$ for each $i=1,\dots,k-1$.  (The $|c_{i+1}-c_i|$ are also odd primes, but we will discard this information as it will not be needed here.)
There are only countably many choices for $c_1,\dots,c_k$, so from countable additivity we see that for any $\eps > 0$ and $a,b \in \Z$ one can find a natural number $M$ (depending on $\eps,a,b$) with the property that with probability at least $1-\eps$, if $\mu^2(\mathbf{n}+a)=\mu^2(\mathbf{n}+b)=1$, then there exist distinct integers $a = c_1, c_2, \dots, c_k = b$ with $k \leq M$ and $|c_1|,\dots,|c_k| \leq M$, such that $\mu^2(\mathbf{n}+c_1)=\dots=\mu^2(\mathbf{n}+c_k)=1$ and $|c_{i+1}-c_i|$ divides $\mathbf{n}+c_i$ for each $i=1,\dots,k-1$.  

Note from the Chinese remainder theorem that for any fixed number of congruence conditions $n = a_i\ (q_i)$, the asymptotic probability that $n$ obeys these congruence conditions is equal to the probability that the random profinite integer $\mathbf{n}$ obeys the same congruence conditions.  Because of this, we can transfer the previous claim\footnote{Strictly speaking, conditions such as $\mu^2(\mathbf{n}+c_i)=1$ involve an infinite number of congruence conditions, but from the absolute convergence of $\sum_p \frac{1}{p^2}$ one can approximate these conditions by a finite number of congruence conditions at the cost of an arbitrarily small profinite probability or asymptotic probability; we leave the details to the reader.} from $\mathbf{n}$ to $n$.  That is to say, for any $\eps > 0$ and $a,b \in \Z$, there exists $M$ with the property that with asymptotic probability at least $1-\eps$, if $\mu^2(n+a)=\mu^2(n+b)=1$, then there exist distinct integers $a = c_1, c_2, \dots, c_k = b$ with $k \leq M$ and $|c_1|,\dots,|c_k| \leq M$, such that $\mu^2(n+c_1)=\dots=\mu^2(n+c_k)=1$ and $|c_{i+1}-c_i|$ divides $n+c_i$ for each $i=1,\dots,k-1$.  But from \eqref{mund}, this conclusion implies a.a.s. that $\mu \chi(n+c_{i+1}) = \mu \chi(n+c_i)$ for each $i=1,\dots,k-1$.  Chaining these equalities together, we conclude that with asymptotic probability at least $1-\eps$, we have
$$ \mu^2(n+a) = \mu^2(n+b) = 1  \implies \mu \chi(n+a) = \mu\chi(n+b).$$
Sending $\eps$ to zero, we conclude that this claim holds a.a.s. for any fixed choice of $a,b$.
Applying this for all $a,b \in \{1,\dots,h\}$, we conclude that for any fixed $h$, we have
$$ |\sum_{j=1}^h \mu \chi(n+j)| = \sum_{j=1}^h \mu^2(n+h) $$
a.a.s..  On the other hand, by using Theorem \ref{mrt-3} (treating the contribution of odd and even $n+j$ separately) we see that
$$ |\sum_{j=1}^h \mu \chi(n+j)| \leq \eps h $$
with asymptotic probability at least $1-\eps$, if $\eps>0$ and $h$ is sufficiently large depending on $\eps$.  Thus we see that the quantity $\sum_{j=1}^h \mu^2(n+h)$ has an asymptotic expectation of at most $2 \eps h$.  On the other hand, from linearity of expectation this asymptotic expectation is $\frac{h}{\zeta(2)}$, and one obtains a contradiction if $\eps$ is small enough and $h$ is large enough.

It remains to establish Theorem \ref{surely}.  In next section we will make several reductions, some of which are easy and some of which are more involved. In the following paragraph we give a sketch of all the reductions, forgetting about some additional technical conditions on the lengths of paths between elements, etc.. 

First, we will show that instead of showing that almost surely any vertices $a$ and $b$ are connected, it is enough to show that almost surely $0$ and $X$ are connected for any large enough odd $X$. Then, we will show that it is enough to consider the graph with a slight extension of the vertex set, where instead of requiring $\mu^2(\mathbf{n}+a) = 1$, one only requires that $\mathbf{n}+a$ is not divisible by $p^2$ for any prime $p \leq (\log X)^5$. Next, we will show that instead of connecting $0$ to all large enough vertices $X$, it is enough to connect it to at least $X^{9/10}$ even elements up to $X/10$, which means that it is enough to show that, for all $X' \asymp X$, $0$ is almost surely connected to at least one element in $[X'-X^{1/20}, X']$. Then, we will show that this follows if we can show that $0$ is connected to at least $\log^{10} X$ odd elements in $[0, X^{1/100}]$. Thus we will dramatically reduce the number of distinct vertices we need to connect $0$ to, and this last claim is shown through a very careful study of paths of length somewhat shorter than $\log \log x$ in Section~\ref{conclude-sec}.

\section{Initial reductions}\label{initial-sec}

For the remainder of this paper, we use the usual asymptotic notation of writing $X \ll Y$, $Y \gg X$, or $X = O(Y)$ to denote the estimate $|X| \leq CY$ for some absolute constant $C$, and write $X \asymp Y$ for $X \ll Y \ll X$.

We now turn to the formal proof of Theorem \ref{surely}.  We begin by deducing this theorem from the following claim.

\begin{proposition}\label{longpath}  Let $X$ be a sufficiently large odd natural number, which we view as an asymptotic parameter going to infinity.  Then, with probability $1-o(1)$, if $0$ and $X$ both lie in ${\mathbf V}$, then there is a path in ${\mathbf G}$ from $0$ to $X$.
\end{proposition}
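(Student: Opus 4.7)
The plan is to derive Proposition~\ref{longpath} by a cascade of reductions that successively shrink the scale at which connectivity must be proved, essentially following the informal roadmap given at the end of Section~\ref{Graph}.

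\textbf{Macroscopic reduction.} First I would replace $\mathbf{V}$ by the larger ``local'' vertex set $\mathbf{V}'$ consisting of those $a\in\Z$ for which $\mathbf{n}+a$ is not divisible by $p^2$ for any prime $p\le(\log X)^5$; membership in $\mathbf{V}'$ then depends only on the residue of $\mathbf{n}$ modulo the fixed modulus $\prod_{p\le(\log X)^5}p^2$. A union bound using $\sum_{p>(\log X)^5}p^{-2}\ll(\log X)^{-5}$ shows that in $[-X,2X]$ the sets $\mathbf{V}$ and $\mathbf{V}'$ differ in only $O(X(\log X)^{-5})$ vertices with high probability, so it suffices to work with the corresponding graph $\mathbf{G}'$. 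Using the reflection $\mathbf{n}\mapsto X-\mathbf{n}$ (which preserves the joint law of the relevant residues), the task of joining $0$ to $X$ reduces to showing that $0$ is a.s.\ connected (in $\mathbf{G}'$) to at least $X^{9/10}$ even vertices in $[0,X/10]$: the reflected statement then supplies at least $X^{9/10}$ even vertices in $[9X/10,X]$ reachable from $X$, and a standard counting argument using the many odd primes $p\asymp X$ in appropriate residue classes produces a single edge closing the gap.

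\textbf{Mesoscopic reduction.} To build $X^{9/10}$ even vertices reachable from $0$, I would prove the stronger window-based statement that for every $X'\asymp X$, with probability $1-o(1)$ vertex $0$ is connected in $\mathbf{G}'$ to at least one vertex in the short interval $[X'-X^{1/20},X']$; covering $[0,X/10]$ by $\asymp X^{19/20}$ disjoint such windows then yields far more than $X^{9/10}$ reachable vertices. The ``single hit in $[X'-X^{1/20},X']$'' claim in turn will follow from the much more concentrated statement that $0$ is a.s.\ connected to at least $\log^{10}X$ odd vertices inside the tiny window $[0,X^{1/100}]$, combined with a first/second-moment computation: from $\log^{10}X$ ``launching pads'' $a\in[0,X^{1/100}]$, the event that some odd prime $p\asymp X'$ divides $\mathbf{n}+a$ with $a+p\in[X'-X^{1/20},X']\cap\mathbf{V}'$ happens with overwhelming probability by a Chinese-remainder-theorem computation on the independent residues of $\mathbf{n}$ modulo small primes.

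\textbf{Microscopic core.} The heart of the argument, and the main obstacle, is therefore the final microscopic claim: that $0$ is a.s.\ connected to $\gg\log^{10}X$ odd vertices in $[0,X^{1/100}]$. I would attack this, in the content of Section~\ref{conclude-sec}, by analyzing paths of length slightly less than $\log\log X$ emanating from $0$, where at each intermediate vertex $a\in\mathbf{V}'$ one ``branches'' through the odd prime divisors of $\mathbf{n}+a$ of moderate size. Since $\mathbf{n}$ is uniform modulo $\prod_{p\le(\log X)^5}p^2$, Mertens' theorem shows that a.s.\ there are about $\log\log X$ available primes at each step, so a first-moment computation suggests exponential growth of the reachable vertex set, comfortably reaching $\log^{10}X$ before the path length exceeds $\log\log X$. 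The delicate point is controlling dependencies between the ``offspring primes'' at different vertices and ruling out the branching process from collapsing onto too thin a set; I expect to handle this via a second-moment estimate that, together with the CRT-independence of $\mathbf{n}$ modulo distinct primes, lets one compare the process with an honest supercritical Galton--Watson tree and extract the required lower bound on the size of the reachable set.
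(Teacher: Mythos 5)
Your cascade of reductions follows the same overall roadmap as the paper (enlarge to a $w$-rough-squarefree vertex set, reduce to hitting many even vertices in $[0,X/10]$, reduce to hitting one vertex in a short window, reduce to hitting $\log^{10}X$ odd vertices in $[0,X^{1/100}]$, then a second-moment path count). However, two of the links as you state them do not actually hold, and a third is incomplete.

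\textbf{The ``single edge closing the gap'' fails.}  After conditioning, the sets $A\subset[0,X/10]$ (even, reachable from $0$) and $B\subset[9X/10,X]$ (odd, reachable from $X$) are \emph{deterministic} subsets of size $\gg X^{9/10}$ with no control on their additive structure.  A single edge from $a\in A$ to $b\in B$ requires $b-a$ to be \emph{prime}, and for adversarially structured $A,B$ (e.g.\ $A,B$ concentrated in fixed classes modulo $3,5,\dots$) the number of pairs with $b-a$ prime could be far smaller than the heuristic $|A||B|/\log X$; there is no known Goldbach-type input to rule this out.  This is exactly why the paper uses \emph{length-$3$} paths $a,\ a-p_1,\ a-p_1+p_2,\ b$ in Lemma~\ref{le:ABconnect}: the underlying counting problem $b-a=-p_1+p_2-p_3$ is a Vinogradov three-primes problem (Proposition~\ref{prop:Vinogradov}), which gives $\asymp X^2/\log^3X$ representations for \emph{every} odd difference, independent of the structure of $A$ and $B$.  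You need this robustness.

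\textbf{The direct jump from $[0,X^{1/100}]$ to $[X'-X^{1/20},X']$ is quantitatively hopeless.}  From a launching pad $a$, a useful prime $p$ must satisfy $a+p\in[X'-X^{1/20},X']$, i.e.\ $p$ lies in an interval of length only $X^{1/20}$ at height $\asymp X$.  The chance any such $p$ divides $\mathbf{n}+a$ is $\asymp X^{1/20}/(X\log X)$, and summing over your $\log^{10}X$ launching pads gives an expected count $\ll X^{1/20-1}\log^9X=o(1)$.  No first- or second-moment trick rescues this; the expectation is already vanishing.  What the paper does instead is a multi-scale ``hopping'' argument (scales $R_j=100^{1-j}X'$, Lemma~\ref{load}) in which at each stage one has $\gg R_{j+1}$ good targets at the next scale rather than a tiny window, so that Lemma~\ref{le:ABconnect} can be applied with $|A||B|\gg R_j\log^{10}X$.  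The size of the target set must shrink \emph{gradually} in step with the scale, not in one jump.

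\textbf{The reduction from $\mathbf{V}$ to $\mathbf{V}'$ is not a mere union bound on vertices.}  Knowing that $\mathbf{V}\triangle\mathbf{V}'$ is sparse in $[-X,2X]$ does not let you assume a $\mathbf{G}'$-path avoids the bad vertices.  You must (i) restrict to paths of \emph{bounded length} (here $O(\log^2X)$), (ii) condition on $\mathbf{n}\bmod p$ for $p\leq 20X$ and $\mathbf{n}\bmod p^2$ for $p\leq w$ so the $\mathbf{G}'$-path becomes deterministic, and then (iii) estimate the \emph{conditional} probability that all $O(\log^2X)$ vertices $\mathbf{n}+a$ on the path survive to full squarefreeness.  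This in turn requires bounding the number of prime factors of $\mathbf{n}+a$ in the range $(w,20X]$ (the paper shows it is $O(\log^2X)$ almost surely, using a smooth-number estimate), so that the conditional failure probability is $O(\log^4X/w)=o(1)$.  Your sketch omits both the path-length constraint and this conditional computation, and cannot be made to work without them.

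Your description of the microscopic core (a second-moment estimate over paths of length slightly below $\log\log X$, comparing with a branching process) is broadly in the spirit of the paper's argument, but note the paper does not compare with a Galton--Watson tree; it directly performs a weighted second-moment path count, where the length $k\asymp\log\log X/\sqrt{\log\log\log X}$ is chosen so that $(\log\log X)^k$ beats $\log^{10}X$ while collision terms of size $\exp(O(k))k^{2k}\exp(-\sqrt{\log X})$ remain negligible.  That tuning, and the separate treatment of ``aligned'' collisions ($i_l=l$) versus the rest, is essential and is not captured by a generic branching-process comparison.
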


Let us see how Proposition \ref{longpath} implies Theorem \ref{surely}.  Observe that the random graph ${\mathbf G}$ is stationary, in the sense that any translate of ${\mathbf G}$ has the same distribution as ${\mathbf G}$.  Thus it suffices to show that for any natural number $h$, one almost surely has $0$ and $h$ connected in ${\mathbf G}$ whenever $0, h$ both lie in ${\mathbf G}$.  Stationarity also shows that Proposition \ref{longpath} automatically extends to even $X$ as well as odd $X$, since one can write a large even number as the sum of two large odd ones.

Let $w$ be a natural number (larger than $h$), and let $W := \prod_{p\leq w} p$.   Note that if $0$ and $h$ both lie in ${\mathbf V}$, then $\mathbf{n}$ and $\mathbf{n}+h$ are not divisible by $p^2$ for any $p \leq w$. Conditioning on the event that $0$ and $h$ both lie in ${\mathbf V}$, we see that $\mathbf{n}+W^2$ is not divisible by $p^2$ for any $p \leq w$, and an application of the union bound shows that $\mathbf{n}+W^2$ is square-free with probability $1-O(1/w)$; that is to say $W^2 \in {\mathbf V}$ with probability $1-O(1/w)$.  Applying Proposition \ref{longpath} (and stationarity) we conclude that conditionally on $0$ and $h$ both lying in ${\mathbf V}$, we have a path in ${\mathbf G}$ from $0$ to $h$ through $W^2$ with probability $1- O(1/w) - o(1)$ as $W^2 \to \infty$.  Sending $w \to \infty$, we obtain Theorem \ref{surely}.

It remains to prove Proposition \ref{longpath}.  
Let $X$ be a sufficiently large odd natural number, viewed as an asymptotic parameter going to infinity.  It is convenient to work with a slight enlargement ${\mathbf G}_X$ of ${\mathbf G}$ with slightly more vertices.  Set
$$ w := \log^5 X $$
and define $\mu^2_X(n)$ for a profinite integer $n$ to equal one when $n$ is not divisible by $p^2$ for any $p \leq w$, and zero otherwise, and set ${\mathbf V}_X := \{ a \in \Z: \mu^2_X(\mathbf{n}+a) = 1 \}$; then ${\mathbf V}_X$ contains ${\mathbf V}$, and a vertex $a$ of ${\mathbf V}_X$ lies in ${\mathbf V}$ unless $p^2 | \mathbf{n}+a$ for some $p>w$.

Define $\mathbf{G}_X$ to be the random graph with vertex set ${\mathbf V}_X$, and two distinct elements $a,b$ of ${\mathbf V}_X$ connected by an edge if $|a-b|$ is an odd  prime dividing $\mathbf{n}+a$.  Thus $\mathbf{G}$ is the restriction of $\mathbf{G}_X$ to ${\mathbf V}$.  We will first show that it suffices to prove

\begin{proposition}\label{longpath-2}  Let $X$ be a sufficiently large odd natural number, which we view as an asymptotic parameter going to infinity.    Then, with probability $1-o(1)$, if $0$ and $X$ both lie in ${\mathbf V}_X$, then there is a path in ${\mathbf G}_X$ from $0$ to $X$ of length at most $10 \log^2 X$ and contained in $[-10X, 10X]$.
\end{proposition}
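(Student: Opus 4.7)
The plan is a chain of reductions, ending in a local combinatorial estimate, followed by a bootstrap and a meeting-in-the-middle argument.

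\emph{Meeting in the middle.} By translation invariance of the distribution of $\mathbf{n}$, the roles of $0$ and $X$ are symmetric. I would first reduce Proposition \ref{longpath-2} to showing that, conditional on $0 \in \mathbf{V}_X$, with probability $1 - o(1)$ the vertex $0$ is connected in $\mathbf{G}_X$, by paths of length at most $5 \log^2 X$ contained in $[-10X, 10X]$, to at least $X^{9/10}$ distinct even elements of $[0, X/10]$. The same statement applied to $X$ produces $X^{9/10}$ vertices reachable from $X$ in $[9X/10, X]$. A closing edge $\{a,b\}$ (with $|b - a|$ an odd prime dividing $\mathbf{n}+a$) between the two reachable sides is then produced by a counting argument: of the $\asymp X^{9/5}$ candidate pairs $(a,b)$, at least one has the gap $|b - a|$ equal to one of the typical prime divisors of $\mathbf{n}+a$ in the range $\asymp X$. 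Concatenation yields the desired path $0 \to a \to b \to X$.

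\emph{Multiplicative expansion from a seed.} To produce these $X^{9/10}$ reachable vertices from $0$, I would bootstrap from a single seed. At depth one, the expected count of odd primes $p \in [w, X^{1/100}]$ with $p \mid \mathbf{n}$ and $p \in \mathbf{V}_X$ is $\asymp \log \log X$, by Mertens together with the sieve tail $\sum_{p > w} p^{-2} = o(1)$ that controls the definition of $\mathbf{V}_X$. Given this count with high probability (via a second-moment estimate), I iterate: at each new generation, every currently reached vertex $v$ plays the same role as $0$ did (by translation invariance of the distribution of $\mathbf{n}$), supplying a further $\asymp \log \log X$ neighbours each. Approximate independence across coprime moduli (Chinese remainder theorem) guarantees that, aggregated over all $v$, most of the new vertices are genuinely fresh. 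Thus the count of reached vertices multiplies by $\asymp \log\log X$ per generation, and after $O(\log X / \log\log\log X)$ generations it exceeds $X^{9/10}$, with total path length $\ll \log^2 X$ and all vertices lying within distance $o(X)$ of the origin. A final parity/shift step concentrates the reached vertices into even elements of $[0, X/10]$.

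\emph{Main obstacle.} The hard part—and the principal content of Section~\ref{conclude-sec}—will be making precise the ``freshness'' claim in the expansion: controlling the joint distribution of the many conditioning events of the form ``$q \mid \mathbf{n} + v$'' across previously reached vertices $v$ and candidate primes $q$, and showing that collisions between distinct paths landing at a common vertex are negligible in number. This requires careful second-moment bounds in combination with union bounds over potential collision configurations, all compatible with the CRT-based near-independence of prime-divisibility events at coprime moduli. Once this freshness is in hand, the meeting step closes the loop and Proposition~\ref{longpath-2} follows.
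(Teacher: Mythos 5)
Your high-level architecture --- reduce to producing $\asymp X^{9/10}$ reachable vertices near $0$ and near $X$, then close the two sides with a short connecting path --- matches the paper's reduction from Proposition~\ref{longpath-2} to Proposition~\ref{longpath-3} via Lemma~\ref{le:ABconnect}.  However, two of the component steps have genuine gaps.

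\emph{The closing step.}  You propose to close with a single edge, arguing that among $\asymp X^{9/5}$ pairs $(a,b)\in A\times B$, at least one has $|b-a|$ equal to a prime dividing $\mathbf{n}+a$.  This requires the number of pairs with prime difference to be $\gg |A||B|/\log X$, which is a Goldbach-type statement and is \emph{not} guaranteed: after the conditioning, $A$ and $B$ are deterministic sets of size $X^{9/10}$ and nothing prevents them from being concentrated in residue classes modulo a small prime $q$ in such a way that $q\mid b-a$ for a positive proportion of pairs.  The paper sidesteps this by using a length-three connecting path: with three free primes $p_1,p_2,p_3$ in fixed disjoint intervals, Vinogradov's theorem (Proposition~\ref{prop:Vinogradov}, Lemma~\ref{le:ABconnect}) guarantees $\asymp X^2/\log^3 X$ representations of $b-a=-p_1+p_2-p_3$ for \emph{every} pair, irrespective of any arithmetic structure in $A$ or $B$, and a second-moment bound then closes the loop.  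A single edge gives you no such representation count to work with.

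\emph{The expansion step.}  Your ``multiplicative expansion from a seed'' would need $O(\log X/\log\log\log X)$ generations to reach $X^{9/10}$ vertices.  This is far beyond what the collision/freshness analysis can support: in the paper's second-moment computation (Section~\ref{conclude-sec}), the error from collisions contributes terms of the shape $\sum_{r\geq1}\bigl(O(k/\log\log X)\bigr)^r$, which tends to zero only when the number of generations $k$ is $o(\log\log X)$.  The paper therefore chooses $k\asymp\log\log X/\sqrt{\log\log\log X}$ and settles for reaching merely $\log^{10} X$ odd vertices near $0$ (Proposition~\ref{longpath-5}).  Turning that tiny collection into $X^{9/10}$ reachable vertices in $[0,X/10]$ is then accomplished by a different mechanism entirely: the random target $X'$ and discrete Hardy--Littlewood maximal inequality (Proposition~\ref{prop:maximal}) guarantee that most short intervals contain enough ``good'' vertices, and a geometric sequence of scale-doubling hops (Lemma~\ref{load}, iterating Lemma~\ref{le:ABconnect} through scales $R_j=100^{1-j}X'$) bridges from $\log^{10}X$ vertices to a single vertex in $[X'-X^{1/20},X']$, whence averaging gives $\gg X^{19/20}$ reachable vertices.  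Your proposal omits this multi-scale bridging machinery, and the direct iteration you describe will not carry the freshness bound that far.

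In short: you have the correct skeleton but you have underestimated both the closing lemma (it genuinely needs a three-prime path, not one edge) and, more seriously, how far a single bootstrap expansion can be pushed before the collision error terms overwhelm the main term.
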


Let us see how Proposition \ref{longpath-2} implies Proposition \ref{longpath}.  The key point is that the restriction of ${\mathbf G}_X$ to $[-10X,10X]$ does not require knowledge of the entirety of the profinite random integer $\mathbf{n}$; only knowledge of the reductions $\mathbf{n} \hbox{ mod } p$ for $p \leq 20 X$  and $\mathbf{n} \hbox{ mod } p^2$ for $p \leq w$ is required.  The remaining components of $\mathbf{n}$ can then be used to restrict ${\mathbf G}_X$ to ${\mathbf G}$.  

Let us first show that, almost surely every number in $[\mathbf{n}-10X, \mathbf{n}+10X]$ has at most $\log^2 X$ distinct prime factors in the interval $[w, 20X]$. Notice that this property depends only on reductions modulo primes $p \leq 20X$. Write $W = \prod_{p \leq 20X} p = e^{20X(1+o(1))}$, and then it is enough to show that the number of integers in $[W, 2W)$ that have more than $\log^2 X$ distinct prime factors in $[w, 20X]$ is $o(W/X)$. Such numbers have $20X$-smooth part at least $w^{\log^2 X}$, and we get that the number of them is at most
\[
\sum_{\substack{W \leq mn \leq 2W \\ p \mid m \implies p > 20X \\ p \mid n \implies p \leq 20X \\ n \geq w^{\log^2 X}}} 1 \leq \sum_{\substack{m \leq 2W \\ p \mid m \implies p > 20X}} \sum_{\substack{w^{\log^2 X} \leq n \leq 2W/m \\ p \mid n \implies p \leq 20X}} 1 \ll \sum_{\substack{m \leq 2W}} \frac{2W}{m} (\log X)^{-c\log X \log \log X} \ll \frac{W}{X^{100}}
\]
for an absolute constant $c>0$, by the standard estimate (see e.g. \cite{bruijn}) that the number of $y$-smooth numbers up to $x$ is $u^{-(1+o(1))u}$ where $u = \log x/\log y \leq y^{1-\varepsilon}$ (in our case, to get an upper bound, we can take, for every $m$, $u = \log(w^{\log^2 X}) / \log (20X) \asymp \log X \log \log X$).

Now suppose we condition the reductions $(\mathbf{n} \hbox{ mod } p)_{p \leq 20 X}$ and $(\mathbf{n} \hbox{ mod } p^2)_{p \leq w}$ to be a value for which there is a path in ${\mathbf G}_X$ from $0$ to $X$ of length at most $10 \log^2 X$ contained in $[-10X, 10 X]$, and for which every number in $[\mathbf{n}-10X, \mathbf{n}+10X]$ has at most $\log^2 X$ prime factors in $[w, 20X]$.  After this conditioning, the residue classes $\mathbf{n} \hbox{ mod } p^2$ for $w < p \leq 20X$ are restricted to a single coset of $\Z/p\Z$ in $\Z/p^2\Z$, but are uniformly distributed in that coset, whereas the residue classes $\mathbf{n} \hbox{ mod } p^2$ for $p > 20X$ are uniformly distributed on all of $\Z/p^2 \Z$. Also, the $\mathbf{n} \hbox{ mod } p^2$ are independent in $p$ across all primes $p$.  Let $0 = a_1, a_2, \dots, a_k = X$ be a path in ${\mathbf G}_X$ from $0$ to $X$ of length $k \leq 10 \log^2 X$ contained in $[-10X, 10X]$. Then, for each $j = 1, \dotsc, k$, the number of primes $p \in [w, 20X]$ such that $p$ divides $\mathbf{n} + a_k$ is at most $\log^2 X$. Hence the probability that all of the $\mathbf{n}+a_i$ are not divisible by $p^2$ for any $p > w$ (which implies that this path lies in ${\mathbf G}$ and not just in ${\mathbf G}_X$) is at least
$$ \left(1 - \frac{k}{w}\right)^{\log^2 X} \times \prod_{p > 20X} \left(1-\frac{k}{p^2}\right).$$
From the bounds on $w,k$ we see that this expression is $1-o(1)$.
This then gives Proposition \ref{longpath} from Proposition \ref{longpath-2}.

We will want to substantially reduce the number of vertices to which we need to connect $0$. To do this, we will use the following lemma several times.
\begin{lemma}
\label{le:ABconnect}
Let $X$ be large and fix the residue classes $\mathbf{n} \hbox{ mod } p$ for $p \leq X$ and $\mathbf{n} \hbox{ mod } p^2$ for $p \leq w$. Let $A$ and $B$ be respectively subsets of odd and even integers in $[0, X] \cap {\mathbf V}_X$ such that $|A||B| \gg X \log^{10} X$. Then, with conditional probability $1-O(\log^{-2} X)$, there is a path of length $3$ in ${\mathbf V}_X$ contained in $[-8 X, 8 X]$ connecting an element $a \in A$ with an element $b \in B$.
\end{lemma}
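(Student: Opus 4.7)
The plan is a second moment argument applied to a suitable count $Z$ of length-$3$ paths in $\mathbf{G}_X$ connecting $A$ to $B$. Let $Z$ count the tuples $(a, b, p_1, p_2, p_3, \sigma_1, \sigma_2, \sigma_3)$ with $a \in A$, $b \in B$, distinct odd primes $p_1 \in (X, 3X]$ and $p_2, p_3 \in (X, 2X]$, signs $\sigma_i \in \{\pm 1\}$, $b - a = \sigma_1 p_1 + \sigma_2 p_2 + \sigma_3 p_3$, intermediate vertices $c_1 := a + \sigma_1 p_1$ and $c_2 := c_1 + \sigma_2 p_2$ both in $\mathbf{V}_X$, and all three edges $\{a, c_1\}, \{c_1, c_2\}, \{c_2, b\}$ present in $\mathbf{G}_X$. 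Since $a, b \in [0, X]$ and each $p_i \leq 3X$, the four vertices of the path lie in $[-5X, 6X] \subset [-8X, 8X]$ as required. The conditioning fixes $\mathbf{n} \bmod p$ for $p \leq X$ and $\mathbf{n} \bmod p^2$ for $p \leq w$, leaving the residues $\mathbf{n} \bmod p$ uniform and independent across primes $p > X$ and making membership in $\mathbf{V}_X$ deterministic; thus for distinct $p_1, p_2, p_3 > X$ the three edge events are independent with probabilities $1/p_i$, giving a per-tuple conditional path probability of exactly $1/(p_1 p_2 p_3)$. The aim is to show $\mathbb{E}[Z] \gg \log^7 X$ and $\mathrm{Var}(Z) \ll \mathbb{E}[Z]^2 / \log^2 X$, from which Chebyshev's inequality delivers the claim $\Pr[Z = 0] = O(\log^{-2} X)$.

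For the expectation, interchange summation orders and apply two standard inputs. First, a Vinogradov- or sieve-type count shows that for every odd $M \in [-X, X]$, the number $T(M)$ of representations $M = \sigma_1 p_1 + \sigma_2 p_2 + \sigma_3 p_3$ in the prescribed prime ranges is $\gg X^2 / \log^3 X$, uniformly in $M$; the slightly wider range $(X, 3X]$ for $p_1$ is crucial here, because it prevents the representation count from degenerating at small $|M|$ (which would otherwise preclude handling choices of $A, B$ concentrated in a short interval). Second, Dirichlet--Siegel--Walfisz equidistribution of primes in the relevant ranges modulo $q^2$ for each $q \leq w$ shows that a positive proportion $c_w := \prod_{q \leq w}(1 - 1/q^2) + o(1)$ of the candidate tuples satisfy the $\mathbf{V}_X$-conditions on $c_1$ and $c_2$. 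Combined with the per-tuple probability $\asymp 1/X^3$ and the hypothesis $|A||B| \gg X \log^{10} X$, these inputs yield $\mathbb{E}[Z] \gg c_w^2 |A||B| / (X \log^3 X) \gg \log^7 X$.

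For the variance, write $\mathrm{Var}(Z)$ as a sum of covariances of path-indicator pairs and decompose according to the intersection pattern of the two ordered prime triples. Disjoint triples yield independent indicators and zero covariance; for each of the finitely many non-disjoint overlap patterns, a shared prime $p > X$ forces the two corresponding vertices $v, v'$ to satisfy $v \equiv v' \pmod p$, and since $|v - v'| \leq 8X$ only $O(1)$ values of $(v - v')/p$ are permitted, so that in the generic case equality $v = v'$ is forced. A direct count per pattern, combined with $\min(|A|, |B|) \gg \log^{10} X$ (which follows from $|A||B| \gg X \log^{10} X$ together with $|A|, |B| \leq X$), bounds each pattern's contribution by $O(\mathbb{E}[Z]^2 / \log^9 X)$. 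Summing over the $O(1)$ patterns gives $\mathrm{Var}(Z) \ll \mathbb{E}[Z]^2 / \log^9 X$, and Chebyshev's inequality yields $\Pr[Z = 0] \ll 1/\log^9 X = O(\log^{-2} X)$.

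The hardest part will be the variance step: several overlap patterns must be enumerated and bounded separately (any of the three positions in the first triple can coincide with any of the three in the second, and multi-prime coincidences must also be checked), with each pattern relying on the rigidity that a shared edge prime $p > X$ confines any two vertices on the same side of that edge to a coset of size $O(1)$ inside $[-8X, 8X]$. A minor but important technical point on the expectation side is the uniform-in-$M$ lower bound on $T(M)$ at small $|M|$, which is precisely what forces the asymmetric choice of range $(X, 3X]$ for the first prime in place of $(X, 2X]$ for all three.
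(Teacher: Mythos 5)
Your overall strategy matches the paper's: a second moment argument over length-$3$ paths whose step sizes come from a Vinogradov-type ternary Goldbach count, with the threshold $|A||B| \gg X\log^{10}X$ exactly calibrated against the $X^2/\log^3 X$ representation count and the $1/(p_1p_2p_3)$ per-path probability. Two things deserve attention, one of which is a genuine error.

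First, the claimed bound $\mathrm{Var}(Z) \ll \E[Z]^2/\log^9 X$ cannot be right. The diagonal pairs $s=s'$ alone contribute $\sum_s \Pr[E_s](1-\Pr[E_s]) \asymp \E[Z]$ to the variance (each individual $\Pr[E_s]\asymp X^{-3}$ is negligible), and at the threshold $|A||B|\asymp X\log^{10}X$ you have $\E[Z]\asymp\log^7 X$, so $\mathrm{Var}(Z)\gtrsim \E[Z]\asymp \E[Z]^2/\log^7 X$, which is strictly larger than $\E[Z]^2/\log^9 X$. The error is harmless for the conclusion since $1/\log^7 X = O(1/\log^2 X)$ — indeed the paper only establishes (and only needs) that the off-diagonal "exceptional" pairs contribute $O(\E[Z]^2/\log^2 X)$, which is the sharp thing obtainable from $|A||B|\gg X\log^{10}X$ — but the factor $\log^9 X$ as stated is false and should be corrected to $\log^2 X$ (and the source of that power should be traced honestly back to the $\log^{10}$ in the hypothesis rather than inferred from a crude pattern count).

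Second, and more structurally, your choice of overlapping intervals $(X,2X]$ for both $p_2,p_3$ (together with sign variables $\sigma_i$) makes the variance analysis genuinely harder than you indicate. The paper chooses \emph{disjoint} intervals $p_1\in(X,3X]$, $p_2\in(5X,7X]$, $p_3\in(3X,5X]$ with fixed signs $b=a-p_1+p_2-p_3$ precisely so that (a) cross-collisions $p_i=p'_j$ with $i\neq j$ are impossible, and (b) a same-position collision $p_i=p'_i$ \emph{rigorously} forces the corresponding vertices to coincide, because each $p_i$ is chosen larger than the diameter of the interval in which its left endpoint ranges ($p_1>X\geq\operatorname{diam}(\{a\})$, $p_2>3X\geq\operatorname{diam}(\{a-p_1\})$, $p_3>X\geq\operatorname{diam}(\{b\})$). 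In your setup a shared prime $p>X$ only gives $v\equiv v'\pmod p$ with $|v-v'|\leq 8X$, which leaves $O(1)$ residual cases rather than forcing $v=v'$, and the cross-collisions $p_2=p'_3$ etc.\ are nonempty; you wave at "the generic case" but these non-generic cases are exactly what the carefully staggered intervals in the paper are there to eliminate. The fix is not hard — either adopt the paper's disjoint, increasingly wide intervals, or enumerate and bound the additional collision patterns — but as written the variance step has a real gap.
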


\begin{proof}
Let $S$ denote the set of quadruples $(a,b,p_1,p_2,p_3)$ obeying the following constraints:
\begin{itemize}
\item We have $a \in A, b \in B$, and $p_1,p_2,p_3$ are primes in $(X, 3X]$, $(5X, 7X]$, $(3X, 5X]$ respectively obeying the equation
$$ b = a - p_1 + p_2 - p_3.$$
\item $a-p_1$ and $a-p_1+p_2$ lie in ${\mathbf V}_X$, that is to say $\mathbf{n}+a-p_1, \mathbf{n}+a-p_1+p_2$ are not divisible by $p^2$ for any $p \leq w$.
\end{itemize}

Note that the set $S$ is deterministic due to our conditioning.  A routine application of the circle method (see Proposition \ref{prop:Vinogradov} below) shows that each pair $(a,b) \in A \times B$ contributes $\asymp \frac{X^2}{\log^3 X}$ quintuples to $S$, and so
\begin{equation}\label{slower}
 |S| \asymp |A| |B| X^2 / \log^3 X.
\end{equation}
Each quintuple $(a,b,p_1,p_2,p_3)$ will yield a path $a, a-p_1, a-p_1+p_2, a-p_1+p_2-p_3=b$ of the desired form provided that one has the divisibility conditions
$$ p_1 | \mathbf{n} + a; \quad p_2 | \mathbf{n} + a - p_1; \quad p_3 | \mathbf{n} + a - p_1 + p_2.$$
Let $E_{(a,b,p_1,p_2,p_3)}$ be the event that these three divisibility conditions occur; thus it suffices to show that
$$ \P\left( \bigvee_{s \in S} E_s \right) = 1-O((\log X)^{-2}).$$
We use the second moment method. By the Cauchy-Schwarz inequality, 
\[
\begin{split}
\P( \bigvee_{s \in S} E_s ) \geq \frac{( \E \sum_{s \in S} 1_{E_s})^2}{\E (\sum_{s \in S} 1_{E_s})^2} = \frac{\sum_{s,s' \in S} \P(E_s) \P(E_{s'})}{\sum_{s,s' \in S} \P( E_s \cap E_{s'} )}.
\end{split}
\]
Hence it suffices to show that
\begin{equation}\label{duplex}
\sum_{s,s' \in S:  \P( E_s \cap E_{s'} ) > \P(E_s) \P( E_{s'} ) } \P( E_s \cap E_{s'} )  \ll \frac{1}{\log^2 X} \sum_{s,s' \in S} \P(E_s) \P(E_{s'}).
\end{equation}

On the one hand, from the Chinese remainder theorem we have
$$ \P( E_s ) = \frac{1}{p_1 p_2 p_3} \asymp \frac{1}{X^3}$$
and so by \eqref{slower}
\begin{equation}
\label{eq:PPasymp} 
\sum_{s,s' \in S} \P(E_s) \P(E_{s'}) \asymp \frac{|A|^2 |B|^2}{X^2 \log^6 X}.
\end{equation}
On the other hand, if $s=(a,b,p_1,p_2,p_3)$ and $s'=(a',b',p'_1,p'_2,p'_3)$ lie in $S$, then $\P( E_s \cap E_{s'} )$ is equal to either $\P(E_s) \P(E_{s'})$ or $0$ unless at least one of the following three situations occur:
\begin{itemize}
\item[(i)] $p_1 = p'_1$ and $a = a'$.
\item[(ii)] $p_2 = p'_2$ and $a-p_1 = a'-p'_1$.
\item[(iii)] $p_3=p'_3$ and $b = b'$.
\end{itemize}
(Here we are using the fact that $p_1,p_2,p_3$ lie in disjoint intervals, and $p_1$ is larger than the diameter of the interval in which $a$ ranges, $p_2$ is larger than the diameter of the interval in which $a-p_1$ ranges and $p_3$ is larger than the diameter of the interval in which $b$ ranges.) Furthermore, in these exceptional
cases, $\P( E_s \cap E_{s'} )$ may be bounded by $O( \frac{1}{X^{6-j}} )$, where $j=1,2,3$ is the number of situations (i), (ii), (iii) that are occurring simultaneously.  

Meanwhile, when $j$ of the situations occur simultaneously, a simple degree of freedom counting gives $O(|A||B| (X/\log X)^2)$ choices for $s$ and then $O((X/\log X)^{3-j})$ choices for $s'$ (one can choose the primes $p'_j$ not fixed by the first conditions in (i)--(iii) freely, and then everything else is fixed by the conditions and definition of $S$). Thus the left-hand side of \eqref{duplex} is bounded by
$$O\left( \sum_{j=1}^3 |A||B| \left(\frac{X}{\log X}\right)^{5-j} \cdot \frac{1}{X^{6-j}}\right) = O\left(\frac{|A||B|}{X (\log X)^2}\right) $$
and~\eqref{duplex} follows from \eqref{eq:PPasymp} since $|A||B| \geq X(\log X)^{10}$.
\end{proof}

Now we use the previous lemma to make a reduction, in which we content ourselves with connecting $0$ to many vertices in an interval $[0,X/10]$, rather than trying to reach a specific vertex $X$.  Namely, we reduce Proposition \ref{longpath-2} to

\begin{proposition}\label{longpath-3}  Let $X$ be a sufficiently large odd natural number, which we view as an asymptotic parameter going to infinity.    Then, with probability $1-o(1)$, if $0$ lies in ${\mathbf V}_X$, then there are at least $X^{9/10}$ even elements of $[0,X/10]$ which are connected to $0$ in ${\mathbf G}_X$ by a path of length at most $2\log^2 X$ contained in $[0,X/10]$.
\end{proposition}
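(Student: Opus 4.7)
Proposition~\ref{longpath-3} will be proved through two reductions, culminating in an auxiliary claim about connecting $0$ to many odd vertices in the small neighborhood $[0, X^{1/100}]$ of the origin; the latter will be the subject of Section~\ref{conclude-sec}. Throughout we condition on the event $0\in{\mathbf V}_X$ (which has positive conditional probability, uniformly in $X$).

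The first reduction is a covering/Markov argument. Partition $[0, X/10]$ into $\asymp X^{19/20}$ sub-intervals $I_{X'}:=[X'-X^{1/20}, X']$ of length $X^{1/20}$. For Proposition~\ref{longpath-3} it suffices to show that, for each such $X'$, the conditional probability that \emph{no} vertex of $I_{X'}\cap{\mathbf V}_X$ is connected to $0$ in ${\mathbf G}_X$ by a path of length $\leq 2\log^2 X$ contained in $[0, X/10]$ is $O(\log^{-2} X)$: then the expected number of unreached intervals is $O(X^{19/20}/\log^2 X)$, and Markov yields at least $X^{19/20}/2\gg X^{9/10}$ reached intervals (hence reached distinct even vertices) with probability $1-o(1)$.

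The second reduction is the bridge from $0$ to $I_{X'}$. Assume the auxiliary claim, to be proved in Section~\ref{conclude-sec}, that almost surely there is a set $A\subset [0, X^{1/100}]\cap{\mathbf V}_X$ of at least $\log^{10} X$ odd vertices each connected to $0$ in ${\mathbf G}_X$ by short paths staying in $[0, X^{1/100}]$. One would like to apply Lemma~\ref{le:ABconnect} to the pair $(A,\,I_{X'}\cap{\mathbf V}_X)$; however $|A|\cdot|I_{X'}\cap{\mathbf V}_X|\asymp X^{1/20}\log^{10} X$ falls far short of the $X\log^{10} X$ threshold required by that lemma, so a direct application is not available. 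The natural remedy is to iterate Lemma~\ref{le:ABconnect} at a geometrically-growing sequence of scales $Y_k := X^{2^k/100}$ for $k=0,1,\dots,\asymp \log\log X$: at each scale, one uses the reached set from the previous scale as the odd (or even) source, and applies the second-moment argument underlying Lemma~\ref{le:ABconnect} to produce many new reached vertices at the new scale. After $O(\log\log X)$ rounds the enlarged reached set has grown sufficiently (in both size and spread) for Lemma~\ref{le:ABconnect} to apply directly at scale $\asymp X$ and exhibit a length-$3$ path to $I_{X'}$. The total path length is then $O(\log\log X)\ll 2\log^2 X$, comfortably within the allowed range.

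The \emph{main obstacle} is this iterative amplification: at each scale one needs not merely the existence of a connecting path (which is all Lemma~\ref{le:ABconnect} directly provides), but a quantitative lower bound on the number of \emph{distinct} new reached vertices, in order to have enough material to feed the next iteration. Establishing this requires sharpening the second-moment argument of Lemma~\ref{le:ABconnect} to count distinct endpoints rather than total $(a,b,p_1,p_2,p_3)$ quintuples, and to decouple the divisibility events $\{p\mid \mathbf{n}+c\}$ across different rounds by using primes in disjoint size ranges at each scale. Moreover, the accumulated failure probabilities over the $O(\log\log X)$ rounds and the union bound over the $\asymp X^{19/20}$ target intervals $I_{X'}$ must combine to an $o(1)$ loss, which is tight and requires careful bookkeeping of the error terms.
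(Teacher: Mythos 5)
Your first reduction --- covering $[0,X/10]$ by short windows and aggregating via Markov --- is in the same spirit as the paper's reduction to Proposition~\ref{longpath-4}, though the paper only establishes the per-window statement for a uniformly random $X' \in [X/40, X/20]$ rather than uniformly over every window; this is a cosmetic difference and both formulations yield the $X^{9/10}$ reached vertices.

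The substantive gap is in your second reduction, which you yourself flag as the main obstacle. You observe correctly that Lemma~\ref{le:ABconnect} cannot be applied directly because $|A| \cdot |I_{X'} \cap \mathbf{V}_X|$ is far below the required threshold, and you propose to iterate the second-moment argument across growing scales so as to amplify the reached set. That amplification would require a lower bound on the number of \emph{distinct} vertices reached at each round, which is a much harder variant of the second-moment computation than the one proving Lemma~\ref{le:ABconnect}, and is not something the paper attempts. The paper's actual route avoids any amplification of the reached set. It calls an even $a \in \mathbf{V}_X$ \emph{good} if $a$ is connected to at least $\log^{10} X$ odd vertices in $[a, a+X^{1/100}]$; shows by Proposition~\ref{longpath-5}, stationarity and linearity of expectation that all but $o(X)$ of the even vertices of $\mathbf{V}_X \cap [-X,X]$ are good; and then applies the discrete Hardy--Littlewood maximal inequality (Proposition~\ref{prop:maximal}) to conclude that a random $X' \in [X/40, X/20]$ is, with probability $1-o(1)$, \emph{excellent}, meaning that for every radius $r \leq X$, all but $o(r)$ of the even vertices of $\mathbf{V}_X$ near $X'$ are good. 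Given an excellent $X'$, one hops \emph{toward} $X'$ through $J \asymp \log X$ geometrically-shrinking scales $R_j := 100^{1-j} X'$: at each hop, $A$ is the reached odd set near the current landing point (of size $\geq \log^{10} X$, because the landing point was chosen good), and $B = A_{j+1}$ is the set of good even vertices in the next window (of size $\gg R_{j+1}$, because $X'$ is excellent), so that $|A||B| \gg R_j \log^{10} X$ meets the threshold of Lemma~\ref{le:ABconnect} at every scale without the reached set ever having to grow beyond $\log^{10} X$. The accumulated path length is $J(\log X + 3) \leq 2\log^2 X$, so the stated budget is in fact used in full --- not the $O(\log\log X)$ hops you anticipate. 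The goodness/excellence bootstrap, and in particular the maximal-function step, is the key device your plan is missing.
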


We now explain why Proposition \ref{longpath-3} implies Proposition \ref{longpath-2}.  For any integer $a$, let $E_a$ be the event that $a$ lies in ${\mathbf V}_X$, 
and there are at least $X^{9/10}$ elements of $[a,a+X/10]$ of the same parity as $a$ which are connected to $a$ in ${\mathbf G}_X$ by a path of length at most $2\log^2 X$ contained in $[a,a+X/10]$.  By stationarity and Proposition \ref{longpath-3}, we see that for all $a$, one has with probability $1-o(1)$ that $E_a$ holds whenever $a \in {\mathbf V}_X$.  In particular, with probability $1-o(1)$, one has $E_0 \cap E_X$ holding whenever $0,X \in {\mathbf V}_X$.  

Now suppose that $E_0 \cap E_X$ holds.  This event only depends on the residue classes $\mathbf{n} \hbox{ mod } p$ for $p \leq X/10$ and $\mathbf{n} \hbox{ mod } p^2$ for $p \leq w$, so we now condition these residue classes to be deterministic.  We now have deterministic subsets $A, B$ of $[0,X/10]$ and $[X,11X/10]$ respectively of cardinality at least $X^{9/10}$ each, such that any element of $A$ (resp. $B$) is connected to $0$ (resp. $X$) by a path in ${\mathbf G}_X$ of length at most $2 \log^2 X$ contained in $[-10X,10X]$.  Furthermore, $A$ consists entirely of even numbers, and $B$ consists entirely of odd numbers. The claim now follows from Lemma \ref{le:ABconnect}.

Next, we reduce Proposition \ref{longpath-3} to the following variant, in which we connect $0$ to one element in a short interval, as opposed to many elements in a long interval:

\begin{proposition}\label{longpath-4}  Let $X$ be a sufficiently large odd natural number, which we view as an asymptotic parameter going to infinity.    Let $X'$ be an element of the interval $[X/40, X/20]$, chosen uniformly at random.  Then, with probability $1-o(1)$, if $0$ lies in ${\mathbf V}_X$, then there is an element of $[X'-X^{1/20}, X']$ that is connected to $0$ in ${\mathbf G}_X$ by a path of length at most $2\log^2 X$ contained in $[0,X/10]$.
\end{proposition}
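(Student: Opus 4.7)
The plan is to reduce Proposition~\ref{longpath-4} to the following intermediate claim, whose proof will constitute the main content of Section~\ref{conclude-sec}: with probability $1 - o(1)$, conditional on $0 \in {\mathbf V}_X$, there exist at least $\log^{10} X$ odd integers $a \in [0, X^{1/100}] \cap {\mathbf V}_X$ each of which is connected to $0$ in ${\mathbf G}_X$ by a path of length at most $\log\log X$ contained in $[0, X^{1/100}]$. Call this set of anchor vertices $A$. Granting this, the task is to extend one of the paths ending in $A$ to reach a vertex of the target interval $[X' - X^{1/20}, X']$ via a \emph{monotonically increasing} path of length $k$ slightly larger than $\log X$ (still far below the allowed $2\log^2 X$). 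The monotonicity is crucial: it ensures that every intermediate vertex lies between $a \leq X^{1/100}$ and the endpoint $b \leq X/20$, and so remains in $[0, X/10]$ automatically.

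The heart of the argument is a multi-prime Vinogradov / circle-method count. For fixed $a \in A$ and $b$ in the target interval, the expected number of ordered $k$-tuples $(p_1,\dots,p_k)$ of odd primes $\leq X/20$ with $p_1 + \cdots + p_k = b - a$ and with each $p_i$ dividing ${\mathbf n} + a + p_1 + \cdots + p_{i-1}$ is essentially the coefficient $[z^{b-a}]$ of the generating function $\bigl(\sum_{p \leq X/20} z^p/p\bigr)^k$. Mertens' theorem gives $\sum_{p \leq X/20} 1/p \asymp \log\log X$, so the total mass of this generating function is $(\log\log X)^k$, and a standard circle-method calculation shows that for typical $N \asymp X$ the relevant coefficient is $\asymp (\log\log X)^k/(kX)$. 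Once $k$ exceeds $\log X/\log\log\log X$ this is $\gg 1$, and summing over the $\asymp X^{1/20}$ candidate endpoints $b$ and the $|A| \geq \log^{10} X$ starting points $a$ yields an expected number of valid paths that is a large positive power of $\log X$.

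To convert this large expectation into a probability of connection that is $1 - o(1)$, the plan is to use a Cauchy--Schwarz / second-moment argument in the spirit of the proof of Lemma~\ref{le:ABconnect}. The dominant correlations between pairs of paths come from pairs that share primes at common positions; these are controlled by the same Mertens / Vinogradov estimates applied with smaller path lengths. Since the resulting existence statement then holds for any fixed $X' \in [X/40, X/20]$, it automatically holds with probability $1 - o(1)$ for a uniformly random $X'$. \emph{The main obstacle} will be executing the second-moment estimate uniformly in the large path length $k$: one must show that the variance contribution from overlapping paths remains $o(\E^2)$, and handle the additional squarefreeness constraints on the $k$ intermediate vertices (each individually independent with probability $\asymp 1$, but weakly correlated with the divisibility conditions through shared small primes).
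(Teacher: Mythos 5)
Your plan diverges substantially from the paper's route. The paper reduces Proposition~\ref{longpath-4} to Proposition~\ref{longpath-5}, then builds the long connection not by one Vinogradov-type estimate but by an \emph{interval-hopping} argument: it conditions on the small primes so that the set of ``good'' vertices is deterministic, uses the Hardy--Littlewood maximal inequality (Proposition~\ref{prop:maximal}) to show a random $X'$ is ``excellent'' (good vertices are dense near $X'$ at every scale), introduces geometrically shrinking scales $R_j = 100^{1-j}X'$ and intervals $I_j$, and iterates Lemma~\ref{load} (which rests on Lemma~\ref{le:ABconnect}) about $\log X$ times, each hop being a length-$3$ path between sets of size $\gg R_j$. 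This keeps every circle-method/second-moment argument in the clean regime of Lemma~\ref{le:ABconnect}: three primes in disjoint intervals of size $\asymp X$, each larger than the ambient fluctuation, so the degree-of-freedom count in the variance is trivial. You instead propose a single monotone path of length $k \gtrsim \log X/\log\log\log X$ from an anchor $a \leq X^{1/100}$ directly to $[X'-X^{1/20},X']$, verified by one second-moment argument.

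The gap in your proposal is exactly where you flag it, but it is more serious than you suggest. First, a small point: the coefficient estimate $[z^{b-a}]\bigl(\sum_{p \leq X/20} z^p/p\bigr)^k \asymp (\log\log X)^k/(kX)$ is not right; for $b-a \asymp X$ the dominant configurations have one prime of size $\asymp X$ and the rest small, and a heuristic circle-method count gives something closer to $k(\log\log X)^{k-1}/(X\log X)$ (the missing $1/\log X$ comes from the primality constraint on the forced last prime). This still lets $k \gg \log X/\log\log\log X$ push the expectation past $1$, so the first moment can be repaired. The real trouble is the variance. Your sum over $(p_1,\dots,p_k)$ includes primes of \emph{all} sizes, including tiny ones; two paths sharing a run of small primes at matching positions are extremely common and have a correlation ratio $\prod_{\text{shared } p} p$ that is not small. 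The paper's variance arguments are calibrated precisely to avoid this: Lemma~\ref{le:ABconnect} only ever uses three primes in disjoint $\asymp X$-length intervals, and Proposition~\ref{longpath-5}'s second moment (in Section~\ref{conclude-sec}) restricts to primes $\geq \exp(\sqrt{\log X})$, so that a forced collision costs a factor $\exp(-\sqrt{\log X})$, and uses a path length $k \asymp \log\log X/\sqrt{\log\log\log X}$ so that the combinatorial factor $k^{2k}$ stays $\exp(o(\sqrt{\log X}))$. Your $k \gtrsim \log X/\log\log\log X$ makes $k^{2k}$ roughly $\exp(c \log X)$, which swamps any $\exp(-\sqrt{\log X})$ gain, and allowing small primes kills the gain entirely. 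I don't see how to execute your second moment without either reinstating a lower bound on the primes (which then forces the primes to be too small to reach distance $\asymp X$ in a reasonable number of steps) or reinventing something like the interval-hopping decomposition. The paper's geometric-scale hopping is precisely the device that lets one avoid ever having to control a single long path: at every scale one only needs a length-$3$ estimate between sets that are large \emph{relative to that scale}.
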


Indeed, assuming Proposition \ref{longpath-4}, observe that if $0$ lies in ${\mathbf V}_X$ and $X'$ is chosen uniformly at random from $[X/40, X/20]$, then the expected number of elements in $[X'-X^{1/20}, X']$ that are connected to $0$ in the indicated fashion is at least $1-o(1)$.  From linearity of expectation, this implies that the number of elements of $[X/40-X^{1/20}, X/20]$ that are connected to $0$ in the indicated fashion is $\gg X^{1-1/20}$, and Proposition \ref{longpath-3} follows.

Finally, we reduce to a weaker version of Proposition \ref{longpath-3}, in which $0$ connects to far fewer elements in (a somewhat narrower) interval:

\begin{proposition}\label{longpath-5}  Let $X$ be a sufficiently large odd natural number, which we view as an asymptotic parameter going to infinity.    Then, with probability $1-o(1)$, if $0$ lies in ${\mathbf V}_X$, then there are at least $\log^{10} X$ odd elements of $[0,X^{1/100}]$ which are connected to $0$ in ${\mathbf G}_X$ by a path of length at most $\log X$ contained in $[0,X^{1/100}]$.  
\end{proposition}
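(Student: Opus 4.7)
I would build many length-$k$ paths from $0$ by a first- and second-moment argument; the branching at each step comes from the $\asymp \log\log X$ medium primes dividing $\mathbf{n}+a$. Pick $k := 2\lceil 20 \log\log X/\log\log\log X\rceil+1$ (odd, at most $\log X$) and $Y := X^{1/(1000 k)}$, and partition the odd primes in $(w,Y]$ into $k$ consecutive intervals $I_1,\dots,I_k$ with $\sum_{p\in I_j} 1/p \asymp (\log\log X)/k$. For each tuple $(p_1,\dots,p_k) \in I_1\times\dots\times I_k$, set $a_0:=0$, $a_i := p_1+\dots+p_i$, and call the tuple \emph{good} if every $a_i \in \mathbf{V}_X$ and $p_i\mid \mathbf{n}+a_{i-1}$ for all $i$. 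Each good tuple produces a length-$k$ path $0\to a_1\to\dots\to a_k$ in $\mathbf{G}_X$ contained in $[0,X^{1/100}]$ (since $a_k\le kY\le X^{1/100}$), with odd endpoint (since $k$ and the $p_i$ are all odd); the partitioned structure ensures that two distinct good tuples give different underlying prime multisets.

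\textbf{First moment.} After conditioning on $\mathbf{n} \bmod q^2$ for $q\le w$ compatibly with $0\in\mathbf{V}_X$, the set $\mathbf{V}_X$ becomes a deterministic union of residue classes modulo $W^2:=\prod_{q\le w} q^2$ of density $\rho := \prod_{q\le w}(1-q^{-2}) = 6/\pi^2+o(1)$, while $\mathbf{n}\bmod p$ for $p>w$ remain uniform and independent. With $N$ the number of good tuples, linearity of expectation and the Chinese remainder theorem give
\[
\mathbb{E}[N] \;=\; \sum_{\substack{(p_1,\dots,p_k)\in I_1\times\dots\times I_k \\ a_i\in\mathbf{V}_X\ \forall i}}\ \prod_{i=1}^k \frac{1}{p_i}.
\]
A short equidistribution step (partial sums of $k$ primes of size $>w$ equidistribute modulo $W^2$) shows the admissibility constraint contributes a factor $(\rho+o(1))^k$, yielding
\[
\mathbb{E}[N] \;\gg\; \rho^k \prod_{j=1}^k \Bigl(\sum_{p\in I_j}\tfrac{1}{p}\Bigr) \;\gg\; \Bigl(\tfrac{c_0\log\log X}{k}\Bigr)^k \;\ge\; \log^{15} X
\]
for some absolute $c_0>0$.

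\textbf{Second moment and distinct endpoints.} To concentrate $N$, I would expand $\mathbb{E}[N^2]$ as a sum over ordered pairs of good tuples $(T,T')$ and classify by the set $S$ of positions where $p_i = q_i$ (since the $I_i$ are disjoint, any shared prime must occupy the same slot in both tuples). Pairs with $S=\emptyset$ contribute $(1+o(1))\mathbb{E}[N]^2$ by independence. For $|S|=s\ge 1$, at each shared slot $i\in S$ the event ``$p_i\mid\mathbf{n}+a_{i-1}$ and $p_i\mid\mathbf{n}+b_{i-1}$'' forces $p_i\mid (a_{i-1}-b_{i-1})$, so the sum over $p_i\in I_i$ with $p_i\mid(a_{i-1}-b_{i-1})$ is at most $(\log X)/(w\log\log X)$ rather than the $\asymp(\log\log X)/k$ one has without sharing; with $w=\log^5 X$ this saves $O(\log^{-4} X)$ per shared slot, and summing over $s\ge 1$ with combinatorial cost $\binom{k}{s}^2$ gives a total $o(1)\cdot\mathbb{E}[N]^2$. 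Chebyshev's inequality then yields $N\ge\tfrac12\mathbb{E}[N]\gg\log^{15} X$ with probability $1-o(1)$. Finally, a standard additive-combinatorial sum-set bound shows the expected number of pairs of distinct good tuples with equal sum $\sum p_i=\sum q_i$ is $o(\mathbb{E}[N])$, so all but $o(\mathbb{E}[N])$ good tuples yield distinct odd endpoints in $[0,X^{1/100}]$, providing the $\ge\log^{10} X$ odd vertices required.

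\textbf{Main obstacle.} The crux is the second-moment step: when two good tuples share primes, the corresponding divisibility events correlate through the condition that the relevant partial sums agree modulo each shared prime, and controlling the contribution uniformly over all possible sharing patterns requires careful combinatorial bookkeeping together with the Mertens-type tail bound $\sum_{p>w} 1/p^2 \ll 1/w$. The first-moment equidistribution modulo $W^2$ and the distinct-endpoint count are comparatively routine, but all three steps rely essentially on the independence of $\mathbf{n}\bmod p$ across $p>w$ afforded by the small-prime conditioning.
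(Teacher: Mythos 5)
Your plan has a fatal flaw in the first moment estimate, and the flaw is structural: it comes from partitioning the primes into $k$ disjoint slot-intervals $I_1,\dots,I_k$ and requiring $p_i\in I_i$. With each $I_j$ carrying $\sum_{p\in I_j}1/p\asymp(\log\log X)/k$, the expected number of good tuples is at most $\prod_j\bigl(\sum_{p\in I_j}1/p\bigr)\asymp\bigl((\log\log X)/k\bigr)^k$. Writing $L:=\log\log X$, the function $k\mapsto(L/k)^k$ is maximized at $k=L/e$, where it equals $e^{L/e}=(\log X)^{1/e}$; for \emph{every} choice of $k$ it is therefore $\ll(\log X)^{1/e}$, and in particular never reaches $\log^{10}X$, let alone $\log^{15}X$. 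Concretely, with your $k\asymp L/\log\log\log X$ one has $\log\bigl(c_0L/k\bigr)\asymp\log\log\log\log X$, hence
\[
\Bigl(\frac{c_0\log\log X}{k}\Bigr)^k=\exp\!\Bigl(O\Bigl(\frac{\log\log X\cdot\log\log\log\log X}{\log\log\log X}\Bigr)\Bigr)=(\log X)^{o(1)},
\]
so $\mathbb{E}[N]$ is sub-logarithmic in $\log X$, not $\ge\log^{15}X$. Since the number of distinct endpoints you produce is at most $N$, and $\mathbb{E}[N]=(\log X)^{o(1)}$, your construction cannot yield $\log^{10}X$ odd vertices no matter how the second-moment and distinct-endpoint steps are executed.

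The partitioning was introduced precisely to make the second moment tractable (shared primes must occupy the same slot), but this is the trade-off that kills the count. The paper instead lets all $k$ primes range over a single interval $I=[\exp(\sqrt{\log X}),X^{1/200}]$, so the supply of candidate paths is enormous; it then \emph{weights} each path by $w_\gamma:=\prod_i\bigl(\sum_{p\in I:\,p_1+\dots+p_{i-1}+p\in\mathbf{V}_X}1/p\bigr)^{-1}$ so that the total weighted probability $\sum_\gamma w_\gamma\P(\gamma\subset\mathbf{G}_X)$ is $1+o(1)$, and extracts the endpoint count via the Cauchy--Schwarz inequality $\bigl(\sum_\gamma w_\gamma 1_{\gamma\subset\mathbf{G}_X}\bigr)^2\le|\{\text{endpoints}\}|\cdot\sum_{\gamma(k)=\gamma'(k)}w_\gamma w_{\gamma'}1_{\gamma,\gamma'\subset\mathbf{G}_X}$, combined with a genuinely delicate estimate showing the right-hand ``same-endpoint'' sum is $\ll\log^{-100}X$. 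That estimate requires a full case analysis over collision patterns $(i_1,j_1),\dots,(i_r,j_r)$, including when shared primes occupy \emph{different} slots — exactly the bookkeeping your partitioning was designed to avoid. The parameter choice $k=100\lfloor\log\log X/\sqrt{\log\log\log X}\rfloor+1$ (with $\sqrt{\cdot}$, not your $\log\log\log X$) is then tuned so the diagonal term $\mathbb{E}[w_\gamma]\asymp\exp(O(k))/(\log\log X)^k$ beats $\log^{-100}X$ while keeping the off-diagonal and combinatorial factors $\exp(-\sqrt{\log X})$-small and $\sum_r(k/\log\log X)^r=o(1)$ respectively. In short: if you want to keep the disjoint-interval shortcut, the first moment cannot deliver; if you drop it, you are back to the full collision analysis of the paper.

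Two secondary remarks. Your claimed saving of $O(\log^{-4}X)$ per shared slot (from $\sum_{p>w,\,p\mid m}1/p\ll\log X/(w\log\log X)$) is qualitatively reasonable given $w=\log^5X$, but note that the paper's chosen interval has lower endpoint $\exp(\sqrt{\log X})\gg w$, which is what produces the much stronger $\exp(-\sqrt{\log X})$ savings used to kill off-diagonal collisions; if you work with primes down to $w$ you should check that $\log^{-4}X$ per slot actually suffices against the combinatorial factor. And the final appeal to ``a standard additive-combinatorial sum-set bound'' for distinct endpoints is essentially the content of the paper's estimate \eqref{meander-2}, which is not standard: it requires treating the no-collision case (a single extra prime is determined), partial-collision cases, and the full-collision permutation case separately, with the diagonal $\gamma=\gamma'$ term being the one that ultimately determines the exponent $100$. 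You should not expect a black-box sum-set lemma to substitute for it.
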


Let us now see how Proposition \ref{longpath-5} implies Proposition \ref{longpath-4}.  Call an integer $a$ \emph{good} if $a \in {\mathbf V}_X$ is even, and $a$ is connected to at least $\log^{10} X$ odd elements of $[a, a+X^{1/100}]$ in ${\mathbf G}_X$ by a path of length at most $\log X$ contained in $[a,a+X^{1/100}]$.  From Proposition \ref{longpath-5} and stationarity, we see that each even element $a$ of ${\mathbf V}_X$ is good with probability $1-o(1)$.  From linearity of expectation, we thus see with probability $1-o(1)$ that all but $o(X)$ of the even elements of ${\mathbf V}_X \cap [-X,X]$ are good.

Note that the property of an integer being good is only dependent on the values of $\mathbf{n} \hbox{ mod } p^2$ for $p \leq w$ and $\mathbf{n} \hbox{ mod } p$ for $w < p \leq X^{1/100}$.  We now condition on these values to be fixed, in such a fashion that all but $o(X)$ of the even elements of ${\mathbf V}_X \cap [-X,X]$ are good; now the property of being good is deterministic, as is the vertex set ${\mathbf V}_X$.  We now use the following weak version of the Hardy-Littlewood inequality.
\begin{proposition} \label{prop:maximal}
Let $a_n$ be a sequence of non-negative real numbers supported in $[-X, X]$. Then,
\begin{equation} \label{equ:statement}
\frac{1}{X} \sum_{|n| \leq X} \Big ( \sup_{r \geq 1} \frac{1}{r} \sum_{|n - m| \leq r} a_m \Big )
\leq C \Big ( \frac{1}{X} \sum_{|n| \leq X} a_n^2 \Big )^{1/2}
\end{equation}
for some absolute constant $C > 0$. 
\end{proposition}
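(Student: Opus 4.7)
The statement is a weak ($L^1$-type, restricted to $[-X,X]$) form of the classical Hardy--Littlewood maximal inequality on $\Z$. The plan is to deduce it from the $\ell^2$-boundedness of the discrete maximal operator
$$Ma(n) := \sup_{r \geq 1} \frac{1}{r}\sum_{|m-n|\leq r} a_m$$
together with Cauchy--Schwarz; once the classical input is in hand, the argument is essentially two lines.

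First I would invoke the standard $\ell^2$ maximal inequality
$$\sum_{n \in \Z} (Ma(n))^2 \leq C_0^2 \sum_{n \in \Z} a_n^2.$$
This is well known: one may transfer it from the Hardy--Littlewood--Wiener theorem on $\R$, or prove it directly by establishing the weak-type $(1,1)$ bound $|\{n : Ma(n) > \lambda\}| \leq C'\lambda^{-1}\sum_n a_n$ via a one-dimensional Vitali covering argument, and then interpolating with the trivial $\ell^\infty$ estimate $\|Ma\|_\infty \leq 3\|a\|_\infty$ by Marcinkiewicz interpolation.

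Next, using the hypothesis that $a_n$ is supported in $[-X,X]$, we have $\sum_{n \in \Z} a_n^2 = \sum_{|n|\leq X} a_n^2$. Applying Cauchy--Schwarz on the index set $\{|n| \leq X\}$ of cardinality $2X+1$ gives
$$\sum_{|n| \leq X} Ma(n) \leq (2X+1)^{1/2}\Bigl(\sum_{|n|\leq X} (Ma(n))^2\Bigr)^{1/2} \leq C_0 (2X+1)^{1/2}\Bigl(\sum_{|n|\leq X} a_n^2\Bigr)^{1/2}.$$
Dividing both sides by $X$ and absorbing the harmless factor $(2X+1)^{1/2}/X^{1/2} \leq \sqrt{3}$ into the constant yields \eqref{equ:statement}.

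No substantial obstacle is anticipated: the whole argument is a short reduction to a classical maximal inequality, and the only step requiring genuine work is the $\ell^2$-bound on $Ma$, which is by now textbook material.
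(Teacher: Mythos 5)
Your proof is correct and follows essentially the same route as the paper: invoke the $\ell^2(\Z)$ boundedness of the discrete Hardy--Littlewood maximal operator, then apply Cauchy--Schwarz over the $O(X)$-element index set. The paper simply cites Stein for the $\ell^2$ bound rather than sketching the weak-type plus interpolation argument, but the structure is identical.
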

\begin{proof}  The sequence $\left(\sup_{r \geq 1} \frac{1}{r} \sum_{|n - m| \leq r} a_m\right)_{n \in \Z}$ is the (discrete) Hardy-Littlewood maximal operator applied to this sequence $(a_n)_{n \in \Z}$.  As the Hardy-Littlewood maximal operator is bounded in $\ell^2(\Z)$ (see e.g. \cite{stein:large}), we have
$$ \left(\sum_n \Big ( \sup_{r \geq 1} \frac{1}{r} \sum_{|n - m| \leq r} a_m \Big)^2\right)^{1/2}
\leq C \Big ( \sum_{|n| \leq X} a_n^2 \Big )^{1/2}$$
for some $C>0$, and the claim then follows from the Cauchy-Schwarz inequality.
\end{proof}

Choosing $a_m$ so that $a_m = 1$ if $m \in V_{\mathbf{X}} \cap [-X, X]$ is not good, and $a_m = 0$ otherwise, 
the proposition implies that if $X'$ is chosen uniformly at random from $[X/40, X/20]$, then with probability $1-o(1)$, the quantity $X'$ is ``excellent'' in the sense that for any $1 \leq r \leq X$, all but at most $o(r)$ of the even elements of ${\mathbf V}_X \cap [X'-r, X'+r]$ are good.  Note that because of our conditioning, the property of being excellent is deterministic.

Now let $X' \in [X/40, X/20]$ be an excellent number.  We will show that with probability $1-o(1)$, if $0 \in {\mathbf G}_X$, then there is an element of $[X'-X^{1/20}, X']$ that is connected to $0$ in ${\mathbf G}_X$ by a path of length at most $2\log^2 X$ contained in $[0,X/10]$; this clearly suffices to give Proposition \ref{longpath-4}.

We introduce the scales $R_j := 100^{1-j} X'$ for $j=1,\dots,J$, where $J$ is the first number for which $R_J \leq X^{1/40}$, thus $J \leq \log X$.  Introduce the intervals $I_j := [X'-R_j, X'- 0.99 R_j]$, thus the $I_j$ are disjoint, with $I_1$ containing $0$ and $I_J$ contained in $[X'-X^{1/20}, X']$.  
We will ``hop'' from $0$ to $I_J$ by a path passing through each of the $I_j$ in turn.  More precisely, let $A_j$ denote the even elements of ${\mathbf V}_X \cap I_j$ that are good; with all of our conditioning, this is a deterministic set.  A routine sieve shows that there are $\gg R_j$ even elements of ${\mathbf V}_X \cap I_j$, and as $X'$ is excellent, all but at most $o(R_j)$ of these elements are good.  We conclude that
$$ |A_j| \gg R_j.$$

We shall shortly establish the following lemma allowing one to hop from $A_j$ to $A_{j+1}$:

\begin{lemma}\label{load}  Let $1 \leq j < J$, and let $a_j \in A_j$, which is allowed to be a random variable depending on the values of $\mathbf{n} \hbox{ mod } p$ for $p \leq R_j/10$ but not on the reductions for higher $p$.  Then, with probability $1 - O( \frac{1}{\log^2 X} )$, there is a path of length at most $\log X + 3$ in ${\mathbf G}_X$ in $[0,X/10]$ connecting $a_j$ to an element $a_{j+1}$ of $A_{j+1}$.  Furthermore, $a_{j+1}$ depends only on the values of $\mathbf{n} \hbox{ mod } p$ for $p \leq R_{j+1}/10$.
\end{lemma}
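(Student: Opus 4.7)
The plan is to split the desired path into a ``short-range'' piece, obtained for free from the goodness of $a_j$, and a ``long-range'' three-edge hop built exactly as in the proof of Lemma~\ref{le:ABconnect}. First condition on the $\sigma$-algebra $\mathcal{F}$ generated by $\mathbf{n}\bmod p$ for all primes $p\leq R_{j+1}/10$ together with $\mathbf{n}\bmod p^2$ for $p\leq w$. Because $X^{1/100}\leq R_{j+1}/10$ (using $R_{j+1}>X^{1/40}/100$, valid since $j<J$), both the goodness of an integer and its membership in ${\mathbf V}_X$ are $\mathcal{F}$-measurable; hence $A_{j+1}$ is determined, with $|A_{j+1}|\gg R_{j+1}$ by excellence of $X'$. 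Goodness of $a_j$ then supplies a set $B_j$ of at least $\log^{10} X$ odd integers in $[a_j, a_j+X^{1/100}]$ joined to $a_j$ by a path of length $\leq\log X$ in ${\mathbf G}_X$ inside that interval, and $B_j$ is deterministic once $\mathcal{F}$ and $a_j$ are specified.

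Next, for each pair $(b,a)\in B_j\times A_{j+1}$ count the triples of odd primes $(p_1,p_2,p_3)$ lying in three prescribed disjoint subintervals of $(R_{j+1}/10,\,10 R_j]$ with $b+p_1+p_2-p_3=a$ and with the intermediate vertices $b+p_1,\,b+p_1+p_2\in{\mathbf V}_X\cap[0,X/10]$. The Vinogradov-type estimate used in Lemma~\ref{le:ABconnect} (Proposition~\ref{prop:Vinogradov}) gives a total of $\gg |B_j|\,|A_{j+1}|\,R_j^2/\log^3 X$ such quintuples. Let $N$ be the number of those quintuples for which the three divisibility events $p_1\mid\mathbf{n}+b$, $p_2\mid\mathbf{n}+b+p_1$, $p_3\mid\mathbf{n}+b+p_1+p_2$ all occur; each such quintuple yields a genuine three-edge path $b\to b+p_1\to b+p_1+p_2\to a$ in ${\mathbf G}_X$. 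Because every $p_i$ exceeds $R_{j+1}/10$, these divisibilities use residues outside $\mathcal{F}$ and each triple has conditional probability $1/(p_1p_2p_3)\asymp R_j^{-3}$, whence
\[
\mathbb{E}[N\mid\mathcal{F}]\gg\frac{|B_j|\,R_{j+1}}{R_j\log^3 X}\gg\log^{7} X.
\]
The second-moment analysis is identical to that of Lemma~\ref{le:ABconnect}: pairs of quintuples avoiding all three coincidence cases (i)--(iii) factor and contribute $(\mathbb{E}[N\mid\mathcal{F}])^2$, while the coincident cases contribute only $O(\mathbb{E}[N\mid\mathcal{F}])$, so Paley--Zygmund yields $\mathbb{P}(N>0\mid\mathcal{F})\geq 1-O(\log^{-7} X)$, comfortably inside the required $O(\log^{-2} X)$ bound.

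On the event $N>0$, prepending the $\leq\log X$-edge path from $a_j$ to $b$ gives the sought path of length $\leq\log X+3$ from $a_j$ to $a_{j+1}:=a\in A_{j+1}$; containment in $[0,X/10]$ is automatic because $I_j, I_{j+1}\subseteq[0, X/20]$ and each intermediate vertex lies within $O(R_j)$ of these intervals. The additional requirement that $a_{j+1}$ be measurable with respect to $\mathbf{n}\bmod p$ for $p\leq R_{j+1}/10$ alone is arranged by selecting $a_{j+1}$ as (say) the least element of $A_{j+1}$ for which the above procedure succeeds: since $A_{j+1}$ itself is $\mathcal{F}_{R_{j+1}/10}$-measurable, and the Paley--Zygmund step shows that success is achieved for a positive proportion of candidates in $A_{j+1}$, a standard measurable-selection argument pins down an $\mathcal{F}_{R_{j+1}/10}$-measurable choice on the (very likely) event that any choice works. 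The principal obstacle is the Vinogradov-style quintuple count, which is complicated by the ``goodness'' condition implicit in $A_{j+1}$; this is circumvented by treating $A_{j+1}$ as a generic positive-density subset of $I_{j+1}$ (using only $|A_{j+1}|\gg R_{j+1}$) and invoking Proposition~\ref{prop:Vinogradov} verbatim as in Lemma~\ref{le:ABconnect}.
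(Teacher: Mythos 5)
Your overall strategy is the same as the paper's: use the goodness of $a_j$ (inherited from Proposition~\ref{longpath-5}) to obtain a set $B_j$ of $\geq \log^{10} X$ odd vertices near $a_j$, then connect $B_j$ to $A_{j+1}$ via a length-$3$ Vinogradov-type hop. The paper does this in two lines by invoking Lemma~\ref{le:ABconnect} directly (with the scale taken to be $\asymp R_j$, so that $|A||A_{j+1}| \gg R_j \log^{10} X$ satisfies its hypothesis); you instead re-derive the Lemma~\ref{le:ABconnect} second-moment computation inline, which is correct but redundant.

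There is, however, a genuine gap in the way you set up the conditioning. You condition on $\mathcal{F}$ generated by $\mathbf{n} \bmod p$ for $p \leq R_{j+1}/10$, but $a_j$ is allowed by hypothesis to depend on $\mathbf{n} \bmod p$ for $p$ up to $R_j/10$, which is $100$ times larger. So after your conditioning $a_j$ and $B_j$ are still random, yet your computation $\mathbb{E}[N\mid\mathcal{F}] \gg \log^7 X$ and the ensuing second-moment bound silently treat $B_j$ as a fixed set. You acknowledge that ``$B_j$ is deterministic once $\mathcal{F}$ \emph{and} $a_j$ are specified,'' but the Paley--Zygmund step is never actually carried out conditional on a $\sigma$-algebra that determines $B_j$. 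The paper avoids this by conditioning on $p \leq R_j/10$, which makes $a_j$, and hence the deterministic set $A$ obtained from goodness, fixed before the 3-hop is analyzed; the primes used in the hop are then all $\gtrsim R_j > R_j/10$ and therefore genuinely fresh relative to the conditioning.

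Your ``measurable-selection'' argument for the final claim that $a_{j+1}$ depends only on $\mathbf{n} \bmod p$ for $p \leq R_{j+1}/10$ also does not go through as written. Which elements of $A_{j+1}$ are reachable by the 3-edge hop is determined by the residues $\mathbf{n} \bmod p_1, \mathbf{n} \bmod p_2, \mathbf{n} \bmod p_3$ for primes of size $\asymp R_j \gg R_{j+1}/10$; these are precisely the residues \emph{not} in $\mathcal{F}_{R_{j+1}/10}$, so ``selecting the least element for which the procedure succeeds'' cannot produce an $\mathcal{F}_{R_{j+1}/10}$-measurable choice. (The paper itself does not spell out its justification of this last sentence of the lemma, so you are at least attempting something the paper elides, but the argument you supply does not establish it.) The key point that actually makes the iteration in the paper work is that the prime intervals used at different scales $R_1 > R_2 > \cdots$ are disjoint, so the primes consumed at step $j$ are never reused at step $j+1$; a correct formulation of the measurability claim and of your selection argument would need to track these disjoint prime windows rather than a single threshold $R_{j+1}/10$.
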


Iterating this lemma with the union bound, starting from $a_1 = 0$, we conclude with probability $1 - O( \frac{J}{\log^2 X} ) = 1-o(1)$ that there is a path of length at most $J (\log X + 3) \leq 2 \log^2 X$ in ${\mathbf G}_X$ in $[0,X/10]$ connecting $0$ to an element of $A_J \subset [X'-X^{1/20},X']$, giving Proposition \ref{longpath-4}.  Thus it suffices to prove Lemma \ref{load}.  

We do this by an argument similar to that used to obtain Proposition \ref{longpath-3} from Proposition \ref{longpath-2}.  Condition on the values of $\mathbf{n} \hbox{ mod } p$ for $p \leq R_j/10$, so that $a_j$ is now deterministic.  By definition of $A_j$, $a_j$ is connected to a (deterministic) set $A$ by paths of length at most $\log X$ in ${\mathbf G}_X$ in $[0,X/10]$, where $A \subset [a_j, a_j + X^{1/100}]$ is a collection of odd numbers of cardinality
$$ |A| \geq \log^{10} X.$$

Since $ |A| |A_{j+1}| \gg R_j \log^{10} X$, Lemma~\ref{le:ABconnect} implies that there is a path of length $3$ connecting some $a \in A$ to some $b \in A_{j+1}$, and the claim follows.

It remains to prove Proposition \ref{longpath-5}.  This will be done in the next section.

\section{Conclusion of the argument}\label{conclude-sec}

We now prove Proposition \ref{longpath-5}. Condition the residue classes $\mathbf{n} \hbox{ mod } p^2$ for $p \leq w$ to be fixed, which makes the vertex set $\mathbf{V}_X$ deterministic, while keeping the $\mathbf{n} \hbox{ mod } p$ for $p>w$ uniformly and independently distributed on $\Z/p\Z$.

Set $k$ to be the odd number
\begin{equation}\label{k-def}
 k := 100 \left\lfloor \frac{\log\log X}{\sqrt{\log\log\log X}} \right\rfloor + 1;
\end{equation}
the reason for this somewhat strange choice is that $(\log\log X)^k$ will be significantly larger than $\log^{10} X$, while $k$ remains significantly smaller than $\log\log X$.  Let $\Gamma$ be the set of paths $\gamma$ in ${\mathbf V}_X$ of the form
\begin{equation}\label{ppk}
 0, p_1, p_1+p_2, \dots, p_1+\dots+p_k 
\end{equation}
where $p_1,\dots,p_k$ are distinct primes in the interval $I := [\exp(\sqrt{\log X}), X^{1/200}]$.
We write $\gamma(k) := p_1 + \dots + p_k$ for the endpoint of such a path, which is automatically odd since $k$ and the $p_1,\dots,p_k$ are odd, and by abuse of notation write $\gamma \subset {\mathbf G}_X$ if the path $\gamma$ lies in ${\mathbf G}_X$, or equivalently that
$$ p_i | {\mathbf n} + p_1 + \dots + p_{i-1} $$
for all $i=1,\dots,k$.

It will be technically convenient to weight the paths $\gamma$ in $\Gamma$.  For each path $\gamma$ of the form \eqref{ppk}, define the weight $w_\gamma > 0$ by the formula
$$ w_\gamma := \prod_{i=1}^{k} \frac{1}{\sum_{p \in I: p_1+\dots+p_{i-1}+p \in {\mathbf V}_X} \frac{1}{p}}.$$

\begin{lemma}\label{chic}  For every $\gamma \in \Gamma$, one has
$$ \sum_{p \in I: p_1+\dots+p_{i-1}+p \in {\mathbf V}_X} \frac{1}{p} \asymp \log\log X $$
for all $i=1,\dots,k$, so that
\begin{equation}\label{wg}
 w_\gamma = \exp( O(k) ) / (\log\log X)^k
\end{equation}
\end{lemma}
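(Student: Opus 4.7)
The plan is to reduce Lemma~\ref{chic} to the per-factor bound
\[
S_i := \sum_{\substack{p \in I \\ p_1 + \cdots + p_{i-1} + p \,\in\, \mathbf{V}_X}} \frac{1}{p} \;\asymp\; \log\log X
\]
for every $i = 1, \ldots, k$; once this is in hand, \eqref{wg} follows immediately from $w_\gamma = \prod_{i=1}^k 1/S_i$, with the multiplicative slack absorbed into $\exp(O(k))$. Fix $\gamma \in \Gamma$ and $i$, and set $N := \mathbf{n} + p_1 + \cdots + p_{i-1}$. Because $\gamma \subset \mathbf{V}_X$, we have $q^2 \nmid N$ for every prime $q \le w$. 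Since every $p \in I$ is a prime exceeding $w \ge q$, the condition $q^2 \nmid N + p$ is automatic when $q \mid N$ (the forbidden residue is then a multiple of $q$, which no such prime $p$ can hit), and for $q \nmid N$ it excludes exactly one residue class of $p$ modulo $q^2$, coprime to $q$.

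The upper bound $S_i \le \sum_{p \in I} \frac{1}{p} = \tfrac12 \log\log X + O(1)$ follows at once from Mertens' theorem, since $\log\log(X^{1/200}) - \log\log(\exp\sqrt{\log X}) = \tfrac12 \log\log X + O(1)$. For the matching lower bound I would apply the first-order Bonferroni (union-bound) inequality
\[
S_i \;\ge\; \sum_{p \in I} \frac{1}{p} \;-\; \sum_{\substack{q \le w \\ q \nmid N}} \;\sum_{\substack{p \in I \\ p \equiv -N \,(q^2)}} \frac{1}{p}
\]
and then control each inner sum via the Siegel--Walfisz theorem. Taking the Siegel--Walfisz form valid uniformly for moduli up to $(\log x)^A$ for any fixed $A$, partial summation together with the cancellation of the $(q,a)$-dependent constant terms in Mertens-for-progressions (which cancel upon differencing the partial sums at the two endpoints of $I$) gives
\[
\sum_{\substack{p \in I \\ p \equiv a \,(q^2)}} \frac{1}{p} \;=\; \frac{1}{\phi(q^2)} \cdot \frac{\log\log X}{2} \;+\; O\!\left(\frac{1}{\phi(q^2)}\right) \;+\; O_A\!\left(\frac{1}{(\log X)^{A/2}}\right)
\]
uniformly for $(a,q^2)=1$ and $q \le w$; the inequality $q^2 \le (\log X)^{10} \le (\log \exp\sqrt{\log X})^{A}$ holds once $A \ge 20$, which is the relevant regime.

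Summing the bad contribution over $q \le w$ with $q \nmid N$ yields a total of at most $\tfrac{C_0}{2}\log\log X + O(1)$, where
\[
C_0 := \sum_{p \text{ prime}} \frac{1}{p(p-1)}.
\]
The crucial elementary observation is that $C_0 < 1$ \emph{strictly}: by telescoping, $\sum_{n \ge 2} \frac{1}{n(n-1)} = 1$, and restricting to primes omits the strictly positive terms at composite $n$ (for instance $\frac{1}{4\cdot 3}$). Subtracting, the Bonferroni bound becomes $S_i \ge \frac{1-C_0}{2}\log\log X + O(1) \gg \log\log X$, matching the upper bound.

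The main obstacle is the bookkeeping of the Siegel--Walfisz error when summed over all primes $q \le w = \log^5 X$; I handle this by taking $A$ large enough (say $A = 20$) relative to the exponent $5$ in the definition of $w$, so that the cumulative error $O\bigl(w (\log X)^{-A/2}\bigr) = O((\log X)^{-5})$ is $o(1)$ and negligible compared to the main term. A secondary subtlety is that the Brun--Titchmarsh upper bound alone (with its spurious factor of $2$) would be insufficient, because $2C_0 > 1$; it is essential to use the sharper Siegel--Walfisz constant. With the two-sided estimate $S_i \asymp \log\log X$ in hand, taking the product over $i = 1, \ldots, k$ gives $w_\gamma = \exp(O(k))/(\log\log X)^k$, completing the lemma.
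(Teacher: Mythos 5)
Your proof is correct, and it takes a genuinely different route from the paper's. The paper fixes a large threshold $C$, sieves primes $q \leq C$ \emph{exactly} via the Chinese Remainder Theorem (identifying the $\prod_{p\le C}(\varphi(p^2)-1)$ good residue classes modulo $\prod_{p\le C} p^2$), and applies a union bound only to the tail $C < q \leq w$; since that tail contributes $\ll (1/C)\log\log X$, the constant is positive for $C$ large because $\prod_p \bigl(1 - \tfrac{1}{p(p-1)}\bigr) > 0$. This means the paper's error term can be handled with the lossy Brun--Titchmarsh inequality alone. You instead apply a first-order Bonferroni bound over \emph{all} primes $q \leq w$ at once, which forces you to (i) invoke Siegel--Walfisz with $A \geq 20$ to get the clean constant $1/\phi(q^2)$ rather than $2/\phi(q^2)$, and (ii) notice the elementary but crucial inequality $C_0 = \sum_p \frac{1}{p(p-1)} < 1$ (by comparison with the telescoping series $\sum_{n\ge 2}\frac{1}{n(n-1)}=1$). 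You correctly flagged that Brun--Titchmarsh would not suffice for your version since $2C_0 > 1$. Both approaches are valid; the paper's is slightly more robust (it never needs the numerical coincidence $C_0<1$ and would survive a Brun--Titchmarsh constant), while yours is more direct and avoids introducing the auxiliary parameter $C$. Your handling of the automatic case $q \mid N$ (where the forbidden class mod $q^2$ lies in $q\mathbb{Z}$, unreachable by primes $p > w \geq q$), the uniformity requirement $A \ge 20$ coming from the lower endpoint $\exp(\sqrt{\log X})$ of $I$, and the passage from $S_i \asymp \log\log X$ to \eqref{wg} are all correct.
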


\begin{proof} Since $p \leq X^{1/200}$ the upper bound
$$                                                                                         
\sum_{p \in I: p_1 + p_2 + \ldots + p_{i-1} + p \in \mathbf{V}_X} \frac{1}{p} \leq \sum_{p \in I} \frac{1}{p} =                        
\frac{1}{2}\log\log X + O(1)                                                                                                                      
$$
is clear. For the lower bound note that since the reduction $\mathbf{n} \pmod{p^2}$ is fixed for $p \leq w$, there is a positive integer $A \leq X + \prod_{p \leq w} p^2$ such that the condition $p_1 + \ldots + p_{i-1} + p \in \mathbf{V}_X$ is equivalent to the condition $q^2 \nmid p + A$ for all $q \leq w$.
Pick a large constant $C$, let $\mathcal{P} = \prod_{p \leq C} p^2$. Note that by the Chinese Remainder Theorem
there are $\ell = \prod_{p \leq C} (\varphi(p^2) - 1)$ residues classes $a_1, \ldots, a_{\ell} \pmod{\mathcal{P}}$
such that if $p$ belongs to one of them then $q^2 \nmid p + A$ for all $q \leq C$. Therefore,
$$                                                                                                                                     
\mathbf{1}_{p_1 + p_2 + \ldots + p_{i-1} + p \in \mathbf{V}_X} \geq                                                                    
\sum_{i = 1}^{\ell} \mathbf{1}_{p \equiv a_{i} \pmod{\mathcal{P}}}                                                                     
- \sum_{C < q \leq w} \mathbf{1}_{p \equiv - A \pmod{q^2}}                                                                             
$$
Summing this with a weight of $1/p$ and using the prime number theorem in arithmetic progressions we conclude that
$$                                                                                                                                     
\sum_{p\in I: p_1 + \ldots + p_{i-1} + p \in \mathbf{V}_X} \frac{1}{p} \geq                                                           
\Big ( \prod_{p \leq C} \frac{\varphi(p^2) - 1}{\varphi(p^2)} - \frac{B}{C} \Big ) \sum_{p \in I} \frac{1}{p}                          
$$
for some absolute constant $B > 0$. Therefore if $C$ is choosen large enough then we obtain the desired lower bound.
\end{proof}

Let us first show that Proposition~\ref{longpath-5} follows once we have shown the three estimates
\begin{equation}\label{wsum}
\sum_{\gamma \in \Gamma} w_\gamma \P( \gamma \subset {\mathbf G}_X ) = 1 + o(1),
\end{equation}
\begin{equation}\label{meander}
 \sum_{\gamma,\gamma' \in \Gamma} w_\gamma w_{\gamma'} \P( \gamma, \gamma' \subset {\mathbf G}_X ) \leq 1 + o(1),
\end{equation}
and
\begin{equation}\label{meander-2}
 \sum_{\gamma,\gamma' \in \Gamma: \gamma(k)=\gamma'(k)} w_\gamma w_{\gamma'} \P( \gamma, \gamma' \subset {\mathbf G}_X ) \ll \log^{-100} X.
\end{equation}
Indeed, from \eqref{wsum}, \eqref{meander}, and Chebyshev's inequality we have
$$ \sum_{\gamma \in \Gamma} w_\gamma 1_{\gamma \subset {\mathbf G}_X} = 1+o(1)$$
with  probability $1-o(1)$, while \eqref{meander-2} and Markov's inequality gives
$$ \sum_{\gamma,\gamma' \in \Gamma: \gamma(k)=\gamma'(k)} w_\gamma w_{\gamma'} 1_{\gamma,\gamma' \subset {\mathbf G}_X} \ll \log^{-99} X $$
with probability $1-o(1)$. From Cauchy-Schwarz we have
\[
\begin{split}
\left(\sum_{\gamma \in \Gamma} w_\gamma 1_{\gamma \subset {\mathbf G}_X}\right)^2 &= \left(\sum_m \sum_{\gamma \in \Gamma \colon \gamma(k) = m} w_\gamma 1_{\gamma \subset {\mathbf G}_X}\right)^2 \\
&\leq |\{\gamma(k) \colon \gamma \in \Gamma, \gamma \subset {\mathbf G}_X\}| \cdot \sum_{\substack{\gamma, \gamma' \in \Gamma \\ \gamma(k) = \gamma'(k)}} w_\gamma w_{\gamma'} 1_{\gamma,\gamma' \subset {\mathbf G}_X}.
\end{split}
\]
Hence once we have shown the bounds \eqref{wsum}, \eqref{meander}, \eqref{meander-2}, we get
\[
|\{\gamma(k) \colon \gamma \in \Gamma, \gamma \subset {\mathbf G}_X\}| \gg (\log X)^{99}
\]
with probability $1-o(1)$ and Proposition \ref{longpath-5} follows.

We begin with \eqref{wsum}.  From the Chinese remainder theorem we have
$$ \P( \gamma \subset {\mathbf G}_X ) = \frac{1}{p_1 \dots p_k} $$
for a path $\gamma$ of the form \eqref{ppk}, since the $p_1,\dots,p_k$ were assumed to be distinct; thus the left-hand side of \eqref{wsum} becomes
\begin{equation}\label{sand}
\sum_{\gamma \in \Gamma} \frac{w_\gamma}{p_1 \dots p_k}.
\end{equation}
We can interpret this expression probabilistically as follows.  Consider a random path $0, p_1, p_1+p_2,\dots,p_1+\dots+p_k$, constructed iteratively by requiring that whenever $1 \leq i \leq k$ and $p_1,\dots,p_{i-1}$ have already been chosen, then $p_i \in I$ is chosen with probability
$$ \frac{1/p_i}{\sum_{p \in I: p_1+\dots+p_{i-1}+p \in {\mathbf V}_X} 1/p} $$
if $p_1+\dots+p_i \in {\mathbf V}_X$, and chosen with probability zero otherwise.  Then the quantity \eqref{sand} is nothing more than the probability that this random path actually lies in $\Gamma$.  This gives the upper bound for \eqref{wsum} automatically.  For the lower bound, observe that the only way the path $0,p_1,\dots,p_1+\dots+p_k$ could fail to lie in $\Gamma$ is if there is a collision $p_i=p_j$ for some $1 \leq i < j \leq k$.  But if $1 \leq i < j$, then after fixing $i,j$ and $p_1,\dots,p_{j-1}$ we see from Lemma \ref{chic} that the probability of the event $p_i = p_j$ occurring is $\ll \frac{1}{p_i \log\log X} \ll \exp( - \sqrt{\log X} )$, for a total failure probability of $\ll k^2 \exp( - \sqrt{\log X} ) = o(1)$.  This proves \eqref{wsum}.

Now we prove \eqref{meander}. By~\eqref{wsum}, it suffices to show that
$$
 \sum_{\gamma,\gamma' \in \Gamma: \P( \gamma,\gamma' \subset {\mathbf G}_X) > \P( \gamma \subset {\mathbf G}_X) \P( \gamma' \subset {\mathbf G}_X)
} w_\gamma w_{\gamma'} \P( \gamma, \gamma' \subset {\mathbf G}_X ) \leq o(1).$$
Let $\gamma = 0, p_1,\dots,p_1+\dots+p_k$ and $\gamma' = 0,p'_1,\dots,p'_1+\dotsb + p'_k$ be two paths in $\Gamma$.  The quantity
$\P( \gamma,\gamma' \subset {\mathbf G}_X)$ is usually equal to either $\P( \gamma \subset {\mathbf G}_X) \P( \gamma' \subset {\mathbf G}_X)$ or zero.  The only exceptions occur if we have at least one collision of the form $p_i = p'_j$ for some $1 \leq i,j \leq k$.  Furthermore, if such a collision occurs, the quantity $p_1+\dots+p_{i-1} - p'_1 - \dots - p'_{j-1}$ must be divisible by $p_i$.  If there are exactly $r$ collisions $p_{i_l} = p'_{j_l}$ for some pairs $(i_1,j_1),\dots,(i_r,j_r) \in \{1,\dots,k\}^2$, then we have
$$ \P( \gamma, \gamma' \subset {\mathbf G}_X ) \leq \frac{ p_{i_1} \dots p_{i_r} }{ p_1 \dots p_k p'_1 \dots p'_k }.$$
It thus suffices to show that
\begin{equation}\label{slo}
\begin{split}
& \sum_{r=1}^k \sum_{(i_1,j_1),\dots,(i_r,j_r)} \sum_{\gamma,\gamma' \in \Gamma} w_\gamma w_{\gamma'} \prod_{l=1}^r 1_{p_{i_l}=p'_{j_l}} 1_{p_{i_l}|p_1+\dots+p_{i_l-1} - p'_1 - \dots - p'_{j_l-1}} \\
&\quad \frac{ p_{i_1} \dots p_{i_r} }{ p_1 \dots p_k p'_1 \dots p'_k } = o(1),
\end{split}
\end{equation}
where the second sum is over distinct pairs $(i_1,j_1),\dots,(i_r,j_r)$ in $\{1,\dots,k\}^2$, with the $i_1,\dots,i_r$ and the $j_1,\dots,j_r$ distinct, with the ordering $i_1 < \dots < i_r$ (to avoid duplicates).

Consider first the contribution of the case where $i_l=l$ for $l=1,\dots,r$.
In this case we will omit the condition $1_{p_{i_l}|p_1+\dots+p_{i_l-1} - p'_1 - \dots - p'_{j_l-1}}$, which in principle gives a large reduction to the size of the expression in \eqref{slo}, but is difficult to analyse. We also drop
the condition that $\gamma, \gamma' \in \Gamma$ (thus allowing duplicates among $p_i$ and among $p_i'$). We can thus bound the left-hand side of this contribution to \eqref{slo} by
\begin{equation}\label{soar}
\sum_{r=1}^k \sum_{(1,j_1),\dots,(r,j_r)} \E \prod_{l=1}^r (p_{l} 1_{p_{l}=p'_{j_l}})
\end{equation}
the paths $\gamma = (0, p_1,\dots,p_1+\dots+p_k)$ and $\gamma' = (0,p'_1,\dots,p'_1+\dots+p'_k)$ are selected randomly as in the proof of \eqref{wsum}.  Observe that if one conditions $p'_1,\dots,p'_k$ and $p_1,\dots,p_{l-1}$ to be fixed, then $p_l 1_{p_l = p'_{j_l}}$ has conditional expectation
$$
\frac{1}{\sum_{p \in I: p_1+\dots+p_{i-1}+p \in {\mathbf V}_X} \frac{1}{p}} \ll \frac{1}{\log \log X}$$
by Lemma \ref{chic}.  Thus by multiplying together the conditional expectations, we can bound \eqref{soar} by
$$
\sum_{r=1}^k \sum_{(1,j_1),\dots,(r,j_r)} O\left( \frac{1}{\log\log X} \right)^r.$$
Because we have constrained $i_l=l$ for $l=1,\dots,r$, there are only $k^r$ choices for the $(i_1,j_1),\dots,(i_r,j_r)$, so we can bound the total contribution to \eqref{soar} or \eqref{slo} by
$$
\sum_{r=1}^k O\left( \frac{k}{\log\log X} \right)^r$$
which is (barely) of the form $o(1)$ thanks to \eqref{k-def}.

Now we consider the contribution of the case where $i_l \neq l$ for at least one $1 \leq l \leq r$; in particular, there exists $1 \leq l_0 \leq r$ such that $i_{l_0} > i_{l_0-1} + 1$ (with the convention that $i_0=0$).   We temporarily fix $r$, $(i_1,j_1),\dots,(i_r,j_r)$ and $l_0$ and consider the corresponding component of 
\begin{equation}\label{slo2}
\sum_{\gamma,\gamma' \in \Gamma} w_\gamma w_{\gamma'} \prod_{l=1}^r 1_{p_{i_l}=p'_{j_l}} 1_{p_{i_l}|p_1+\dots+p_{i_l-1} - p'_1 - \dots - p'_{j_l-1}} \frac{ p_{i_1} \dots p_{i_r} }{ p_1 \dots p_k p'_1 \dots p'_k }
\end{equation}
to \eqref{slo}.  Here we will keep only one of the conditions $1_{p_{i_l}|p_1+\dots+p_{i_l-1} - p'_1 - \dots - p'_{j_l-1}}$, and also estimate the $w_\gamma$ by \eqref{wg}, arriving at an upper bound of
\begin{equation}\label{gm}
 \ll \frac{\exp(O(k))}{(\log\log X)^{2k}} 
\sum_{p_1,\dots,p_k,p'_1,\dots,p'_k \in I} \frac{\prod_{l=1}^r (p_{i_l} 1_{p_{i_l}=p'_{j_l}})}{ p_1 \dots p_k p'_1 \dots p'_k }
1_{p_{i_{l_0}}| p_1 + \dots + p_{i_{l_0}-1} - p'_1 - \dots - p'_{j_{l_0}-1}}.
\end{equation}
We sum first over $p_{i_{l_0}-1}$, keeping all the other variables in $p_1,\dots,p_k,p'_1,\dots,p'_k$ fixed.  Then $p_{i_{l_0}-1}$ is constrained to a single residue class $a \hbox{ mod } p_{i_{l_0}}$. We can crudely bound
\[
\sum_{\substack{p_{i_{l_0}-1} \in I \\ p_{i_{l_0}-1} = a \hbox{ mod } p_{i_{l_0}} }} \frac{1}{p} \leq \sum_{\substack{n \in I \\ n = a \hbox{ mod } p_{i_{l_0}} }} \frac{1}{n} \ll \frac{\log X}{\exp(\sqrt{\log X})} \ll \exp( - (1+o(1)) \sqrt{\log X} )
\]
since $n, p_{i_{l_0}} \geq \exp(\sqrt{\log X})$ (we could have done slightly better using the Brun-Titchmarsh inequality, but this is not necessary here). 

The factor $\frac{\exp(O(k))}{(\log\log X)^{2k}}$ in \eqref{gm} is $\exp( o( \sqrt{\log X} ) )$ and so can be absorbed into the $o(1)$ error in the preceding estimate, arriving at an upper bound of
$$
\ll
\exp( - (1+o(1)) \sqrt{\log X} ) \sum_{p_1,\dots,p_{i_{l_0}-2}, p_{i_{l_0}},\dots,p_k,p'_1,\dots,p'_k \in I}
\frac{\prod_{l=1}^r (p_{i_l} 1_{p_{i_l}=p'_{j_l}})}{ p_1 \dots p_{i_{l_0}-2} p_{i_{l_0}} \dots p_k p'_1 \dots p'_k }.$$
Each of the variables $p_j$ or $p'_j$ that is not of the form $p_{i_l}$ or $p_{j_l}$ for some $l$ can be summed using 
\begin{equation}\label{as}
\sum_{p \in I} \frac{1}{p} \ll \log\log X,
\end{equation} 
and then noting that $O(\log\log X)^{2k} \ll \exp( o( \sqrt{\log X} ) )$, we arrive (after using the constraints $p_{i_l} = p'_{j_l}$ to collapse the sum) at
$$
\ll
\exp( - (1+o(1)) \sqrt{\log X} ) \sum_{p_{i_1},\dots,p_{i_r} \in I} \frac{1}{p_{i_1} \dots p_{i_r}},$$
and a further application of \eqref{as} then gives a total contribution of $\exp( - (1+o(1)) \sqrt{\log X} )$ for \eqref{gm}.  Finally, the total number of choices for $r$ and $(i_1,j_1), \dots, (i_r,j_r)$ may be crudely bounded by $k \times k^{2k}$, so we have a net bound of
$$ k \times k^{2k} \times \exp( - (1+o(1)) \sqrt{\log X} ) = o(1).$$
This concludes the proof of \eqref{meander}.

Finally, we prove \eqref{meander-2}.  We consider first the contribution of the case where there are no collisions, so that $\{p_1,\dots,p_k\}$ is disjoint from $\{p'_1,\dots,p'_k\}$.  In this case, we have
$$
\P( \gamma, \gamma' \subset {\mathbf G}_X )  = \frac{1}{p_1 \dots p_k p'_1 \dots p'_k} $$
and so this contribution to \eqref{meander-2} is bounded by
$$
 \sum_{\gamma,\gamma' \in \Gamma: \gamma(k)=\gamma'(k)} \frac{w_\gamma w_{\gamma'}}{p_1 \dots p_k p'_1 \dots p'_k}.$$
One can bound this by the probability that $\gamma(k) = \gamma'(k)$, where $\gamma, \gamma'$ are selected as in the proof of \eqref{wsum}.  But if we fix the variables $p'_1,\dots,p'_k,p_1,\dots,p_{k-1}$, then the constraint $\gamma(k) = \gamma'(k)$ is only satisfiable for a single value $p_k^0$ of $p_k$ in $I$, and so the probability here can be bounded using Lemma \ref{chic} by
$$ \ll \frac{1}{p_k^0 \log\log X} \ll \frac{1}{\exp( \sqrt{\log X} )}$$
which is acceptable.

Now we consider the contribution of the case where there is at least one collision.  By the computations used to prove \eqref{meander}, this contribution is bounded by
\begin{align*}
& \sum_{r=1}^k \sum_{(i_1,j_1),\dots,(i_r,j_r)} \sum_{\gamma,\gamma' \in \Gamma: \gamma(k) = \gamma'(k)} w_\gamma w_{\gamma'} \prod_{l=1}^r 1_{p_{i_l}=p'_{j_l}} 1_{p_{i_l}|p_1+\dots+p_{i_l-1} - p'_1 - \dots - p'_{j_l-1}} \\
&\quad \frac{ p_{i_1} \dots p_{i_r} }{ p_1 \dots p_k p'_1 \dots p'_k }.
\end{align*}
If one does not have $i_l=l$ for all $l=1,\dots,r$, then we can discard the $\gamma(k)=\gamma'(k)$ constraint and use the computations used to prove \eqref{meander} to obtain a bound of $\exp( - (1+o(1)) \sqrt{\log X} )$, which is acceptable.  Thus we may assume that $i_l=l$ for all $l=1,\dots,r$.  

Now suppose that $r<k$, so that $p_k$ is not one of the $p_{i_l}'$.  If we fix $p'_1,\dots,p'_k,p_1,\dots,p_{k-1}$, then as before the constraint $\gamma(k)=\gamma'(k)$ is satisfiable for only a single value of $p_k$.  Repeating the argument used to control \eqref{gm}, we see that the contribution of this case is also $\exp( - (1+o(1)) \sqrt{\log X} )$, which is acceptable. Thus we may assume that $r=k$.   The constraint $\gamma(k)=\gamma'(k)$ is now automatic, and we simplify this contribution to
\begin{equation}\label{sas}
\sum_{(1,j_1),\dots,(k,j_k)} \sum_{\gamma,\gamma' \in \Gamma} w_\gamma w_{\gamma'} \prod_{l=1}^k \frac{1_{p_l = p'_{j_l}} 1_{p_l | p_1 + \dots + p_{l-1} - p'_1 - \dots - p'_{j_l-1}}}{p_l}
\end{equation}
where the outer sum is over permutations $(j_1,\dots,j_k)$ of $(1,\dots,k)$.

Suppose that $j_l \neq l$ for some $1 \leq l \leq k$.  Let $l_0$ be the least such $l$, so that $j_l=l$ for $l < l_0$ and $j_{l_0} > l_0$.  Then the constraint
$$ p_{l_0} | p_1 + \dots + p_{l_0-1} - p'_1 - \dots - p'_{j_{l_0}-1}$$
simplifies (using $p_{l} = p'_{j_l}$) to
$$ p_{l_0} | p'_{l_0} + \dots + p'_{j_{l_0}-1} $$
which is a non-trivial constraint since $j_{l_0} > l_0$.  In particular, if we fix $p_1,\dots,p_{l_0-1},p_{l_0+1},\dots,p_k$, and hence all of the $p'_j$ except $p'_{j_{l_0}}$, we see that $p_{l_0}$ is constrained to be a prime factor of a number of size at most $k X^{1/200}$; since $p_{l_0}$ lies in $I$, we see that there are at most $O( \sqrt{\log X} )$ choices for $p_{l_0}$, and the total sum of $\frac{1}{p_{l_0}}$ across these choices is thus $O( \sqrt{\log X} \exp( - \sqrt{\log X} ) )$.  Using this and Lemma \ref{chic}, and discarding all the other constraints $1_{p_l | p_1 + \dots + p_{l-1} - p'_1 - \dots - p'_{j_l-1}}$, we bound the contribution of \eqref{sas} of a single such $(1,j_1),\dots,(k,j_k)$ (which determines $l_0$) as
$$
\exp( -(1+o(1)) \sqrt{\log X} ) \sum_{p_1,\dots,p_{l_0-1},p_{l_0+1},\dots,p_k \in I} \frac{1}{p_1 \dots p_{l_0-1} p_{l_0+1} \dots p_k} $$
which by \eqref{as} is bounded by $O(\log\log X)^k \exp( -(1+o(1)) \sqrt{\log X} ) \leq \exp( -(1+o(1)) \sqrt{\log X} ) $ which is acceptable.
 
The only remaining case occurs when $j_l=l$ for all $1 \leq l \leq k$, at which point $\gamma'$ is equal to $\gamma$, the constraints $p_l | p_1 + \dots + p_{l-1} - p'_1 - \dots - p'_{j_l-1}$ can be discarded, and the contribution to \eqref{sas} collapses to
$$ \sum_{\gamma \in \Gamma} w_\gamma^2 \frac{1}{p_1 \dots p_k}.$$
This can be bounded by the expectation of $w_\gamma$, where $\gamma$ is chosen as in the proof of \eqref{wsum}.  But from Lemma \ref{chic}, this expectation is at most $\exp( O(k) ) / (\log\log X)^k$, which from the choice \eqref{k-def} of $k$ is (barely) $O( \log^{-100} X )$, which is acceptable.  This proves \eqref{meander-2}, and tracing back all the preceding reductions we finally arrive at Theorem \ref{main-2}.

\section{A Vinogradov type result}

In this section we prove the following result of Vinogradov type which was needed in the proof of Lemma \ref{le:ABconnect}.

\begin{proposition} \label{prop:Vinogradov}  Let $X$ be large, let $I_1,I_2,I_3$ denote the intervals
$$ I_1 := (X,3X]; \quad I_2 := (5X, 7X]; \quad I_3 := (3X, 5X]$$
and let $m \in [-X,X]$ be odd.  Let $A$ be an integer.  Then there are $\asymp X^2 / (\log X)^3$ triples $(p_1,p_2,p_3)$ of primes with $p_1 \in I_1, p_2 \in I_2, p_3 \in I_3$ such that
$$ m = - p_1 + p_2 - p_3 $$
and such that $A - p_1, A - p_1 + p_2$ are not divisible by $p^2$ for any $p \leq w$.
\end{proposition}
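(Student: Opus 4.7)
This is a Vinogradov-type ternary prime counting result with prescribed dyadic intervals and local constraints at small primes $p \leq w = \log^5 X$. I would proceed by the Hardy--Littlewood circle method. First, I would encode the local constraints as residue restrictions modulo $M := \prod_{p \leq w} p^2$: let $\mathcal R \subset (\Z/M\Z)^3$ be the set of triples $(r_1, r_2, r_3)$ coprime to $M$ satisfying $-r_1 + r_2 - r_3 \equiv m \pmod M$, together with the exclusions $r_1 \not\equiv A \pmod{p^2}$ and $-r_1 + r_2 \not\equiv A \pmod{p^2}$ for all $p \leq w$. With $f_j^{\mathbf r}(\alpha) := \sum_{p \in I_j,\, p \equiv r_j\,(M)} e(p\alpha)$, the quantity we want to estimate is
\[
N(m,A) \;=\; \sum_{\mathbf r \in \mathcal R} \int_0^1 f_1^{\mathbf r}(-\alpha)\, f_2^{\mathbf r}(\alpha)\, f_3^{\mathbf r}(-\alpha)\, e(-m\alpha)\, d\alpha.
\]

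I would then carry out a standard major/minor arc decomposition of $[0,1]$ at height $Q := (\log X)^B$ for some large constant $B$. On the major arcs near $a/q$ with $(a,q)=1$ and $q \leq Q$, one uses standard prime exponential sum approximations (Siegel--Walfisz, or a Vaughan identity Type I/II decomposition) after further sub-decomposing by residue class modulo $\mathrm{lcm}(q, M)$; observe that $M$ has all prime factors at most $\log^5 X$, so the equidistribution of primes in residue classes modulo $M$ in intervals of length $X$ lies well within the sieve-theoretic range. This yields the expected main term of size $\mathfrak S(m,A) \cdot \mathfrak J(m) \asymp \mathfrak S(m,A) \cdot X^2/(\log X)^3$, where $\mathfrak J(m) \asymp X^2$ is the archimedean volume. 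On the minor arcs, Vinogradov's bound $\sup_{\alpha \in \mathfrak m}|f_j^{\mathbf r}(\alpha)| \ll X/(\log X)^C$ (for $C$ as large as desired relative to $B$) combined with the Parseval bound $\int_0^1 |f_j^{\mathbf r}|^2\, d\alpha \ll X/(\phi(M)\log X)$ on the remaining two factors yields, after summing over $\mathbf r \in \mathcal R$, a contribution of order $X^2/(\log X)^{C-3}$, negligible for $C$ large enough.

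The main obstacle is verifying that the singular series $\mathfrak S(m,A)$ is bounded above and below by absolute positive constants, uniformly in odd $m$, in $A$, and in $X$. It factors as an Euler product $\prod_p \chi_p$. For primes $p > w$ the standard calculation gives $\chi_p = 1 + O(1/p^2)$, so the tail converges. For odd primes $p \leq w$, the local factor at $p^2$ counts triples of units modulo $p^2$ satisfying the linear equation with at most two excluded residue classes; since each exclusion removes only a $1/\phi(p^2)$-fraction of units, the local factor stays bounded above and below. At $p = 2$, the oddness of $m$ is precisely what guarantees that the local equation $m \equiv -r_1+r_2-r_3 \pmod{2^k}$ admits odd solutions, so $\chi_2 > 0$ (the usual parity obstruction in Goldbach-type problems). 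Combining these gives $\mathfrak S(m,A) \asymp 1$, yielding the claimed lower bound; the matching upper bound $\ll X^2/(\log X)^3$ follows immediately from the prime number theorem.
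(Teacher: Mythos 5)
The approach breaks at the very first step: working modulo $M := \prod_{p \leq w} p^2$ is not viable because $M$ is far larger than $X$.  With $w = \log^5 X$ one has $\log M = 2\theta(w) \sim 2\log^5 X$, so $M \geq X^{c\log^4 X}$.  There are only $\asymp X/\log X$ primes in each $I_j$, while the number of reduced residues $\phi(M)$ is superpolynomially larger than $X$; so almost every class is empty and each nonempty class contains $O(1)$ primes.  Consequently the sums $f_j^{\mathbf r}$ are supported on $O(1)$ primes, and neither the major-arc asymptotic (which requires $\mathrm{lcm}(q,M) \ll (\log X)^{O(1)}$ for Siegel--Walfisz, or at worst $\ll X^{1/2-\epsilon}$ on average for Bombieri--Vinogradov) nor the minor-arc Parseval estimate can hold.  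The assertion that ``the equidistribution of primes in residue classes modulo $M$ in intervals of length $X$ lies well within the sieve-theoretic range'' because $M$ is smooth is simply false once $M > X$; smoothness of the modulus is of no help there.  Likewise, the claimed bound $\int_0^1 |f_j^{\mathbf r}|^2\,d\alpha \ll X/(\phi(M)\log X) = o(1)$ cannot be correct, since the left side equals the nonnegative integer $\#\{p \in I_j : p \equiv r_j \pmod M\}$.

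The missing idea is that the local conditions up to $w$ must not be carried inside the circle method.  The paper's argument first relaxes the squarefree constraint to primes $p \leq C$ for a large \emph{fixed} constant $C$, and then controls the excluded primes $C < p \leq w$ separately.  For the main term one applies a Vinogradov-type asymptotic with the fixed modulus $\mathcal P^2 = \prod_{p \leq C} p^2$ (which is $O_C(1)$, so entirely within the Siegel--Walfisz range); for the tail one bounds
\[
\sum_{C \leq p \leq w}\ \sum_{\substack{m = -p_1+p_2-p_3,\ p_j \in I_j \\ p^2 \mid A - p_1}} 1
\quad\text{and}\quad
\sum_{C \leq p \leq w}\ \sum_{\substack{m = -p_1+p_2-p_3,\ p_j \in I_j \\ p^2 \mid A - p_1 + p_2}} 1
\]
by an upper-bound sieve on the two-prime condition plus Brun--Titchmarsh on the congruence $p_1 \equiv A \pmod{p^2}$, yielding $O(X^2/(C\log^3 X))$.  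Choosing $C$ large makes the main term dominate.  Without this ``truncate, then sieve the tail'' step, your argument does not go through.  (Your concluding remark that the upper bound $\ll X^2/\log^3 X$ is immediate is fine; the difficulty is entirely in the lower bound.)
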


The proof of Proposition \ref{prop:Vinogradov} relies crucially on the following 
slight modification of Vinogradov's result on representing odd integers
as sums of three primes.

\begin{lemma} \label{lem:Vinogradov}
We have,  for $m$ odd and fixed square-free $k$, and any $a_1,a_2$,
\begin{align*}
\sum_{\substack{m = -p_1 + p_2 - p_3 \\ p_j \in I_j \\ p_1 \equiv a_1 \pmod{k^2} \\ p_2 \equiv a_2 \pmod{k^2}}} 1 
= \frac{\mathcal{G}(m)\mathfrak{S}(m)}{(\log X)^3} \cdot \mathbf{1}_{(k, -a_1 + a_2 - m) = (k,a_1) = (k,a_2) = 1} 
\cdot \frac{f((k, m)) g(k)}{k \varphi(k)^3} + o \Big ( \frac{X^2}{(\log X)^3} \Big ),
\end{align*}
where $\mathcal{G}(m) = \# \{ (n_1, n_2, n_3) \in I_1 \times I_2 \times I_3 : m = -n_1 + n_2 - n_3 \}$ and $f, g$
are multiplicative functions such that \begin{align*}
f(p^{\alpha}) = f(p) & = \Big ( 1 + \frac{1}{(p-1)^3} \Big ) \cdot \Big ( 1 - \frac{1}{(p-1)^2} \Big )^{-1} \\
g(p^{\alpha}) = g(p) & = \Big ( 1 + \frac{1}{(p-1)^3} \Big )^{-1}.
\end{align*}
Finally 
$$\mathfrak{S}(m) := \prod_{p|M} (1 - \frac{1}{(p-1)^2}) \times \prod_{p \not | M} (1 + \frac{1}{(p-1)^3})$$ 
is the usual singular series appearing in Vinogradov's three-primes
theorem. 
\end{lemma}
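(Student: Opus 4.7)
The lemma is a quantitative Vinogradov three-primes theorem in which two of the three primes are restricted to specified residue classes modulo $k^2$ (with $k$ fixed), and I would establish it by the Hardy--Littlewood circle method. Set $e(x) := e^{2\pi ix}$ and define
$$T_j(\alpha) := \sum_{\substack{p \in I_j\\p\equiv a_j\,(k^2)}} e(\alpha p)\ \ (j=1,2),\qquad T_3(\alpha) := \sum_{p \in I_3} e(\alpha p).$$
By orthogonality, the left-hand side equals $\int_0^1 T_1(-\alpha)\,T_2(\alpha)\,T_3(-\alpha)\,e(-\alpha m)\,d\alpha$. Partition $[0,1]$ into major arcs $\mathfrak M$ (a union of intervals of length $1/(qQ)$ around rationals $b/q$ with $(b,q)=1$ and $q \leq Q := (\log X)^B$ for a large constant $B$) and minor arcs $\mathfrak m$. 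On $\mathfrak m$, combine the Vinogradov/Vaughan bound $|T_3(\alpha)| \ll X/(\log X)^A$ (valid for any $A>0$ after choosing $B$ large) with the Parseval estimate $\int_0^1 |T_1(\alpha)T_2(\alpha)|\,d\alpha \ll X/(k\varphi(k))$ to obtain a minor-arc contribution of $o(X^2/(\log X)^3)$.

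On the major arcs, the Siegel--Walfisz theorem (applicable uniformly since $k$ is fixed) gives, for $\alpha = b/q + \beta$ in $\mathfrak M$, explicit asymptotics for each $T_j$ as the product of an arithmetic factor (a Gauss-sum / Ramanujan-sum-type expression, vanishing unless $(a_j,k)=1$ for $j=1,2$) and a smooth Fourier integral in $\beta$. Integrating in $\beta$ reassembles the geometric factor $\mathcal G(m)/(\log X)^3$, while summing the arithmetic factors over $b, q$ produces an Euler product $\prod_p \beta_p$. At primes $p \nmid k$, $\beta_p$ coincides with the classical Vinogradov local factor appearing in $\mathfrak S(m)$. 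At a prime $p \mid k$ with $p^v \| k$, the two congruence conditions force $p_1 \equiv a_1$, $p_2 \equiv a_2$, and hence $p_3 \equiv -a_1+a_2-m$ modulo $p^{2v}$, so the local factor reduces to a direct density count on $\Z/p^{2v}\Z$: it is nonzero iff all three residues are coprime to $p$, in which case it equals $1/(p^v \varphi(p^v)^3)$.

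Collecting the nonvanishing conditions over $p \mid k$ yields the indicator $\mathbf 1_{(k,a_1)=(k,a_2)=(k,-a_1+a_2-m)=1}$. Replacing, at each $p \mid k$, the standard local factor of $\mathfrak S(m)$ by this restricted one, the ratio equals $g(p)$ when $p \nmid m$ (cancelling the $1+1/(p-1)^3$ present in $\mathfrak S(m)$) and $f(p)g(p) = (1-1/(p-1)^2)^{-1}$ when $p \mid m$ (cancelling the ``wrong'' factor $1-1/(p-1)^2$ that $\mathfrak S(m)$ still contributes at such $p$); multiplicativity of $f$ and $g$ then packages these corrections as $f((k,m))\,g(k)/(k\varphi(k)^3)$, producing the main term exactly as stated. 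The main technical obstacle is precisely this local-density matching at primes dividing $(k,m)$: one must verify by explicit calculation on $\Z/p^{2v}\Z$ that the ratio of the restricted to the standard local factor comes out to $f(p)$ exactly, and that the normalisation $1/\varphi(k^2)^2 = 1/(k\varphi(k))^2$ from the two arithmetic-progression restrictions combines with the missing factor $\varphi(k)/k$ arising from the restricted local densities at $p\mid k$ to produce the asserted $1/(k\varphi(k)^3)$. The remaining analysis --- minor-arc cancellation, Siegel--Walfisz on the major arcs, and the Fourier integration yielding $\mathcal G(m)/(\log X)^3$ --- is entirely classical, and gives the error $o(X^2/(\log X)^3)$ from Siegel--Walfisz.
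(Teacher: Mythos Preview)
Your proposal is correct and follows the standard Hardy--Littlewood circle method route that underlies the result. The paper's own proof, however, does not redo any of this analysis: it simply observes that the statement is a very minor modification of Ayoub's generalization of Vinogradov's theorem (which already handles the congruence restrictions $p_1\equiv a_1$, $p_2\equiv a_2 \pmod{k^2}$), the only new wrinkle being the interval constraints $p_j\in I_j$, which are absorbed by the usual partial-summation/smoothing argument on the major arcs. So the two approaches are the same in substance --- both rest on the circle method --- but the paper defers entirely to the literature, whereas you have sketched the argument from scratch; your version is more self-contained, the paper's is more economical. One small slip in your write-up: the factor coming from the restricted local densities at $p\mid k$ that combines with $1/\varphi(k^2)^2 = 1/(k\varphi(k))^2$ to produce $1/(k\varphi(k)^3)$ is $k/\varphi(k)$, not $\varphi(k)/k$.
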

\begin{proof}
This follows from a very minor modification of a generalization of Vinogradov's result due to Ayoub \cite{Ayoub}
(we only need to handle the additional condition $p_j \in I_j, \forall j \leq 3$). 
\end{proof}

One can easily compute that ${\mathcal G}(m) \asymp X^2$ and ${\mathfrak S}(m), f((k,m)), g(k) \asymp 1$.  Thus we have a cruder version
\begin{equation}\label{ayoub-weak}
\sum_{\substack{m = -p_1 + p_2 - p_3 \\ p_j \in I_j \\ p_1 \equiv a_1 \pmod{k^2} \\ p_2 \equiv a_2 \pmod{k^2}}} 1 
\asymp \frac{X^2}{\log^3 X} \Big ( \frac{1}{k \varphi(k)^3} + o(1) \Big ) 
\end{equation}
of the above lemma, when $a_1,a_2, -a_1+a_2-m$ are all coprime to $k$.  This is the only consequence of the above lemma that we will need.

We are now ready to prove Proposition \ref{prop:Vinogradov}.
\begin{proof}[Proof of Proposition \ref{prop:Vinogradov}]
Note that the only properties of the integer $A$ which are relevant are its reductions modulo $p^2$ for $p \leq w$.  Thus, by the Chinese remainder theorem, we may assume that $0 \leq A < \prod_{p \leq w} p^2$.  We may rewrite the desired claim as
\begin{equation} \label{equ:equmain}
\sum_{\substack{m = - p_1 + p_2 - p_3 \\ p_j \in I_j}} \mu^2_w(A - p_1) \mu^2_w(A - p_1 + p_2) \asymp \frac{X^2}{\log^3 X}
\end{equation}
where $\mu^2_w(n)$ is the indicator function of integers not divisible by a $p^2 \leq w$, and $p_1,p_2,p_3$ are understood to be prime.

Notice that for any fixed $C$, we have
\begin{equation}
\label{equ:muw2decomp}
\mu_w^2(A - p_1) \mu_w^2(A - p_1 + p_2) \geq \mathbf{1}_{\substack{p^2 \nmid A - p_1 \\ p^2 \nmid A - p_1 + p_2 \\ \forall p \leq C}}
- \sum_{\substack{C \leq p \leq w \\ p^2 | A - p_1}} 1 - \sum_{\substack{C \leq p \leq w \\ p^2 | A - p_1 + p_2}} 1.  
\end{equation}
We view the last two terms on the right-hand side as contributing error terms to be upper bounded using sieves. The first error term contributes to the left hand side of~\eqref{equ:equmain}
\begin{equation} \label{equ:firsterror}
\begin{split}
& \sum_{C \leq p \leq w} \sum_{\substack{m = -p_1 + p_2 - p_3 \\ p_j \in I_j, \forall j \leq 3 \\ p^2 | A - p_1}} 1 \\
& \leq \sum_{C \leq p \leq w} \sum_{\substack{p_1 \equiv A \pmod{p^2} \\ p_1 \in I_1}} | \{ n \in I_2 \cap (m+p_1+I_3): n, n-m-p_1 \hbox{ prime}\}|.
\end{split}
\end{equation}
The requirement that $n,n-m-p_1$ are prime removes two residue classes mod $p$ for $p \leq X$ not dividing $m+p_1$, and one residue class mod $p$ for $p \leq X$ dividing $m+p_1$. A standard upper bound sieve (see e.g. \cite[Theorem 3.12]{hr}) then gives
\begin{align*}
| \{ n \in I_2 \cap (m+p_1+I_3): n, n-m-p_1 \hbox{ prime}\}| &\ll \frac{X}{\log^2 X} \prod_{p \leq X: p | m+p_1} (1 + \frac{1}{p} ) \\
&\ll \frac{X}{\log^2 X} \sum_{d|m+p_1} \frac{1}{d} \\
&\ll \frac{X}{\log^2 X} \sum_{d \ll \sqrt{X}: d|m+p_1} \frac{1}{d} 
\end{align*}
since $m+p_1 \ll X$ and we may pair $d$ with $\frac{m+p_1}{d}$.
Thus we may upper bound \eqref{equ:firsterror} by
$$ \ll \frac{X}{\log^2 X}  \sum_{C \leq p \leq w} \sum_{d \ll \sqrt{X}} \frac{1}{d} \sum_{\substack{p_1 \equiv A \pmod{p^2} \\ p_1 \in I_1: d|m+p_1}} 1.$$
Applying the Brun-Titchmarsh inequality, we may bound this by
$$ \ll \frac{X^2}{\log^3 X} \sum_{C \leq p \leq w} \sum_d \frac{1}{d \phi( [p^2, d] )} $$
where $[p^2,d]$ denotes the least common multiple of $p^2$ and $d$.
By Euler products the innermost sum is $O(1/p^2)$, and so this expression is $O\left( \frac{1}{C} \frac{X^2}{\log^3 X} \right)$.

Similarly, the second error term in~\eqref{equ:muw2decomp} contributes to the left hand side of~\eqref{equ:equmain}
\begin{equation} \label{equ:seconderror}
\begin{split}
& \sum_{C \leq p \leq w} \sum_{\substack{m = -p_1 + p_2 - p_3 \\ p_j \in I_j, \forall j \leq 3 \\ p^2 | A - p_1 + p_2}} 1 \\
& = \sum_{C \leq p \leq w} \sum_{\substack{p_3 \in I_3 \\ p^2 | A + m + p_3}} |\{ n \in I_1 \cap (I_2 - m - p_3): n, m+n+p_3 \hbox{ prime}\}|.
\end{split}
\end{equation}
As before, standard upper bound sieves give 
$$ |\{ n \in I_1 \cap (I_2 - m - p_3): n, m+n+p_3 \hbox{ prime}\}| \ll \frac{X}{\log^2 X} \sum_{d \ll \sqrt{X}: d | m+p_3} \frac{1}{d} $$
and an application of Brun-Titchmarsh as before shows that the second error term is also $O\left( \frac{1}{C} \frac{X^2}{\log^3 X} \right)$.

It remains to understand the contribution of the main term.  Observe that for each prime $p$, there are at least $(p-1)(p-2) > p^2 - 3p$ pairs of residue classes $a_1,a_2 \pmod{p}$ such that $a_1, a_2, -a_1+a_2-m$ are all coprime to $p$.  Thus, there are at least $p^4 - 3p^3$ pairs of residue classes $a_1, a_2 \pmod{p^2}$ such that $a_1,a_2, -a_1+a_2-m$ are coprime to $p$.  Of these pairs, there are at most $2p^2$ pairs such that one of $A-a_1$ or $A-a_1+a_2$ is divisible by $p^2$.  By the Chinese remainder theorem, setting ${\mathcal P} := \prod_{p \leq C} p$, we conclude that there are at least $\prod_{p \leq C} (p^4 - 3p^3 - 2p^2)$ residue classes $a_1, a_2 \pmod{{\mathcal P}^2}$ such that $a_1,a_2,-a_1+a_2-m$ are coprime to all primes $p \leq C$, and $A-a_1, A-a_1+a_2$ are not divisible by $p^2$ for any $p \leq C$.  For any such fixed tuple $(a_1, a_2)$ we apply \eqref{ayoub-weak} to conclude (for $X$ sufficiently large depending on $C$)  that
$$
\sum_{\substack{m = -p_1 + p_2 - p_3 \\ p_j \in I_j, \forall j \leq 3 \\ p_1 \equiv a_1 \pmod{\mathcal{P}^2} \\ p_2 \equiv a_2 \pmod{\mathcal{P}^2}}} 1 \gg \frac{1}{\mathcal{P} \varphi(\mathcal{P})^3} \cdot \frac{X^2}{\log^3 X}.
$$
Summing, we may thus lower bound the contribution of the main term (for $C$ large) by
\[
\begin{split}
\gg \frac{\prod_{p \leq C} (p^4 - 3p^3 - 2p^2)}{{\mathcal P} \varphi({\mathcal P})^3} \frac{X^2}{\log^3 X} &= \prod_{p \leq C} \left(\frac{p^4 - 3p^3 - 2p^2}{p(p-1)^3}\right) \frac{X^2}{\log^3 X} \\
&= \prod_{p \leq C} \left(\frac{(p-1)^3-5p+1}{(p-1)^3}\right) \frac{X^2}{\log^3 X} \gg 
\frac{X^2}{\log^3 X}
\end{split}
\]
with the implied constant uniform in $C$.
For $C$ large enough (and $X$ sufficiently large depending on $C$), this lower bound dominates the two error terms, and we obtain the claim.
\end{proof}

\end{document}